\colorlet{mylinkcolor}{violet}
\colorlet{mycitecolor}{YellowOrange}
\colorlet{myurlcolor}{Aquamarine}
\newtheorem{theorem}{Theorem}[section]
\newtheorem{lemma}[theorem]{Lemma}
\newcommand{\DEF}{\sl}
\newcommand{\N}{\mathbb{N}}
\newcommand{\R}{\mathbb{R}}
\newcommand{\drd}{\mathrm{drd}}
\renewcommand{\leq}{\leqslant}
\renewcommand{\geq}{\geqslant}
\newcommand{\fivefive}{pure degree-$5$}
\newcommand{\fivefour}{mixed degree-$5$}
\newcommand{\verts}[1]{|V(#1)|}
\newcommand{\edges}[1]{|E(#1)|}
\begin{document}

\title[$K_{4}$-Minor-Free Induced Subgraphs of Sparse Connected Graphs]{$K_{4}$-Minor-Free Induced Subgraphs\\ of Sparse Connected Graphs}

\author[G.~Joret]{Gwena\"{e}l Joret}
\address[G.~Joret]{D\'epartement d'Informatique \\
  Universit\'e Libre de Bruxelles\\
  Brussels\\
  Belgium}
\email{gjoret@ulb.ac.be}

\author[D.~R.~Wood]{David R. Wood}
\address[D.~R.~Wood]{School of Mathematical Sciences \\
  Monash University \\
  Melbourne\\
  Australia}

\email{david.wood@monash.edu}

\thanks{G.\ Joret was supported by a
DECRA Fellowship from the Australian Research Council during part of the project. D.\ R.\ Wood is supported by the Australian Research Council.}

\date{\today}

\sloppy

\maketitle

\begin{abstract}
We prove that every connected graph $G$ with $m$ edges contains a set $X$ of at most $\frac{3}{16}(m + 1)$ vertices such that $G-X$ has no $K_4$ minor, or equivalently, has treewidth at most $2$. This bound is best possible. 
Connectivity is essential: If $G$ is not connected then only a bound of $\frac{1}{5}m$ can be guaranteed.
\end{abstract}

\section{New Introduction}

We consider the minimum size of  a subset $X$ of vertices
to remove from a graph $G$ to obtain a graph with no $K_{4}$ minor, or equivalently, a graph of treewidth at most $2$. Denote this quantity by $s(G)$.
We are interested in bounding $s(G)$ from above by a function of the number $m$ of edges of $G$.
Several authors independently proved the following upper bound:
\begin{theorem}[Scott and Sorkin~\cite{Scott2003}; Kneis, M\"olle, Richter, and Rossmanith~\cite{Kneis-WG05}; Edwards and Farr~\cite{EF08}; Borradaile, Eppstein, and Zhu~\cite{Eppstein}]
\label{thm:m5}
For every graph $G$ with $m$ edges,
$$s(G) \leq \frac{1}{5}m.$$
\end{theorem}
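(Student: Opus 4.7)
The plan is to prove the bound by induction on the number of edges $m$. The base case is trivial: when $G$ has no $K_4$ minor (automatic for small $m$), $X=\emptyset$ gives $s(G)=0 \leq m/5$. For the inductive step, assume $G$ has a $K_4$ minor. The guiding inequality is that for any vertex $v$,
\[
s(G) \;\leq\; 1 + s(G - v) \;\leq\; 1 + \frac{m - \deg(v)}{5},
\]
which is at most $m/5$ precisely when $\deg(v) \geq 5$. Conversely, if $v$ is \emph{dispensable} in the sense that $s(G) \leq s(G - v)$, induction on $G - v$ closes with no requirement on $\deg(v)$, since then $s(G) \leq s(G-v) \leq (m-\deg(v))/5 \leq m/5$.

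The proof thus splits into two tasks. First, reduce to the case of minimum degree at least $5$. A vertex $v$ of degree at most $2$ is essentially dispensable: since $K_4$ is $3$-connected, each branch set of a $K_4$ model has at least three external edges, so one can argue (handling cut-vertex subtleties via the block decomposition, using that $K_4$ minors live inside blocks and $s$ is additive over blocks) that $s(G) = s(G - v)$. For vertices of degree $3$ or $4$, I would use local Y-$\Delta$-type reductions, replacing $v$ by a small edge gadget on $N(v)$ to obtain a graph $G'$ with $s(G) \leq s(G')$ and $|E(G')| \leq m$; induction then applies to $G'$ on fewer vertices. Second, once every vertex of $G$ has degree at least $5$, pick any vertex $v$ of a $K_4$ model, put it in $X$, and close the induction directly via the inequality above.

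The main obstacle is engineering the local reductions at degree-$3$ and degree-$4$ vertices so that $s$ does not grow while the edge count does not increase. In the degree-$3$ case the natural replacement is a (possibly partial) triangle on $N(v)$, and the trade-off between deleting three edges and potentially adding three requires a case analysis by the number of edges already present among $N(v)$. The degree-$4$ case is more delicate, because the obvious candidate replacement can introduce up to six new edges while only four are removed; one must exploit pre-existing adjacencies among $N(v)$ and subcase carefully on its adjacency pattern, using that at least one valid choice of gadget exists in each configuration. Once these reductions are certified, the induction closes cleanly and the $m/5$ bound follows.
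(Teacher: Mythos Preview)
Your plan has two gaps. The minor one: the claim $s(G)=s(G-v)$ for every degree-$2$ vertex $v$ is false (take $K_4$ with one edge subdivided by $v$; then $s(G)=1$ but $s(G-v)=0$). The correct reduction is to \emph{suppress} $v$, i.e.\ remove it and add the missing edge between its two neighbours, which does preserve $s$. The serious gap is the degree-$4$ case. You assert that ``at least one valid choice of gadget exists in each configuration'' but supply no gadget, and this is precisely where the difficulty sits. When $N(v)$ is a stable set you have removed four edges and may add at most four back, and no $4$-edge graph on four vertices obviously simulates the hub $v$ well enough to force $s(G)\leq s(G')$; when $N(v)$ already induces $K_4$ the component is $K_5$, where the only available $G'$ is $G-v$ with $s(G')=s(G)-1$, so this case must be detected and handled separately. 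Each intermediate adjacency pattern on $N(v)$ needs its own verified gadget. Your degree-$3$ Y--$\Delta$ reduction is fine, but the degree-$4$ step is the heart of the matter and you have not carried it out.

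The paper sidesteps all of this with a potential function. Set $\phi(0)=\phi(1)=\phi(2)=0$, $\phi(3)=\frac54$, $\phi(d)=\frac{d}{2}$ for $d\geq 4$, and prove $s(G)\leq \frac15\sum_{v}\phi(\deg(v))$ by induction on $|V(G)|$. After suppressing degree-$\leq 2$ vertices one simply removes a vertex $v$ of \emph{maximum} degree and puts it in the transversal. The choice $\phi(3)=\frac54$ (rather than $\frac32$) is exactly what makes this uniform: if $\deg(v)=3$ the total potential drops by $\frac54+3\cdot\frac54=5$; if $\deg(v)=4$ each neighbour's potential drops by at least $\phi(4)-\phi(3)=\frac34$, giving a total drop of at least $2+4\cdot\frac34=5$; and if $\deg(v)\geq 5$ the drop is at least $\frac{\deg(v)}{2}+\deg(v)\cdot\frac12\geq 5$. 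Since $s(G)\leq 1+s(G-v)$, the induction closes with no gadget constructions and no case analysis on $N(v)$.
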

The ratio $ \frac{1}{5}$ is best possible: If $G$ is the disjoint union of $k$ copies of $K_{5}$ then $G$ has $m=10k$ edges and $s(G)=2k=\frac{1}{5} m$.
While $s(G) \leq \frac{1}{5}m$ is tight for arbitrary graphs, this bound can be improved if we restrict our attention to {\em connected} graphs. The following is our main result. 

\begin{theorem}
\label{th-2-fvs-connected}
For every connected graph $G$ with $m$ edges,
$$s(G) \leq \frac{3}{16}(m + 1).$$
\end{theorem}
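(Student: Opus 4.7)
The plan is a proof by strong induction on $|V(G)|$; the base case $|V(G)| \leq 5$ is verified directly (the tightest small instance is $K_5$ with $s = 2 \leq 33/16$).

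For the inductive step, I would first apply whichever of the following standard local reductions is available, each producing a smaller connected graph to which the inductive hypothesis applies. \emph{(R1) Leaf:} if some vertex $v$ has $\deg(v) \leq 1$, then $s(G) = s(G-v)$ and $G-v$ is connected with at most $m-1$ edges, and induction finishes the case. \emph{(R2) Degree-two suppression:} if $v$ has degree $2$ with neighbors $u, w$, let $G''$ be obtained from $G$ by deleting $v$ and adding the edge $uw$ (if absent); since suppression of a degree-$2$ vertex preserves the existence of a $K_4$-minor, one checks directly that $s(G) \leq s(G'')$, and $G''$ is connected with $|V(G)| - 1$ vertices and at most $m - 1$ edges. \emph{(R3) High-degree non-cut vertex:} if a non-cut vertex $v$ has $\deg(v) \geq 6$, then $G - v$ is connected with at most $m - 6$ edges, and $s(G) \leq 1 + s(G - v) \leq 1 + \frac{3}{16}(m-5) \leq \frac{3}{16}(m+1)$.

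After exhausting (R1)--(R3) we have $\delta(G) \geq 3$ and every non-cut vertex of $G$ has degree at most $5$; since every connected graph on at least two vertices contains a non-cut vertex, $\delta(G) \in \{3, 4, 5\}$. A leaf-block reduction---discarding the non-cut-vertex vertices of any leaf block carrying no $K_4$-minor, an operation which preserves $s$ exactly---reduces further to the case where either $G$ is $2$-connected or every leaf block of $G$ carries a $K_4$-minor. The heart of the argument is then a case analysis centred at a minimum-degree vertex $v$ (with $\deg(v) \in \{3, 4, 5\}$), splitting on the structure of the induced subgraph on $N[v]$ (number of edges in $N(v)$, presence of triangles and $K_4$-subgraphs). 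For each configuration we would choose a set $X \ni v$ of at most $3$ vertices whose deletion kills the local $K_4$-minor and removes enough edges to keep the slope $\frac{3}{16}$ valid, and in favourable configurations (when some optimal solution of $G$ avoids $v$) we would instead use the tighter $s(G) \leq s(G - X)$.

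The principal obstacle will be the subcase $\delta(G) = 5$, which houses the extremal graph $K_6$: here $m = 15$, $s = 3$, and $\frac{3}{16}(m+1) = 3$ is attained exactly. Any reduction that covers $K_6$ must be \emph{integer-tight}: for instance, deleting a triangle inside $K_6$ loses $12$ edges at the cost of $3$ deletions, and the inductive bound $3 + \lfloor \frac{3}{16}\cdot 4 \rfloor = 3 + 0 = 3$ matches $\frac{3}{16}(m+1) = 3$ only because of integer rounding; the real inequality $3 \leq \frac{3}{16}\cdot 12$ fails. Extending this delicate trade-off to every $2$-connected graph with $\delta = 5$---and verifying that no $K_6$-like sub-configuration around the minimum-degree vertex eludes the case analysis---will be the technical crux of the argument.
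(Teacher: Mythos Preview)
Your outline correctly identifies the reductions that dispose of vertices of degree at most $2$ and at least $6$, and you are right that the $\delta(G)=5$ regime is where the proof lives or dies. But the proposal is not a proof: you explicitly leave the degree-$5$ case open (``will be the technical crux''), and the tools you list---local case analysis on $N[v]$ and deletion of a set $X$ of size at most $3$---are not enough to close it. Your own observation about integer-tightness at $K_6$ already shows why: removing $k$ vertices must recover $\tfrac{16}{3}k$ edges in the \emph{real} inequality, and no single local deletion around a degree-$5$ vertex achieves this uniformly. The integer-rounding escape you sketch for $K_6$ does not generalize, since the statement $s(G)\le\tfrac{3}{16}(m+1)$ is a real inequality and is attained with equality on an infinite family.

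The paper overcomes this with two ingredients your sketch lacks. First, it replaces raw edge-counting by a \emph{potential function} $\phi$ with $\phi(3)=\tfrac{4}{3}$ (rather than $\tfrac{3}{2}$), so that when a degree-$4$ neighbor of a deleted vertex drops to degree $3$ the contribution to the potential drop is $\tfrac{2}{3}$ rather than $\tfrac{1}{2}$; this extra $\tfrac{1}{6}$ per such neighbor is what makes the induction close already in the degree-$4$ case and is exploited repeatedly in the degree-$5$ analysis. Second---and this is the decisive idea---the paper proves a \emph{strictly stronger} bound $\phi(G)\ge\tfrac{16}{3}s(G)$ (no ``$-1$'') whenever $G$ is reduced but not $3$-connected, and then invokes this strengthening inductively: when a degree-$5$ vertex lies in a $3$-cutset, or when removing a carefully chosen stable set from an almost-induced even cycle of degree-$5$ vertices destroys $3$-connectivity, the extra unit of slack supplied by the strengthened hypothesis is exactly what is needed to finish. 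The paper remarks explicitly that the degree-$5$ case does not close without this two-part inductive statement. Your plan has no analogue of either device, and I do not see how a purely local edge-counting argument at a minimum-degree vertex can substitute for them.
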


This bound is best possible: Take $G$ to be the disjoint union of $k$ copies of $K_{6}$, and then add $k-1$ edges so that the resulting graph is connected. Then $s(G) = 3k$ and $m=16k-1$, and thus $s(G) = \frac{3}{16}(m + 1)$.

Edwards and Farr~\cite{EF12} previously studied $s(G)$ for connected graphs with maximum degree 6. 

\begin{theorem}[Edwards and Farr~\cite{EF12}]
\label{thm:EF}
For every connected graph $G$ with maximum degree at most $6$ and $n_{d}$ vertices of degree $d$,
$$s(G) \leq \frac14 n_{3} + \frac38 n_{4} + \frac{19}{40} n_{5} + \frac{11}{20} n_{6} + \frac{3}{20}.$$
\end{theorem}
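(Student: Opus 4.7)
The plan is to prove the inequality $s(G)\le F(G)$, where $F(G)=\tfrac14 n_3+\tfrac38 n_4+\tfrac{19}{40}n_5+\tfrac{11}{20}n_6+\tfrac{3}{20}$, by induction on $|V(G)|+|E(G)|$. The additive constant $\tfrac{3}{20}$ is a ``connectivity bonus'': if a reduction disconnects $G$ into $k$ pieces, the right-hand side implicitly grows by $(k-1)\cdot\tfrac{3}{20}$, which must be paid for by the local structure of the reduction. The base case handles graphs on at most $4$ vertices, which contain no $K_4$ minor, so $s(G)=0\le\tfrac{3}{20}$.

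For the inductive step I take a minimum counterexample $G$ and dispose of low-degree vertices first. A vertex $v$ of degree $0$ or $1$ lies in no minimal $K_4$-minor model, so deleting it preserves $s$, and $F$ does not increase since all coefficients $c_d$ are nondecreasing. A vertex of degree $2$ is suppressed (deleted, with its neighbors joined by an edge if not already adjacent); again $s$ is preserved, and the key check is that when $v$ is a cut vertex the connectivity bonus $\tfrac{3}{20}$ appearing on the second component of $G-v$ is absorbed by the degree drops $c_{d(u_i)}-c_{d(u_i)-1}$ at the two neighbors $u_i$ of $v$. (Chains of degree-$2$ vertices are suppressed iteratively so that both neighbors of the suppressed path end at branch points of degree $\ge 3$.)

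Once $G$ has minimum degree $3$ and is $2$-connected, the heart of the argument is a case analysis on a minimum-degree vertex $v$ and the degrees of its neighbors. For each configuration I construct a local reduction---put $v$ or a well-chosen neighbor into the hitting set $X$ and delete it, or perform a small contraction, or extract a $K_4$-subdivision rooted at $v$ and pay for it globally---and verify that $F$ drops by at least the size of the piece of $X$ produced. The coefficients $\tfrac14,\tfrac38,\tfrac{19}{40},\tfrac{11}{20}$ are exactly those for which the simplest deletions balance in the ``pure'' cases: for instance, removing a degree-$3$ vertex whose three neighbors all have degree exactly $3$ saves $c_3+3(c_3-c_2)=1$ in $F$, matching the $+1$ in $s$.

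The main obstacle will be configurations where the simplest deletion does not pay for itself---typically a low-degree vertex adjacent to higher-degree vertices, where the coefficient differences $c_{d(u)}-c_{d(u)-1}$ are small (e.g.\ $c_6-c_5=\tfrac{3}{40}$). Here I expect to choose a different reduction that exploits the presence of a small forbidden substructure (such as a $K_4$ minus one edge, or a prism) inside $N[v]$; the delicate bookkeeping in these mixed cases is where the unusual coefficients $\tfrac{19}{40}$ and $\tfrac{11}{20}$ arise. Tightness against the extremal complete graphs $K_6$ and $K_7$, for which one checks $F(K_i)=s(K_i)$, pins the constants down and forces each reduction to avoid ``overcharging'' these terminal configurations, which will be the most technical step.
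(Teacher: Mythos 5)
The statement you are asked to prove is not proved in this paper: it is Theorem~\ref{thm:EF}, cited from Edwards and Farr~\cite{EF12}, and the present paper uses it only as a point of comparison for its own result. So there is no ``paper's own proof'' to compare your attempt against; what can be assessed is whether your sketch is a viable route to the bound.

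Your general framework---induction on $|V(G)|+|E(G)|$, a linear weight functional $F$, and a case analysis on a minimum-degree vertex---is the standard shape for results of this type, and it is the same shape the paper itself uses for Theorem~\ref{prop-phi-2-fvs-connected}. The one computation you actually check, $c_3+3(c_3-c_2)=1$, is correct and does pin down $c_3=\tfrac14$ in the cubic case. However, the proposal never carries out the case analysis that constitutes the theorem: for every ``mixed'' configuration you write that you \emph{expect} to find a reduction that pays for itself, that the ``delicate bookkeeping'' is where the coefficients $\tfrac{19}{40}$ and $\tfrac{11}{20}$ come from, and that avoiding overcharging the extremal graphs ``will be the most technical step.'' These are pointers to where the argument must live, not arguments. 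In this problem essentially all the content is in those cases: the pure-degree cases determine the coefficients by a small linear system, but the theorem is true only because the many cross-degree configurations \emph{also} balance, and that is precisely what remains to be verified. As written the proposal establishes the framework and nothing beyond it.

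There is also a local error in the degree-$2$ step. You worry that suppressing a degree-$2$ cut vertex $v$ disconnects the graph and that the extra $\tfrac{3}{20}$ must be absorbed by the degree drops at the two neighbors of $v$. But the operation you describe (delete $v$, join its neighbors if not already adjacent) is edge contraction when the neighbors are non-adjacent, and contraction preserves connectivity; and a degree-$2$ vertex whose two neighbors \emph{are} already adjacent lies in a triangle and so cannot be a cut vertex. A degree-$2$ reduction therefore never disconnects a connected graph, and the absorption argument you invoke there is vacuous. The disconnection hazard that the $\tfrac{3}{20}$ term is meant to control genuinely arises only when a vertex of degree $\geq 3$ is placed in the transversal and deleted---which is exactly the part of the proof you have not written.
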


The best upper bound in terms of $m$ one could hope to prove from 
Theorem~\ref{thm:EF} is $$s(G) \leq  \frac{19}{100}m + \frac{3}{20}$$
(because of the 5-regular case). And with some work this is possible, even for graphs with unbounded degree. 
Nevertheless, Theorem~\ref{th-2-fvs-connected} improves upon this bound. 
Also note that for connected graphs with maximum degree 4, Theorem~\ref{thm:EF} implies $$s(G) \leq  \frac{3}{16}m + \frac{3}{20},$$ matching the coefficient in Theorem~\ref{th-2-fvs-connected}.  As is evident in our proof, the main difficulty in establishing the bound $\frac{3}{16}(m+1)$ in general is the delicate handling of vertices of degree $5$.


We continue this introduction with an overview of related works, emphasizing the different motivations for studying the invariant $s(G)$ that appeared in the literature.

\subsection*{Max-$2$-CSP}
In one line of research, bounding the invariant $s(G)$ for sparse graphs $G$ was originally motivated by the design of exponential-time algorithms for a class of combinatorial optimization problems called Max-$2$-CSP.
Informally, Max-$2$-CSP is the class of constraint satisfaction problems with at most two variables per clause
(see e.g.~\cite{SS07} for a precise definition).
This includes such problems as Max Cut, Max-$2$-Sat, Max Independent Set, and many others.
Problems in this class are naturally parameterized by the number $r$ of different values a variable can take;
for instance $r=2$ in the case of binary variables, which is the most common case.
Several exact exponential-time algorithms have been developed for Max-$2$-CSP.
When parameterized by the number $m$ of edges of the constraint graph, the best one to date is an $O^*(r^{\frac{13}{75}m + o(m)})$-time algorithm of Scott and Sorkin~\cite{SS07}.
(The notation $O^*(\cdot)$ omits polynomial factors in $m$ and $r$.)
This algorithm is based on treewidth, its key component being a proof that the treewidth of $G$ is at most  $\frac{13}{75}m + o(m)$.

Unfortunately, this exponential-time treewidth-based algorithm also takes exponential space.
This motivated a long line of research focusing on exponential-time but polynomial-space algorithms for Max-$2$-CSP~\cite{E13, EM14,  GS14, GS12, GK14, Kneis-WG05, Kneis-SIDMA09, SS07}.
It is in this context that the invariant $s(G)$ attracted some attention:
It is well known that a graph with no $K_4$ minor can be reduced to the empty graph by iteratively `eliminating' vertices of degree at most $2$ (see Lemma~\ref{lemma-2fvs} in Section~\ref{sec:preliminaries}).
As it turns out, these operations can also be realized on the constraint graph of a Max-$2$-CSP instance, in the sense that a vertex of degree at most $2$ can be eliminated in polynomial time, giving a smaller equivalent instance (see e.g.~\cite{SS07}).
The interest of this observation is that if $X\subseteq V(G)$ is such that $G-X$ has no $K_4$ minor, then one can simply branch on the vertices in $X$ and obtain an $O^*(r^{|X|})$-time algorithm that uses only polynomial space.
This observation motivated Scott and Sorkin~\cite{Scott2003} and independently Kneis {\it et al.}~\cite{Kneis-WG05}\footnote{It should be noted that a better bound than $s(G) \leq \frac{1}{5}m$  bound was claimed
by Kneis~{\it et al.} in their conference paper from 2005~\cite{Kneis-WG05}:  They claimed $s(G)\leq \frac{23}{120}m +1$ for every graph $G$ and  $s(G)\leq \frac{3}{16}m +1$ for every graph $G$ with maximum degree at most $4$. However, both proofs fail to address connectivity issues, and indeed the two statements are false, as seen by taking disjoint copies of $K_5$.
We note that other proofs in~\cite{Kneis-WG05} are problematic and were later corrected by the authors in their journal version~\cite{Kneis-SIDMA09} and the above two results were dropped (also see the Acknowledgements section of~\cite{Kneis-SIDMA09} as well as the discussion of~\cite{Kneis-WG05} by Scott and Sorkin~\cite[Section~1.3]{SS07}). 
However, the authors did not highlight the fact that the two results in~\cite{Kneis-WG05} were wrong, resulting in an unfortunate confusion in the literature.
}
to show that such a set $X$ with $|X| \leq \frac{m}{5}$ always exists (c.f.\ Theorem~\ref{thm:m5}), and that it can be found efficiently.

This approach was subsequently improved by Scott and Sorkin~\cite{SS07}.
Informally, they showed that it is enough to bound the `reduction depth' of the recursion tree.
This leads to the definition of the following graph invariant which is at most $s(G)$ 
(here we follow the terminology of Edwards~\cite{E13}).
First, let $r(G)$ denote the graph obtained from $G$ by eliminating vertices of degree at most $2$ (as in Lemma~\ref{lemma-2fvs}).
It is known that $r(G)$ is uniquely defined~\cite[Theorem~3.8]{Kneis-SIDMA09}.
Then the {\DEF delete-reduction depth $\drd(G)$} of $G$ is the least non-negative integer $k$ for which there is a sequence $G_1=r(G), G_2, \dots, G_{k+1}$ of graphs and a sequence $V_1, \dots, V_{k}$ of vertex sets such that
\begin{itemize}
\item $V_i \subseteq V(G_i)$ and $V_i$ contains at most one vertex from each component of $G_i$, for each $i\in \{1, \dots, k\}$; \\[-1.5ex]
\item $G_i = r(G_{i-1} - V_{i-1})$ for each $i\in \{2, \dots, k+1\}$; \\[-1.5ex]
\item $G_{k+1}$ is empty.
\end{itemize}
Note that if one also required $|V_i| = 1$ in the above definition, then the resulting invariant would be equal to $s(G)$ (we leave the proof to the reader). It follows that  $\drd(G) \leq s(G)$  for every graph $G$.

As noted by Edwards~\cite{E13}, it follows from the work of Scott and Sorkin~\cite{SS07} that Max-$2$-CSP can be solved in time $O^*(r^{k})$ and polynomial space given a sequence $V_{1}, \dots, V_{k}$ of vertex sets witnessing $\drd(G) \leq k$ as in the above definition.
Scott and Sorkin~\cite{SS07}  obtained a $O^*(r^{\frac{19}{100}m})$-time algorithm by showing that  $\drd(G) \leq \frac{19}{100}m + 2$ when $G$ has $m$ edges (with an algorithmic proof).

Note that if $G$ is not connected then, by definition, $\drd(G)$ is  the maximum of  $\drd(H)$ over all components $H$ of $G$.
Thus when bounding  $\drd(G)$, we may assume that $G$ is connected, and hence the upper bound of Scott and Sorkin on $\drd(G)$ can be viewed as a precursor of the bound $s(G) \leq \frac{19}{100}m + \frac{3}{20}$ of Edwards and Farr~\cite{EF12} for connected graphs $G$. 
As noted earlier, our upper bound of $\frac{3}{16}(m+1)$ on $s(G)$ when $G$ is connected is tight.
However, better upper bounds can be achieved for $\drd(G)$:
Edwards~\cite{E13} proved that $\drd(G) \leq \frac{9}{50}m + o(m)$.
Again, that proof is algorithmic, and yields an $O^*(r^{\frac{9}{50}m + o(m)})$-time, polynomial space, algorithm for Max-$2$-CSP, which is the best known to date.
Another algorithm matching this time complexity was described recently by Gaspers and Sorkin~\cite{GS14}.

\subsection*{Graph Drawing}
The invariant $s(G)$ is also among a set of closely related graph invariants studied by the graph drawing community. One general approach to draw a graph $G$ in the plane is to first identify an induced subgraph $H$ of $G$ that can be drawn nicely---e.g.\ a forest or a planar graph---and then add somehow the remaining vertices and edges to the drawing in a second phase. If the graph $G$ is sparse enough, one might hope to be able to embed a large fraction of the graph in the first phase, which motivates extremal questions of the following type: Given a class of graphs $\mathcal{C}$ that is considered to have nice drawings (say, planar graphs), what is the least integer $f(m)$ such that every $n$-vertex $m$-edge graph $G$ has an induced subgraph on at least $n - f(m)$ vertices in $\mathcal{C}$; or equivalently, has a set $X$ of vertices with $|X| \leq f(m)$ such that $G-X$ is in $\mathcal{C}$?

Borradaile, Eppstein, and Zhu~\cite{Eppstein} recently considered these questions with $\mathcal{C}$ being the class of (1) pseudo-forests (at most one cycle in each component); (2) $K_4$-minor-free graphs, and (3) planar graphs.
They proved the following upper bounds on the corresponding function $f(m)$: (1) $f(m) \leq \frac{2}{9}m$; (2) $f(m) \leq \frac{1}{5}m$ (c.f.\ Theorem~\ref{thm:m5}), and (3) $f(m) \leq \frac{23}{120}m$.
We note that their second result---i.e.\  $s(G) \leq \frac{1}{5}m$---was also observed implicitly in an earlier work by Edwards and Farr~\cite{EF08}.\footnote{Although that bound is not stated explicitly in~\cite{EF08}, it is a corollary of Lemma~3 in that paper.}

The result of Borradaile {\it et al.}~\cite{Eppstein} for planar graphs improves on an earlier bound of  Edwards and Farr~\cite{EF08} and is in fact slightly stronger than as stated above: It holds for the class $\mathcal{C}$ of planar graphs with treewidth at most $3$.
Nevertheless, this remains the best known bound for arbitrary planar graphs.
It was suggested by Rossmanith that an upper bound of the form $f(m) \leq \frac{1}{6}m + o(m)$ might hold for planar graphs  (see~\cite{Eppstein}).
Borradaile {\it et al.}~\cite{Eppstein} showed that, if true, this would be best possible: They proved that if $\mathcal{C}$ is any proper minor-closed class then $f(m) \geq \frac{1}{6}m - o(m)$.

Note that, as in the context of Max-$2$-CSP, here one is also naturally interested in computing efficiently the small sets $X$ such that $G-X$ is in  $\mathcal{C}$ guaranteed by these theorems, and indeed the results mentioned above come with efficient algorithms (see~\cite{Eppstein}). \\

This paper is organized as follows. We begin in Section~\ref{sec:preliminaries} with the necessary definitions and basic facts about the invariant $s(G)$. In Section~\ref{sec:tools}, we provide some general lemmas which are useful in our proofs, and in Section~\ref{sec-proof} we prove our main result (Theorem~\ref{th-2-fvs-connected}). 

\section{Definitions and Preliminaries}
\label{sec:preliminaries}
All graphs in this paper are finite, simple, and undirected.
Let $N_G(v)$ denote the set of neighbors of vertex $v$ in the graph $G$, and let $\deg_{G}(v) = |N_G(v)|$ denote its degree.
Let $G /uv$ denote the graph obtained from $G$ by contracting the edge $uv$.
(Since we restrict ourselves to simple graphs, loops and parallel edges resulting from edge contractions are deleted.)
A graph $H$ is a {\DEF minor} of a graph $G$ if $H$
can be obtained from a subgraph of $G$ by contracting edges.

A {\DEF $k$-cutset} of a connected graph $G$ is a subset $X \subseteq V(G)$ with $|X|=k$ such that $G-X$ is not connected.   
A {\DEF separation} of $G$ is a pair $(G_{1}, G_{2})$ of induced subgraphs of $G$ such that $V(G_{1})-V(G_{2})$ and $V(G_{2})-V(G_{1})$ are not empty and there are no edges between these two sets. 
The {\DEF order} of the separation  $(G_{1}, G_{2})$ is $|V(G_{1}) \cap V(G_{2})|$.

A {\DEF stable set} in a graph is a set of pairwise non-adjacent vertices. 
The maximum size of a stable set in a graph $G$ is denoted by $\alpha(G)$.
A graph $H$ is a {\DEF subdivision} of a graph $G$ if $H$ can be obtained from $G$ by replacing edges of $G$ with internally disjoint paths having the same endpoints. 
(The internal vertices of the paths are new vertices of the graph.)

Recall the basic observation that a graph $G$ contains $K_4$ as a minor if and only if $G$ contains a $K_4$ subdivision as a subgraph. Graphs with no $K_4$ minors can be characterized in different ways. For instance, these graphs are exactly the graphs of treewidth at most $2$.
The following characterization is useful for our purpose: $G$ has no $K_4$ minor if and only if $G$ can be reduced to the empty graph by iteratively applying the following three reductions (in any order):
\begin{itemize}
\item removing a vertex of degree at most $1$,
\item removing a vertex of degree $2$ lying in a triangle, and
\item contracting an edge incident to a vertex of degree $2$ not in a triangle.
\end{itemize}
Observe in particular that every graph with no $K_4$ minor has a vertex of degree at most $2$.

A subset $X$ of vertices of a graph $G$ such that $G-X$ has no $K_{4}$ minor
is called a {\DEF transversal}.
A {\DEF minimum transversal} of $G$ is a transversal of minimum size; its size
is denoted by $s(G)$.
The following lemma summarizes some simple but essential properties of the invariant $s(G)$, which  imply
the aforementioned characterization of graphs without $K_4$ minors. A proof is included for completeness.

\begin{lemma}
\label{lemma-2fvs}
Suppose that $H$ is a graph obtained from a graph $G$ using one of the following three operations:
\begin{itemize}
\item removing a vertex of degree at most $1$,
\item removing a vertex of degree $2$ lying in a triangle, and
\item contracting an edge incident to a degree-$2$ vertex not in a triangle.
\end{itemize}
Then $s(H) = s(G)$.
\end{lemma}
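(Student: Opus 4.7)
The plan is to prove $s(G) = s(H)$ for each of the three operations separately, all resting on the following structural fact about $K_4$-minors. In any graph $K$ with a vertex $x$ of degree at most $2$, one may eliminate $x$ without altering whether $K$ has a $K_4$-minor: namely, $K$ and $K - x$ have the same $K_4$-minor status when $\deg_K(x) \leq 1$, or when $\deg_K(x) = 2$ and $x$ lies in a triangle; whereas $K$ and $K/xy$ have the same status when $\deg_K(x) = 2$ with neighbors $y, z$ and $yz \notin E(K)$. I would establish this by working with a $K_4$-subdivision of $K$ (equivalent to a $K_4$-minor, as recalled in the paper): since $K_4$ has maximum degree $3$, any degree-at-most-$2$ vertex $x$ is either absent from the subdivision or appears as an internal vertex of a single path $P$, in which case a local surgery produces a $K_4$-subdivision in $K - x$ or $K/xy$ — re-routing the subpath $y$-$x$-$z$ through the triangle edge $yz$, or contracting it to an edge at the merged vertex $v'$. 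A short argument checks that the edge $yz$ is free in the subdivision (as otherwise one would obtain two paths between the same pair of branch vertices).

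For the two deletion operations the lemma follows directly from this fact. Any transversal $X$ of $H = G - v$ is also a transversal of $G$, since $v$ still has degree at most $1$ in $G - X$ (case $1$), or $v$ still lies in its triangle if both its neighbors remain (case $2$), so that the structural fact gives $G - X$ and $H - X$ the same $K_4$-minor status. Conversely, any minimum transversal $X$ of $G$ may be assumed to avoid $v$, for otherwise the same fact applied at $v$ to $G - (X \setminus \{v\})$ would show that $X \setminus \{v\}$ is already a transversal of $G$; then $H - X$ is a subgraph of $G - X$, hence $K_4$-minor-free, so $X$ serves as a transversal of $H$ of the same size.

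For the contraction operation the main obstacle is that $V(H)$ differs from $V(G)$: the vertices $u, v$ are merged into a new vertex $v'$. The bookkeeping relies on the correspondence that, for any $Z \subseteq V(G) \setminus \{u, v\}$, the graph $H - Z$ equals $(G - Z)/uv$ if $w \notin Z$, and $(G - Z) - u$ otherwise, and in both cases the structural fact applied at $u$ in $G - Z$ makes $G - Z$ and $H - Z$ share $K_4$-minor status. From here a small case analysis yields both inequalities. Given a transversal $Y$ of $H$, set $X = Y$ if $v' \notin Y$, and $X = (Y \setminus \{v'\}) \cup \{v\}$ otherwise (in the latter case $u$ becomes a pendant in $G - X$ whose removal recovers $H - Y$), giving a transversal of $G$ of the same size. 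Conversely, given a minimum transversal $X$ of $G$, the case $u, v \in X$ is excluded by minimality (then $X \setminus \{u\}$ would be strictly smaller via the structural fact applied at $u$ in $G - (X \setminus \{u\})$), and if exactly one of $u, v$ lies in $X$ we swap it for $v'$ to produce an equally sized transversal $Y$ of $H$, using the relation $H - Y \subseteq G - X$ to transfer $K_4$-minor-freeness. The deletion cases are routine once the structural fact is established; the technical effort is confined to the case analysis for contraction.
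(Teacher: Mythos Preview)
Your proposal is correct and follows essentially the same approach as the paper: both arguments rest on the observation that a vertex of degree at most $2$ in a graph cannot be a branch vertex of a $K_4$ subdivision, and if it appears as an internal vertex the subdivision can be rerouted (via the triangle edge) or shortened (via contraction). The only difference is organizational---you isolate this as a standalone ``structural fact'' and then apply it uniformly to transversals, whereas the paper weaves the subdivision argument directly into each of the three cases; the underlying case analysis on whether the special vertex lies in a given transversal is the same.
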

\begin{proof}
First suppose that $H = G - v$ where $v$ is a vertex of degree at most $1$ in $G$.
Clearly, $s(H) \leq s(G)$. Let $S$ be a minimum transversal of $H$. If $S$ is not a transversal of $G$, then
$G-S$ contains a $K_4$ subdivision, and that subgraph includes $v$. However this is not possible since
$v$ has degree at most $1$. Hence $S$ is a transversal of $G$, and $s(H) \geq s(G)$, implying $s(H) = s(G)$.

Next suppose that $H = G - v$ where $v$ is a vertex of degree $2$ in $G$ lying in a triangle.
Clearly, $s(H) \leq s(G)$ again. Let $S$ be a minimum transversal of $H$. If $S$ is not a transversal of $G$, then
$G-S$ contains a $K_4$ subdivision, and that subgraph includes $v$. However, one can shorten the
$K_4$ subdivision by removing $v$ and adding the edge between its two neighbors (which exists in $G$),
which is then a subgraph of $H-S$, a contradiction. Hence $S$ is a transversal of $G$, and $s(H) \geq s(G)$, implying $s(H) = s(G)$.

Finally, suppose that $H = G / uv$ where $v$ is a vertex of degree $2$ in $G$ not in a triangle, and let
$w$ denote the vertex resulting from the contraction of the edge $uv$.
To show $s(H) \leq s(G)$,  consider a minimum transversal $S$ of $G$. If $u \in S$ or $v \in S$,
then $(S - \{u,v\}) \cup \{w\}$ is a transversal of $H$ of size at most $|S| = s(G)$.
If, on the other hand, $u\notin S$ and $v \notin S$, then $H-S$ is a minor of $G-S$ (obtained by contracting the edge $uv$), and thus does not contain a $K_4$ minor. Hence $S$ is a transversal of $H$.

It remains to show $s(H) \geq s(G)$. Let $S$ be a minimum transversal of $H$. If $w \in S$, then
$T:=(S - w) \cup \{v\}$ is transversal of $G$, of size $|S| = s(G)$. Indeed, $v$ has degree at most $1$ in $G-T$ and thus is not contained in a $K_4$ subdivision in $G-T$, implying that every $K_4$ subdivision
in $G-T$ is also a subgraph of $H-S$.

If $w \notin S$, then we claim that $S$  itself is a transversal of $G$. For suppose not, and let $J$ be
subdivision of $K_4$ in $G -S$. Then $J$ includes $v$, since otherwise $H$ would exist in $J-S$
(replacing vertex $u$ by $w$). But then $v$ has degree $2$ in $J$, and its two neighbors are not adjacent,
implying that $J / uv$ is a $K_4$ subdivision in $H -S$, a contradiction.

Therefore, $s(H) \geq s(G)$, implying $s(H) = s(G)$.
\end{proof}

Note that in the statement of Lemma~\ref{lemma-2fvs} we could have merged the second and third operations simply by writing ``contracting an edge incident to a degree-$2$ vertex'' (since we only consider simple graphs).
We chose however to present it as above to emphasize the two possible situations for a vertex of degree $2$, which will typically be treated separately in the proofs. \\

As mentioned in the introduction, $s(G) \leq  \frac{1}{5}m$ 
for every graph $G$ with $m$ edges,
This can be proved easily using `potential functions'.
As these play an important role in this paper, we provide a quick sketch of the proof.
(This is also a good warm-up for the more technical proofs to come.)
The main idea is to show that there exists a function $\phi:\N \to \R$ (called a {\em potential function}) satisfying the following two properties:
\begin{itemize}
\item $\phi(d) \leq \frac{1}{2}d$ \quad for all $d \geq 0$, and \\[-2ex]
\item $s(G) \leq \frac15 \sum_{v\in V(G)} \phi(\deg(v))$ \quad for every graph $G$.
\end{itemize}
Given such a function $\phi$, the bound follows then by observing that
 $$s(G)  \leq \frac15 \sum_{v\in V(G)} \phi(\deg(v))  \leq \frac15 \sum_{v\in V(G)} \frac{ \deg(v)}{2} = \frac{1}{5}m.$$
One possible choice for the potential function $\phi$ is the following:
$$
\phi(d) := \left\{
\begin{array}{ll}
0 & \textrm{ if } d\in \{0,1,2\}; \\[1ex]
\frac54 & \textrm{ if } d = 3; \\[1ex]
\frac{1}{2}d & \textrm{ if } d \geq 4.
\end{array}
\right.
$$
It is clear that $\phi(d) \leq \frac{1}{2}d$ for all $d \geq 0$, and it thus remains to show that $s(G) \leq \frac15 \sum_{v\in V(G)} \phi(\deg(v))$ for every graph $G$, which we prove by induction on $|V(G)|$.

First, note that vertices of degree at most $2$ can be eliminated from $G$ without changing $s(G)$: If $v$ has degree at most $2$ then $s(G)=s(G')$, where $G'$ is obtained by first removing $v$ from $G$, and then adding an edge between the two neighbors of $v$ if they were not already adjacent in $G$ (see Lemma~\ref{lemma-2fvs}). Since the potential associated to each vertex of $G'$ is at most its potential
in $G$, we are done by induction in this case.
Thus we may assume that $G$ has no vertex of degree at most $2$.

Now consider a vertex $v$ of maximum degree.
If $\deg(v)=5$, then the potential $\phi(\deg(v))$ of $v$ is at least $\frac52$.
Furthermore, if we remove $v$ from $G$ then the potential of each neighbor of $v$ drops by at least $\frac12$ (since these neighbors have degree at least $3$). 
Thus, $$\sum_{v\in V(G)} \phi(\deg_G(v)) \geq \frac52 + 5 \frac12 + \sum_{v\in V(G')} \phi(\deg_{G'}(v)).$$
Since $s(G) \leq s(G-v) + 1$ the result follows by induction.

Now, if $\deg(v)=4$ then $v$ has potential $2$ and each of its neighbors has degree $3$ or $4$. Thus their potential drops by at least $\frac34$ when removing $v$.
(Here we see why it is useful to set $\phi(3):=\frac54$ instead of simply $\frac32$.)
Hence, the sum of potentials decreases by at least $2 + 4\frac34 = 5$ when removing $v$ from $G$, and we are done by induction as before.

Finally, if $v$ has degree $3$, then so do its three neighbors, and the drop
in potential when removing $v$ is $\frac54 + 3\frac54=5$. Once again, we are done by induction. 
This concludes the proof.

As can be seen in this proof, the main benefit of potential functions is that they allow a stronger bound for low-degree vertices, which then helps the induction go through.

\section{Tools}
\label{sec:tools}

In this section we introduce a few lemmas that are used in our main proof.

First we consider graphs $G$ that are edge critical with respect to the invariant $s(G)$:
An edge $e$ of a graph $G$ is said to be {\DEF critical} if $s(G - e) < s(G)$.
The graph $G$ is {\DEF critical} if all its edges are critical.

\begin{lemma}
\label{lem-include-exclude}
Let $G$ be a critical graph. Then, for every edge $uv \in E(G)$, there exists
$S \subseteq V(G) - \{u,v\}$ such that $S \cup \{u\}$ and $S \cup \{v\}$ are
both minimum transversals of $G$.
\end{lemma}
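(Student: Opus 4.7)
The plan is to exploit the definition of critical directly. Since every edge of $G$ is critical, in particular $s(G - uv) \leq s(G) - 1$. So I would start by picking a minimum transversal $T$ of $G - uv$, which satisfies $|T| \leq s(G) - 1$.

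The key observation is that $T$ cannot be a transversal of $G$ itself: if it were, then $s(G) \leq |T| \leq s(G) - 1$, a contradiction. On the other hand, $T$ is a transversal of $G - uv$, so $(G-uv) - T$ has no $K_4$ minor. The only way $G - T$ can differ from $(G - uv) - T$ is by the presence of the edge $uv$, which is there precisely when $\{u,v\} \cap T = \emptyset$. Therefore $T$ must avoid both $u$ and $v$, i.e., $T \subseteq V(G) - \{u,v\}$.

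Now I would set $S := T$ and check that $S \cup \{u\}$ and $S \cup \{v\}$ are both minimum transversals of $G$. For $S \cup \{u\}$: the graph $G - (S \cup \{u\}) = (G - uv) - S - u$ is a subgraph of $(G-uv) - S$, which has no $K_4$ minor, so no $K_4$ minor either; hence $S \cup \{u\}$ is a transversal of $G$. The same argument works for $S \cup \{v\}$. Both have size $|S| + 1 \leq s(G)$, and since any transversal of $G$ has size at least $s(G)$, equality must hold, so both are minimum transversals.

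There is no real obstacle here; the lemma is essentially an unpacking of the definition of edge criticality combined with the observation that removing a single edge can only fail to be a ``free'' operation when both endpoints lie outside the transversal. The only point that requires a touch of care is the direction of the inequality forcing $u, v \notin T$, which I would make explicit by the contradiction argument above.
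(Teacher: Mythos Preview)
Your proof is correct and follows essentially the same approach as the paper: take a minimum transversal $S$ of $G-uv$, observe that $u,v\notin S$ (else $S$ would already be a transversal of $G$), and then note that $S\cup\{u\}$ and $S\cup\{v\}$ are transversals of $G$ of size $s(G)$. The only cosmetic difference is that the paper records $|S|=s(G)-1$ immediately, whereas you carry the inequality $|S|\leq s(G)-1$ and recover equality at the end.
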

\begin{proof}
Let $S$ be a minimum transversal of $G - uv$. 
Then  $|S| = s(G - uv) = s(G) - 1$, since $uv$ is critical in $G$.
Hence, $u\notin S$ and $v\notin S$ (otherwise, $S$ would be a transversal of $G$).
The graph $G - (S\cup\{u\})$ is a subgraph of $(G-uv) - S$. Thus,
$S\cup\{u\}$ is a transversal of $G$, with size $|S|+1 = s(G)$.
By symmetry, the same is true for $S\cup\{v\}$.
\end{proof}

\begin{lemma}
\label{lem-cutvertex}
Let $G$ be a connected critical graph with $\verts{G} \geq 2$.
Then $G$ is $2$-connected.
\end{lemma}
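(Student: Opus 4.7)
The plan is to suppose for contradiction that $G$ has a cut vertex $v$, giving a separation $(G_1,G_2)$ of order $1$ with $V(G_1)\cap V(G_2)=\{v\}$. Since $G$ is connected, $v$ has a neighbour $u\in V(G_1)\setminus\{v\}$ and a neighbour $u'\in V(G_2)\setminus\{v\}$. The structural fact I will lean on throughout is that every $K_{4}$-subdivision is $2$-connected, and therefore every $K_{4}$-subdivision of $G$ lies entirely in $G_1$ or entirely in $G_2$; consequently $s(G)\leq s(G_1)+s(G_2)$, and the restriction of any transversal of $G$ to $V(G_i)$ is a transversal of $G_i$.

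First I would apply Lemma~\ref{lem-include-exclude} to the edge $uv$ to obtain a set $S\subseteq V(G)\setminus\{u,v\}$ of size $s(G)-1$ such that $S\cup\{u\}$ and $S\cup\{v\}$ are both minimum transversals of $G$. Writing $S_i:=S\cap V(G_i)$, I would observe that $S_1\cup\{v\}$ and $S_1\cup\{u\}$ are transversals of $G_1$, while $S_2$ and $S_2\cup\{v\}$ are transversals of $G_2$. Combining these with $|S_1|+|S_2|+1=s(G)$, the inequality chain
\[
s(G)=|S_1|+|S_2|+1\;\geq\;(s(G_1)-1)+s(G_2)+1\;=\;s(G_1)+s(G_2)\;\geq\;s(G)
\]
is forced to collapse to equalities, yielding $s(G)=s(G_1)+s(G_2)$, $|S_1|=s(G_1)-1$ and $|S_2|=s(G_2)$. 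In particular $T_1:=S_1\cup\{v\}$ is a minimum transversal of $G_1$ that \emph{contains} $v$.

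Running the symmetric argument starting from the edge $u'v$ produces, by the same bookkeeping, a minimum transversal $T_2$ of $G_2$ that also contains $v$. But then $T_1\cup T_2$ is a transversal of $G$ by the structural fact, and
\[
|T_1\cup T_2|=|T_1|+|T_2|-1=s(G_1)+s(G_2)-1=s(G)-1,
\]
contradicting the minimality of $s(G)$.

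The main point---and the step I expect to require the most care---is realising that Lemma~\ref{lem-include-exclude} must be invoked on \emph{both} sides of the cut, producing a minimum transversal of $G_1$ and a minimum transversal of $G_2$ that each contain $v$. Once this is in hand, the $2$-connectivity of $K_{4}$-subdivisions turns the sharing of $v$ into a saving of one vertex, which is precisely the desired contradiction. Everything else is routine bookkeeping with the decomposition $S=S_1\cup S_2$ and the telescoping inequality above.
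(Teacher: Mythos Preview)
Your proof is correct and follows essentially the same approach as the paper: apply Lemma~\ref{lem-include-exclude} to an edge on each side of the cut vertex and combine the resulting transversals to obtain one of size $s(G)-1$. Your bookkeeping is organized a bit differently---you first pin down $s(G)=s(G_1)+s(G_2)$ via a sandwich inequality, whereas the paper uses a swapping argument to equate $|S_1\cap V_1|$ and $|S_2\cap V_1|$---but the underlying idea is the same.
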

\begin{proof}
Arguing by contradiction, suppose that $G$ is not $2$-connected. Since $K_{2}$
is not critical, this implies $\verts{G} \geq 3$ and that $G$ has a cutvertex $u$.
Let $G_{1}$, $G_{2}$ be two subgraphs of $G$ such that $G = G_{1} \cup G_{2}$
with $\verts{G_{1}}, \verts{G_{2}} \geq 2$
and $V(G_{1}) \cap V(G_{2}) = \{u\}$.
Let $V_{i} := V(G_{i})$, and let
$v_{i}$ be an arbitrary neighbor of $u$ in $G_{i}$, for $i=1,2$.

Using Lemma~\ref{lem-include-exclude}, let $S_{i}$ ($i=1,2$) be a subset of $V(G) - \{u, v_{i}\}$
such that $S_{i} \cup \{u\}$ and $S_{i} \cup \{v_{i}\}$ are both minimum transversals of $G$.
Clearly, the set
$$
(S_{1} \cap V_{1}) \cup (S_{2} \cap V_{2}) \cup \{u\}
$$
is a transversal of $G$. Since its size is at least
$s(G)
= |S_{2} \cap V_{1}| + |S_{2} \cap V_{2}| + 1$, it follows
$|S_{1} \cap V_{1}| \geq |S_{2} \cap V_{1}|$.
Similarly, since
$$
(S_{2} \cap V_{1}) \cup (S_{1} \cap V_{2}) \cup \{u\}
$$
is also a transversal of $G$, we deduce
$|S_{2} \cap V_{1}| \geq |S_{1} \cap V_{1}|$, and thus
\begin{equation}
\label{eq-S1-S2}
|S_{1} \cap V_{1}| = |S_{2} \cap V_{1}|.
\end{equation}
If $S_2 \cap V_1$ is not a transversal of $G_1$, then there exists a $K_4$ subdivision
$J$ in $G_1 - S_2$. But then $J \subseteq G- (S_2 \cup \{v_2\})$, contradicting the fact that
$S_2 \cup \{v_2\}$ is a transversal of $G$. Thus $S_2 \cap V_1$ is a transversal of $G_1$.
By a symmetric argument, $S_1 \cap V_2$ is a transversal of $G_2$.
It follows that the set $T:=(S_{2} \cap V_{1}) \cup (S_{1} \cap V_{2})$ is
a transversal of $G$. (Note that a $K_4$ subdivision in $G$ cannot contain
both a vertex from $G_1 - u$ and another from $G_2 - u$.)
By~\eqref{eq-S1-S2}, 
$$
|T| = |S_{2} \cap V_{1}| + |S_{1} \cap V_{2}| =
|S_{1} \cap V_{1}| + |S_{1} \cap V_{2}| = s(G) -1,
$$
a contradiction. The lemma follows.
\end{proof}

Now we turn our attention to cycles of even length that are (almost) induced.
A cycle is said to be {\DEF even} or {\DEF odd} according to the parity of its length.
An even cycle $C$ of a graph $G$ is said to be {\DEF almost induced} if either $C$ has no chord, or
$C$ has exactly one chord $e$ and $e$ ``splits'' the cycle into two odd cycles.
Observe that, if $C$ is such a  cycle, then there is a subset $S \subset V(C)$ with
$|S|=\frac{|C|}{2}$ such that $S$ is a stable set of $G$.

\begin{lemma}
\label{lem-cycle}
Let $G$ be a graph with minimum degree at least $3$ and with no subgraph isomorphic to $K_4$.
Then $G$ contains an even cycle. Moreover, every shortest even cycle of $G$
is almost induced.
\end{lemma}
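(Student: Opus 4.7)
For the existence of an even cycle, I plan to use a standard longest-path argument. Let $P = v_0 v_1 \cdots v_k$ be a longest path in $G$. Since $P$ cannot be extended, all neighbors of $v_0$ lie on $P$, and since $\deg(v_0) \geq 3$ there exist three such neighbors $v_1, v_{i_2}, v_{i_3}$ with $2 \leq i_2 < i_3$. These three edges at $v_0$ determine three cycles through $v_0$ of lengths $i_2$, $i_3$, and $i_3 - i_2 + 2$, whose sum $2i_3 + 2$ is even. Hence at least one of the three cycles has even length.

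For the structural part, let $C$ be a shortest even cycle of $G$. I would first analyze the effect of a single chord $uv$ of $C$: it splits $C$ into two arcs of lengths $a, b \geq 2$ with $a + b = |C|$, yielding two sub-cycles of lengths $a+1$ and $b+1$. If $a, b$ are both odd, both sub-cycles are even and strictly shorter than $|C|$, contradicting minimality; if $a, b$ have opposite parities, then $|C| = a+b$ is odd, contradiction. So $a, b$ are both even, giving two odd sub-cycles, which matches the almost-induced condition when only one chord is present. Crucially, this shows that for every chord of $C$, both arcs between its endpoints have even length.

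It remains to rule out $C$ having two chords $e_1, e_2$. I would case-split on the arrangement of the four (or three) chord endpoints around $C$: (i) $e_1, e_2$ share an endpoint; (ii) they are disjoint and non-crossing; (iii) they are disjoint and cross. Using the even-arc constraint from the previous paragraph, in each case combining $e_1, e_2$ with suitable arcs of $C$ yields a closed walk which is in fact an even cycle. For (i) and (ii) this cycle is automatically strictly shorter than $|C|$ (since the ``leftover'' arcs have even length $\geq 2$), contradicting minimality. In case (iii), with arc lengths $a, b, c, d$ in cyclic order, the two ``inner'' cycles built from both chords have lengths $a+c+2$ and $b+d+2$, and one of them is strictly shorter than $|C|=a+b+c+d$ unless $a+c=2$ and $b+d=2$, which forces $a=b=c=d=1$ and $|C|=4$. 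In that boundary situation $C$ together with both chords contains all six edges among four vertices, hence contains $K_4$ as a subgraph, contradicting the hypothesis. I expect this boundary subcase of crossing chords, where the parity-based shortening fails and the ``no $K_4$ subgraph'' assumption is precisely what saves the argument, to be the main subtlety of the proof.
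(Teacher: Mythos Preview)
Your proposal is correct. The structural half follows the same strategy as the paper's proof: once a shortest even cycle $C$ is fixed, combine one or two chords with suitable arcs of $C$ to build a strictly shorter even cycle, invoking the no-$K_4$ hypothesis exactly once to kill the $|C|=4$ boundary case. The organization differs slightly---the paper disposes of $|C|=4$ upfront (so that $|C|\geq 6$ can be used in the crossing case via $|C_1|+|C_2|=|C|+4$), and it folds your case (i) into the non-crossing case---but the underlying computations are the same.

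Your existence argument, however, is genuinely different. The paper appeals to the well-known fact that a graph of minimum degree $3$ contains a $K_4$ subdivision, then takes two branch vertices $u,v$ of that subdivision and observes that among the three internally disjoint $u$--$v$ paths, two have equal parity. Your longest-path argument is more self-contained (it does not rely on the $K_4$-minor fact) and arguably cleaner; the paper's route has the mild advantage of tying the existence statement thematically to $K_4$. Either works.

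One small note: your stated cycle lengths $i_2$, $i_3$, $i_3 - i_2 + 2$ are off by one in the first two (they should be $i_2+1$ and $i_3+1$), but the sum is still even, so the parity conclusion is unaffected.
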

\begin{proof}
It is well known that every graph with minimum degree at least $3$ contains a $K_4$ minor, and thus
a subdivision of $K_{4}$.
Let $H$ be a subgraph of $G$ isomorphic to a subdivision of $K_{4}$.
Consider two distinct vertices $u,v$ of $H$ with degree $3$ in $H$. Since there are three
internally disjoint $uv$-paths
in $H$, two of them have the same parity. Thus the union of these two paths
is an even cycle in $G$.

Let $C$ be any shortest even cycle of $G$. It remains to show that $C$ is almost induced.
This is obviously true if $C$ is induced. If $C$ has exactly one chord $e$ in $G$, then
the two cycles obtained from $C$ using $e$ are odd, since otherwise there would be
a shorter even cycle in $G$. Thus $C$ is also almost induced in that case.

Enumerate the vertices of $C$ in order as $C=v_{1}v_{2}\dots v_{p}$, and
suppose $C$ has two distinct chords $e=v_{i}v_{j}$ ($i < j$) and
$f=v_{k}v_{\ell}$ ($k < \ell$). Then $|C| \geq 6$, since otherwise the vertices of $C$
would induce a subgraph isomorphic to $K_{4}$ in $G$.
By changing the cyclic ordering of $C$ if necessary, we may assume
without loss of generality $i\leq k < j$.
Also, exchanging $e$ and $f$ if necessary, we suppose that $\ell < j$ if $i=k$.
By the observation above, each chord splits $C$ into two odd cycles, that is,
$j - i + 1$ and $\ell - k +1$ are both odd. We distinguish two cases, depending
whether the chords are crossing or not.

First assume $\ell \leq j$, that is, the two chords are not crossing. Then the cycle
$v_{i}v_{i+1}\dots v_{k}v_{\ell}v_{\ell+1}\dots v_{j}$ has length
$(k-i +1) + (j - \ell +1) = (j - i) - (\ell - k) + 2$, which is even. Since the latter
cycle is shorter than $C$, this is a contradiction.

Now suppose $\ell > j$. Then, by assumption, we also have $k > i$.
Recall also that $j > k$, hence $v_{i}, v_{j}, v_{k}$ and $v_{\ell}$ are pairwise distinct,
and the two chords cross.
We can find two other cycles of $G$ using $C$ and these two edges, namely
$C_{1}=v_{i}v_{i+1}\dots v_{k}v_{\ell}v_{\ell - 1}\dots v_{j}$ and
$C_{2}=v_{j}v_{j-1}\dots v_{k}v_{\ell}v_{\ell + 1}\dots v_pv_1 \dots v_{i}$.
Note that $j-k$ and $k-i$ have the same parity since $j-i$ is even, and similarly
that $\ell -j$ and $j-k$ have the same parity since $\ell -k$ is even.
Thus $C_1$ is an even cycle. A symmetric argument shows that $C_2$ is even as well.
Since $|C_{1}| + |C_{2}| = |C| + 4$ and $|C| \geq 6$,
one of these cycles is shorter than $C$, again a contradiction.

It follows that $C$ is almost induced, as claimed.
\end{proof}

\section{Proof of Main Result}
\label{sec-proof}

In order to prove Theorem~\ref{th-2-fvs-connected}, we use the following
potential function: Let $\phi: \N \to \R_{+}$ be defined as
$$
\phi(d) := \left\{
\begin{array}{ll}
0 & \textrm{if } d \in \{0,1,2\}; \\[1ex]
\frac43 & \textrm{if } d=3; \\[1ex]
\frac12 d & \textrm{otherwise}.
\end{array}
\right.
$$
For a graph $G$, define the {\DEF potential} of $G$ as
$$
\phi(G):= \sum_{v\in V(G)} \phi(\deg(v)).
$$
We often simply write $\phi_{G}(u)$ to denote the potential of vertex $u$ in $G$, that is, $\phi\left(\deg_{G}(u)\right)$. 
Also, we use the convention that $\phi_{G}(v)=0$ whenever $v\notin V(G)$. 

Observe that $\phi(G) \leq m$ for every graph $G$ with $m$ edges,
since $\phi(d) \leq d /2$ for all $d \geq 0$. Thus
the bound $s(G) \leq \frac{3}{16}m + \frac{3}{16}$ when $G$ is connected
follows from the following technical theorem. Here  a graph $G$ is said to be {\DEF reduced} if $G$ is $2$-connected, critical, and $\deg(v) \in \{3,4,5\}$ for all $v\in V(G)$.

\begin{theorem} 
\label{prop-phi-2-fvs-connected}
For every connected graph $G$,
\begin{enumerate}[(a)]
\item \label{prop-phi-2-fvs-connected-1st-part}
$\phi(G) \geq \frac{16}{3} s(G) - 1$, and \\[-1.5ex]
\item \label{prop-phi-2-fvs-connected-2nd-part}
$\phi(G) \geq \frac{16}{3} s(G)$ if $G$ is reduced but {\em not} $3$-connected.
\end{enumerate}
\end{theorem}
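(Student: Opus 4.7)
The plan is to prove Theorem~\ref{prop-phi-2-fvs-connected} by simultaneous induction on $|V(G)|$, establishing (a) for all connected $G$ and (b) for reduced non-$3$-connected $G$, so that the slackless bound (b) is available to cancel the $-1$ penalty of (a) whenever the induction is applied across a $2$-cut. The proof proceeds through a chain of reductions bringing $G$ to a reduced graph, followed by structural arguments for the non-$3$-connected and $3$-connected reduced cases.

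The preliminary reductions are standard. If $\delta(G) \leq 2$, Lemma~\ref{lemma-2fvs} yields a connected $G'$ with $s(G') = s(G)$ and, since $\phi$ is non-decreasing in degree, $\phi(G') \leq \phi(G)$, so induction closes the case. If $G$ has a non-critical edge $e$ and $G - e$ is connected, we invoke (a) on $G - e$ directly; if $e$ is a bridge, each component of $G - e$ is smaller and connected, so (a) applied to each contributes a combined $-2$, which is recovered by the $\phi$-drop at the two endpoints of $e$ (each of degree $\geq 3$, so each dropping by at least $\tfrac{1}{2}$). Lemma~\ref{lem-cutvertex} then forces $G$ to be $2$-connected. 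Finally, if $G$ has a vertex $v$ of degree $d \geq 6$, then $G - v$ is connected and the $\phi$-drop upon removing $v$ is at least $\tfrac{d}{2} + d \cdot \tfrac{1}{2} = d \geq 6 > \tfrac{16}{3}$, easily paying for the unit increase in $s$ plus the $-1$ slack of (a). What remains is the case that $G$ is reduced.

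For part (b), given a $2$-cutset $\{a,b\}$ of a reduced $G$, split $G$ into induced subgraphs $G_1, G_2$ with $V(G_1) \cap V(G_2) = \{a,b\}$, possibly augmenting each with the edge $ab$ so that transversals glue and each piece remains critical. The induction hypothesis applied to each piece contributes a combined $-2$ to the right-hand side, and the plan is to recover this through the superadditivity of $\phi$ on $\{3,4,5\}$ at $a$ and $b$: since $\phi(d_1)+\phi(d_2)\leq \phi(d_1+d_2)$ with strict inequality in most configurations, the slack at the two cut vertices cancels the two $-1$'s, yielding the slackless bound (b). Residual configurations where the superadditive slack is deficient are handled by noting that one of the pieces must itself be $3$-connected (so part (a) applied there gives the tight bound with only one $-1$ entering the ledger), or by relocating the $2$-cut.

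The heart of the argument, and the main obstacle, is the $3$-connected reduced case of (a). If $G$ contains a $K_4$ subgraph $K$, removing a vertex of $K$ of maximum degree yields a $\phi$-drop of at least $\tfrac{16}{3}$ by exploiting the three $K$-neighbors whose degrees all drop. Otherwise Lemma~\ref{lem-cycle} provides a shortest even cycle $C$, almost induced, together with a stable set $S \subseteq V(C)$ of size $|C|/2$. I would select a deletion set $X$ of one or two vertices drawn from $V(C)$ and apply induction to $G - X$, verifying in each branching configuration that $\phi(G)-\phi(G-X)\geq \tfrac{16}{3}|X|$ (adjusting when $G - X$ is disconnected by using part (b) on the reduced non-$3$-connected pieces). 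The delicate point is degree-$5$ vertices: a single degree-$5$ vertex yields a $\phi$-drop of only $\tfrac{5}{2}+5\cdot\tfrac{1}{2}=5 < \tfrac{16}{3}$, so branching requires extra bookkeeping---either a neighbor of lower degree supplies the missing $\tfrac{1}{3}$ (through an extra $\tfrac{1}{6}$ per degree-$4$ neighbor or $\tfrac{5}{6}$ per degree-$3$ neighbor) or two vertices must be deleted jointly so that their shared neighborhood accumulates the deficit. Verifying this accounting across every configuration of degrees around the almost-induced cycle, especially when it is short and packed with degree-$5$ vertices, is the technical bottleneck.
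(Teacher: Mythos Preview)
Your overall framework---simultaneous induction, preliminary reductions to the reduced case, handling (b) across a $2$-cut, then (a) via an even cycle---matches the paper, and your preliminary reductions are essentially those of Lemma~\ref{lem:reduced}. The sketch for (b) is in the right spirit, though the actual case analysis is more involved than ``superadditivity plus relocating the cut'': the paper splits on whether the $2$-cutset induces an edge and on whether $s(G)=s(G_1)+s(G_2)$ or $s(G_1)+s(G_2)+1$, and in the latter case must show that adding the edge $uv$ to one side strictly raises its $s$-value.

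The genuine gap is the degree-$5$ case of part~(a). Your plan is to delete one or two vertices from a generic almost-induced even cycle and verify $\phi(G)-\phi(G-X)\geq \tfrac{16}{3}|X|$. But if $v$ has degree $5$ and all five neighbours also have degree $5$, removing $v$ gives a $\phi$-drop of exactly $\tfrac{5}{2}+5\cdot\tfrac{1}{2}=5$, with no lower-degree neighbour to supply the missing $\tfrac{1}{3}$; deleting two such vertices jointly does not repair this unless they share enough neighbours, which a generic even cycle does not arrange. The paper's route is substantially longer: it first proves that every degree-$5$ vertex has no degree-$3$ neighbour and at most one degree-$4$ neighbour (Lemma~\ref{lemma-no-neighbor-3-at-most-one-4}), then pins down the isomorphism type of $G_v$ for each degree-$5$ vertex $v$ (Lemmas~\ref{lemma-fivefive-no-stable-set-size-3}--\ref{lemma-5-4-two-graphs}), and uses this rigid local structure to manufacture an almost-induced even cycle $C$ consisting \emph{entirely} of degree-$5$ vertices (Lemma~\ref{lemma-cycle-deg-5}). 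It then removes the \emph{whole} stable half $S$ of $C$ (size $|C|/2$, not one or two vertices), tracking the largest prefix $S_j\subseteq S$ for which $G-S_j$ remains $3$-connected; when $3$-connectivity fails, it invokes part~(b) on the resulting \emph{connected} reduced non-$3$-connected graph $G-S_{j+1}$ to recover the missing $\tfrac{1}{6}$ of slack. This is the actual role of (b) inside the proof of (a)---it is applied to a connected graph that has just lost $3$-connectivity, not to components of a disconnected one as your sketch proposes.
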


The remainder of this section is devoted to the proof of Theorem~\ref{prop-phi-2-fvs-connected}. 
Arguing by contradiction, we let $G$ be a counter-example to Theorem~\ref{prop-phi-2-fvs-connected} with $\verts{G} + \edges{G}$ minimum. Our proof is in two steps: First, we prove in Section~\ref{sec-2-fvs-connected-part-b} that $G$ is not a counter-example to part (\ref{prop-phi-2-fvs-connected-2nd-part}) of Theorem~\ref{prop-phi-2-fvs-connected}. 
Then, building on this result, we show in Section~\ref{sec-2-fvs-connected-part-a} that $G$ is not a counter-example to part (\ref{prop-phi-2-fvs-connected-1st-part}) either, giving the desired contradiction. 

We begin with a useful lemma about the potential function $\phi$.

\begin{lemma}
\label{lemma-potential}
Let $a,b,c$ be integers with $a \geq 3$ and $b,c \geq 1$. Then
\begin{enumerate}[(A)]
\item \label{p-a-12} $\phi(a) \geq \phi(a-1) + \frac12$, \\[-1.5ex]
\item \label{p-a-1} $\phi(a) \geq \phi(a-2) + 1$, \\[-1.5ex]
\item \label{p-bc-12} $\phi(b + c -1) \geq \phi(b) + \phi(c) - \frac12$, and \\[-1.5ex]
\item \label{p-bc-1} $\phi(b + c) \geq \phi(b) + \phi(c) + 1$
if moreover $b, c \geq 2$ and $b + c \leq 5$.
\end{enumerate}
\end{lemma}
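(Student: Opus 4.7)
The lemma is a purely arithmetic statement about the explicit piecewise function $\phi$, so the plan is a routine case analysis broken down by where the arguments fall relative to the ``knot'' at $d = 3$. There is no clever idea to find; the work is in organising the cases so that only a handful of small values need to be verified by direct computation and the rest follow from the fact that $\phi(d) = d/2$ is linear on $d \geq 4$ (and also has $\phi(d) \geq d/2$ for $d=3$ since $4/3 > 3/2$ is false, so I need to be careful: actually $\phi(3) = 4/3 < 3/2$, so the function is concave at the knot). I will keep the parameter $\phi(3)=4/3$ in mind as the only ``small'' value that could obstruct any inequality.

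For (A), I will split on $a \in \{3,4,5\}$ and $a \geq 6$. For $a \geq 6$ the claim follows from $\phi(a) - \phi(a-1) = a/2 - (a-1)/2 = 1/2$. For $a \in \{3,4,5\}$, direct substitution gives $4/3 \geq 0 + 1/2$, $2 \geq 4/3 + 1/2 = 11/6$, and $5/2 \geq 2 + 1/2$. For (B), the same three boundary cases plus $a \geq 6$ suffice: $4/3 \geq 1$, $2 \geq 1$, $5/2 = 15/6 \geq 4/3 + 1 = 14/6$, and for $a \geq 6$ we again use linearity.

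For (C), I will assume $b \leq c$ without loss of generality. When $b = 1$ the inequality collapses to $\phi(c) + 1/2 \geq \phi(c)$. When $b = 2$ I only need $\phi(c+1) \geq \phi(c) - 1/2$, which holds because $\phi$ is non-decreasing on $\N$ (this can be read off the definition). When $b \geq 3$, then $c \geq 3$ and $b + c - 1 \geq 5$, so $\phi(b+c-1) = (b+c-1)/2$; the inequality becomes $(b+c)/2 \geq \phi(b) + \phi(c)$, which is an equality when $b, c \geq 4$ and otherwise follows from $\phi(3) = 4/3 \leq 3/2$. The only sub-case needing a number is $b = c = 3$: $\phi(5) = 5/2 \geq 8/3 - 1/2 = 13/6$.

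For (D), the constraint $b, c \geq 2$ and $b + c \leq 5$ leaves only the pairs $\{2,2\}$ and $\{2,3\}$ (up to swap), and I will just compute: $\phi(4) = 2 \geq 0 + 0 + 1$ and $\phi(5) = 5/2 \geq 0 + 4/3 + 1 = 7/3$. The main (mild) obstacle, if any, is organising (C) cleanly so that the weak case $b+c-1 = 4$ and the ``kink'' cases $b=3$ or $c=3$ are all visibly covered; beyond that it is a bookkeeping exercise.
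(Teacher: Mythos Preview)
Your proof is correct and follows essentially the same approach as the paper: a direct case analysis on the piecewise definition of $\phi$. The paper is terser and, for part~(C), splits on whether $b+c \leq 4$ or $b+c \geq 5$ (using $\phi(d) \leq d/2$ for all $d$ in the latter case) rather than on the value of $\min\{b,c\}$, but this is a cosmetic difference.
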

\begin{proof}
\eqref{p-a-12} and \eqref{p-a-1} are direct consequences of the definition of $\phi$.
\eqref{p-bc-12} is easily checked when $b+c \leq 4$, and for the case $b+c \geq 5$
it follows from the observation that
$$
\phi(b) + \phi(c) - \frac12 \leq \frac{b}{2} + \frac{c}{2} - \frac12 = \phi(b + c -1).
$$
\eqref{p-bc-1} is also straightforward to check.
\end{proof}

\subsection{$G$ Satisfies Part~(\ref{prop-phi-2-fvs-connected-2nd-part}) of Theorem~\ref{prop-phi-2-fvs-connected}}
\label{sec-2-fvs-connected-part-b}

Recall that $G$ is a counter-example to Theorem~\ref{prop-phi-2-fvs-connected} with $\verts{G} + \edges{G}$ minimum. 
In this section we show that $G$ is not a counter-example to part (\ref{prop-phi-2-fvs-connected-2nd-part}) of this theorem.  
This will naturally lead us to consider $2$-cutsets of $G$ in the subsequent proofs. 
Note that given a $2$-cutset $X$, every separation $(G_{1}, G_{2})$ of $G$ with
$V(G_{1}) \cap V(G_{2}) = X$ is such that $G_{1}$ and $G_{2}$ are both connected.

\begin{lemma}
\label{lemma-not-counter-example-b-case-1} 
If $G$ is a counter-example to Theorem~\ref{prop-phi-2-fvs-connected} (\ref{prop-phi-2-fvs-connected-2nd-part}), then every $2$-cutset of $G$ induces an edge.
\end{lemma}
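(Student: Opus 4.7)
Suppose for contradiction that $G$ has a $2$-cutset $\{x,y\}$ with $xy \notin E(G)$, and fix a separation $(G_1, G_2)$ of $G$ with $V(G_1) \cap V(G_2) = \{x,y\}$. Both $G_1$ and $G_2$ are connected and strictly smaller than $G$. Because $G$ is $2$-connected and $x,y$ are non-adjacent, each of $x$ and $y$ has at least one neighbour on each side; write $a_i := \deg_{G_i}(x) \ge 1$ and $b_i := \deg_{G_i}(y) \ge 1$. Since $G$ is reduced, $a_1+a_2 = \deg_G(x) \in \{3,4,5\}$ and $b_1+b_2 = \deg_G(y) \in \{3,4,5\}$.

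The plan is to work with the augmented graphs $G_i^\ast := G_i + xy$. Each $G_i^\ast$ is connected and $\verts{G_i^\ast}+\edges{G_i^\ast} < \verts{G}+\edges{G}$, so the minimality of $G$ together with part~(\ref{prop-phi-2-fvs-connected-1st-part}) of the induction hypothesis yields
$$\phi(G_i^\ast) \ge \tfrac{16}{3}\, s(G_i^\ast) - 1 \qquad (i=1,2).$$
Since the class of $K_4$-minor-free graphs is closed under $2$-sums along an edge, the union of minimum transversals $S_1, S_2$ of $G_1^\ast, G_2^\ast$ is a transversal of $G$ (any $K_4$-subdivision in $G - (S_1 \cup S_2)$ is $2$-connected, so it cannot straddle the cut $\{x,y\}$), giving $s(G) \le s(G_1^\ast) + s(G_2^\ast)$. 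Combining yields
$$\phi(G_1^\ast) + \phi(G_2^\ast) \ge \tfrac{16}{3}\, s(G) - 2.$$

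The potentials agree outside $\{x,y\}$, and a finite check over splits with entries $\ge 1$ and sum in $\{3,4,5\}$ gives
$$\phi(G) - \phi(G_1^\ast) - \phi(G_2^\ast) \;=\; \Delta_x + \Delta_y \;\in\; \Bigl[-\tfrac{5}{3},\, 0\Bigr],$$
where $\Delta_x := \phi(\deg_G(x)) - \phi(a_1+1) - \phi(a_2+1)$ (and analogously $\Delta_y$); the extreme value $-\tfrac{5}{6}$ of each $\Delta$ occurs precisely when $\deg_G = 5$ with split $(2,3)$. Hence
$$\phi(G) \;\ge\; \tfrac{16}{3}\, s(G) - 2 + \Delta_x + \Delta_y.$$

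The main obstacle is that $-2+\Delta_x+\Delta_y$ can be as negative as $-\tfrac{11}{3}$, so the naive argument misses the target $\phi(G) \ge \tfrac{16}{3}\, s(G)$. The plan to recover this slack is to use the criticality of $G$ via Lemma~\ref{lem-include-exclude}: pick $u \in \{x,y\}$ and an edge $uv \in E(G_i)$ and try to exhibit minimum transversals $S_i$ of $G_i^\ast$ that both contain $u$, so that $|S_1 \cup S_2| \le |S_1|+|S_2|-1$ and the transversal bound sharpens to $s(G) \le s(G_1^\ast)+s(G_2^\ast)-1$; each saved unit contributes a further $\tfrac{16}{3}$ to the lower bound on $\phi(G)$, comfortably absorbing the deficit of at most $\tfrac{11}{3}$. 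Where the construction fails it should force favourable structure—either $\Delta_x = \Delta_y = 0$, or that one of the $G_i^\ast$ is itself reduced and not $3$-connected so that part~(\ref{prop-phi-2-fvs-connected-2nd-part}) of the induction hypothesis applies to it and recovers the missing $+1$. Either route yields $\phi(G) \ge \tfrac{16}{3}\, s(G)$, contradicting the assumption that $G$ is a counter-example to part~(\ref{prop-phi-2-fvs-connected-2nd-part}), so every $2$-cutset of $G$ must induce an edge.
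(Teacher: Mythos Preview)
Your argument is not a proof but a plan, and the plan has real gaps. Everything up to the inequality
\[
\phi(G) \;\ge\; \tfrac{16}{3}\, s(G) - 2 + \Delta_x + \Delta_y
\]
is fine, and you correctly identify that the deficit can be as large as $\tfrac{11}{3}$. But the final paragraph never actually recovers that deficit. Two concrete problems:
\begin{itemize}
\item Lemma~\ref{lem-include-exclude} applied to an edge of $G$ produces minimum transversals \emph{of $G$} containing a prescribed endpoint. You need minimum transversals of $G_1^\ast$ and $G_2^\ast$ that share a vertex. There is no direct mechanism linking criticality of $G$ to the structure of minimum transversals of the augmented pieces $G_i^\ast$, so the sentence ``try to exhibit minimum transversals $S_i$ of $G_i^\ast$ that both contain $u$'' is wishful.
\item The fallback ``one of the $G_i^\ast$ is itself reduced and not $3$-connected'' is not established and can fail outright: if $a_i=1$ then $x$ has degree $2$ in $G_i^\ast$, so $G_i^\ast$ is not reduced and part~(\ref{prop-phi-2-fvs-connected-2nd-part}) does not apply to it. You give no argument ruling this out or handling it.
\end{itemize}

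The paper avoids the deficit altogether by \emph{not} adding $xy$ on both sides. It first normalizes the cut so that at most one of $x,y$ has degree~$1$ in each $G_i$, then splits on whether $s(G)\le s(G_1)+s(G_2)$ or $s(G)=s(G_1)+s(G_2)+1$. In the first case it works with the bare $G_i$ (deleting a pendant cut vertex if present) and shows $\phi(G)\ge\phi(G_1')+\phi(G_2')+2$ via Lemma~\ref{lemma-potential}. In the second case it adds $xy$ to only \emph{one} side (the one where this strictly increases $s$), uses $s(G_1-u)=s(G_1)$ on the other side, and again squeezes out a $+2$ in potential. Either way the books balance exactly; there is no slack to chase.
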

\begin{proof}
Arguing by contradiction, assume that there exists a $2$-cutset
of $G$ that does not induce an edge.
Let $X=\{u,v\}$ be such a cutset and let $(G_{1}, G_{2})$ be a separation of $G$ with
$V(G_{1}) \cap V(G_{2}) = X$.
Since $G$ has minimum degree at least $3$, we have $\verts{G_{1}}, \verts{G_{2}} \geq 4$.
We may assume that $X$ and $(G_{1}, G_{2})$ have been chosen so that
\begin{equation}
\label{eq-no-two-deg-1}
\textrm{at most one vertex in $X$ has degree 1 in $G_{i}$, for each $i\in \{1,2\}$}.
\end{equation}
Indeed, suppose this is not the case. Then without loss of generality
both $u$ and $v$ have degree $1$ in $G_{1}$.
Let $u'$ and $v'$ be the neighbors of respectively $u$ and $v$ in $G_{1}$.
If $u' = v'$, then $u'$ would be a cutvertex of $G$
(recall that $\verts{G_{1}} \geq 4$); thus, $u' \neq v'$.
It follows that $X':=\{u', v\}$ is a cutset of $G$
such that $u'v \notin E(G)$.
Also, the two graphs $G'_{1} := G[V(G_{1}) - \{u\}]$
and $G'_{2} := G[V(G_{2}) \cup \{u'\}]$ define a corresponding separation $(G'_{1}, G'_{2})$ of $G$
with $V(G'_{1}) \cap V(G'_{2}) = X'$.
Moreover, $u'$ and $v$ have degree at least $2$ in $G'_{1}$ and $G'_{2}$, respectively.
Hence, replacing $X$ with $X'$ and $(G_{1}, G_{2})$ with $(G'_{1}, G'_{2})$,
we see that~\eqref{eq-no-two-deg-1} holds.

Let $S_{i}$ ($i=1,2$) be a minimum transversal of $G_{i}$. The set
$S_{1} \cup S_{2} \cup \{u\}$ is a transversal of $G$, implying
\begin{equation}
\label{eq-f2-G-Gi}
s(G) \leq s(G_{1}) + s(G_{2}) + 1.
\end{equation}
The remainder of the proof is split into two cases, depending on whether
$s(G) \leq s(G_{1}) + s(G_{2})$
or $s(G) = s(G_{1}) + s(G_{2}) + 1$.

\medskip{\bf Case 1.} $s(G) \leq s(G_{1}) + s(G_{2})$: 
By~\eqref{eq-no-two-deg-1}, without loss of generality, if $u$ has degree $1$ in $G_{1}$ or $G_{2}$, then
$u$ has degree $1$ in $G_{1}$, and that if $v$ has degree $1$ in $G_{1}$ or $G_{2}$, then
$v$ has degree $1$ in $G_{2}$.
Let $u_{1}$ be a neighbor of $u$ in $G_{1}$.
Let $v_{2}$ be a neighbor of $v$ in $G_{2}$.
Let $G'_{1}$ be obtained from $G_{1}$ by removing $u$ if $u$ has degree $1$ in $G_{1}$.
Define $G'_{2}$ similarly with respect to $G_{2}$ and $v$.
By our assumption on $u$ and $v$, 
\begin{align*}
\phi(G) - \phi(G'_{1}) - \phi(G'_{2}) =\, &
\phi_{G}(u) - \phi_{G'_{1}}(u) - \phi_{G'_{2}}(u) +
\phi_{G}(u_{1}) - \phi_{G'_{1}}(u_{1}) \\
& +
\phi_{G}(v) - \phi_{G'_{1}}(v) - \phi_{G'_{2}}(v) +
\phi_{G}(v_{2}) - \phi_{G'_{2}}(v_{2}).
\end{align*}
(Recall that $\phi_{H}(x)=0$ if $x \notin V(H)$; thus $\phi_{G'_{1}}(u)=0$ if $u \notin V(G'_{1})$
for instance.)
First, we show that
\begin{equation}
\label{eq-diff-u}
\phi_{G}(u) - \phi_{G'_{1}}(u) - \phi_{G'_{2}}(u) +
\phi_{G}(u_{1}) - \phi_{G'_{1}}(u_{1})
\geq 1.
\end{equation}
If $u$ has degree $1$ in $G_{1}$, then
Lemma~\ref{lemma-potential} \eqref{p-a-12} yields
$$
\phi_{G}(u) - \phi_{G'_{1}}(u) - \phi_{G'_{2}}(u) = \phi_{G}(u) - \phi_{G'_{2}}(u) \geq \frac12
$$
and
$$
\phi_{G}(u_{1}) - \phi_{G'_{1}}(u_{1}) \geq \frac12,
$$
implying \eqref{eq-diff-u}. If, on the other hand, $u$ has degree at least $2$ in $G_{1}$, then
$\phi_{G}(u_{1}) = \phi_{G'_{1}}(u_{1})$, and
$$
\phi_{G}(u) - \phi_{G'_{1}}(u) - \phi_{G'_{2}}(u) \geq 1
$$
by Lemma~\ref{lemma-potential} \eqref{p-bc-1}. Thus, \eqref{eq-diff-u} holds in this case too.

By a symmetric argument, 
\begin{equation}
\label{eq-diff-v}
\phi_{G}(v) - \phi_{G'_{1}}(v) - \phi_{G'_{2}}(v) +
\phi_{G}(v_{2}) - \phi_{G'_{2}}(v_{2})
\geq 1.
\end{equation}
Combining \eqref{eq-diff-u} and \eqref{eq-diff-v}, 
\begin{equation}
\label{eq-diff-global}
\phi(G) - \phi(G'_{1}) - \phi(G'_{2}) \geq 2.
\end{equation}

Now, $G'_{1}$ and $G'_{2}$ are obviously connected.
Moreover, $s(G'_{i}) = s(G_{i})$ for $i\in \{1,2\}$ by Lemma~\ref{lemma-2fvs}.
Since both $G'_{1}$ and $G'_{2}$ are smaller than $G$, each of them
satisfies Theorem~\ref{prop-phi-2-fvs-connected}.
Using part (\ref{prop-phi-2-fvs-connected-1st-part}) of the latter theorem
on these two graphs in combination with \eqref{eq-diff-global}, 
$$
\phi(G) \geq
\phi(G'_{1}) + \phi(G'_{2}) + 2 \geq
\left(\frac{16}{3}s(G_{1}) - 1\right) + \left(\frac{16}{3}s(G_{2}) -1\right) + 2
\geq \frac{16}{3}s(G),
$$
which contradicts the fact that $G$ is a counter-example.

\medskip{\bf Case 2.} $s(G) = s(G_{1}) + s(G_{2}) + 1$:
Let $G'_{i} := G_{i} + uv$ for $i=1,2$.
We claim that $s(G'_{i}) > s(G_{i})$ for some $i\in \{1,2\}$.
Indeed, assume otherwise, let $S_{i}$ ($i=1,2$) be a minimum transversal of $G'_{i}$, and
let $S:=S_{1} \cup S_{2}$. Then $G - S$ contains a subgraph $H$ isomorphic to
a subdivision of $K_{4}$ since $|S| \leq s(G_{1}) + s(G_{2}) < s(G)$. Now,
thanks to the edge $uv$, one can easily find from $H$ a subdivision of $K_{4}$ in one of
$G'_{1} - S_{1}$ and $G'_{2} - S_{2}$, a contradiction.
Hence, $s(G'_{i}) > s(G_{i})$  for some $i\in \{1,2\}$; 
without loss of generality, $s(G'_{2}) > s(G_{2})$.

Let $S_{i}$ ($i=1,2$) be a minimum transversal of $G_{i}$. The sets $S_{1}$ and $S_{2}$
cannot include any vertex of the cutset $X$. Indeed, otherwise $S_{1} \cup S_{2}$
would be a transversal of $G$ of size $|S_{1} \cup S_{2}| \leq s(G_{1}) + s(G_{2}) < s(G)$,
a contradiction.
(Here we use that every $K_{4}$ subdivision in $G$ that meets both $V(G_{1})-X$ and $V(G_{2})-X$
 contains both vertices in $X$.)
Now, if $s(G_{1}-u) < s(G_{1})$, then any minimum transversal of $G_{1}-u$ can be extended to one
of $G$ of size at most $s(G)$ by adding $u$, which is not possible as we have seen. Since the same
holds for $G_{1} - v_{1}$, 
\begin{equation}
\label{eq-f2-G1}
s(G_{1} - u) = s(G_{1} - v) = s(G_{1}).
\end{equation}
(Note that this is also true for $G_{2}$ but we will not need this fact.)
Exchanging $u$ and $v$ if necessary, we may assume
\begin{equation}
\label{eq-deg-u-deg-v}
\deg_{G_{1}}(u) \geq \deg_{G_{1}}(v).
\end{equation}

Let $H_{1}$ be the graph $G_{1} - u$ where the vertex $v$ is removed if
$v$ has degree 1 in $G_{1}$.
Note that the graph $G_{1} - u$ is connected (otherwise, $u$ would be a cutvertex of $G$).
Thus, $H_{1}$ is connected as well. Also,
$$
s(H_{1}) = s(G_{1} - u) = s(G_{1}),
$$
by Lemma~\ref{lemma-2fvs} and~\eqref{eq-f2-G1}. Since both $H_{1}$ and $G'_{2}$
are connected and smaller than $G$, they
satisfy Theorem~\ref{prop-phi-2-fvs-connected}.
It follows
$$
\phi(H_{1}) + \phi(G'_{2}) \geq
\left(\frac{16}{3}s(G_{1}) - 1\right) + \left(\frac{16}{3}\left(s(G_{2}) + 1\right) - 1\right)
= \frac{16}{3} s(G) - 2.
$$
(Recall that $s(G'_{2}) > s(G_{2})$ and $s(G) = s(G_{1}) + s(G_{2}) + 1$.)
In order to conclude the proof it is enough to show that 
$\phi(G) \geq \phi(H_{1}) + \phi(G'_{2}) + 2$, since this implies
$\phi(G) \geq \frac{16}{3} s(G)$ and thus that $G$ is not a counter-example.

\medskip{\bf Case 2a.} $v$ has degree 1 in $G_{1}$:
Then $H_{1} = G_{1} - \{u,v\}$.
Also, by~\eqref{eq-no-two-deg-1}, the vertex $u$ has degree at least 2 in $G_{1}$.
Using Lemma~\ref{lemma-potential} \eqref{p-a-12} and \eqref{p-a-1}, and considering the neighbors
of $u$ and $v$ in $G_{1}$ we obtain
$$
\phi(G_{1}) \geq \phi(H_{1}) + \phi_{G_{1}}(u) + \phi_{G_{1}}(v) + \frac32.
$$
(Note that there are two cases to consider: either $u$ and $v$ have one common neighbor or none.)
Also,
$$
\phi_{G}(u) \geq \phi_{G'_{2}}(u) + \frac12,
$$
since $\deg_{G_{1}}(u) \geq 1$ and thus
$\deg_{G'_{2}}(u) \leq \deg_{G}(u) - 1$. Using these observations and the fact that
$\phi_{G}(v) = \phi_{G'_{2}}(v)$,
\begin{align*}
\phi(G) 
& = \Big(\phi(G_{1}) - \phi_{G_{1}}(u) - \phi_{G_{1}}(v) \Big)
+ \phi(G'_{2})
+ \left(\phi_{G}(u) - \phi_{G'_{2}}(u)\right)
+ \left(\phi_{G}(v) - \phi_{G'_{2}}(v)\right)\\
& \geq \left(\phi(H_{1}) +\frac32\right) + \phi(G'_{2}) + \frac12
= \phi(H_{1}) + \phi(G'_{2}) + 2,
\end{align*}
as desired.

\medskip{\bf Case 2b.} $v$ has degree at least $2$ in $G_{1}$:
Then $H_{1} = G_{1} - u$, and $u$ has degree at least 2 in $G_{1}$
by~\eqref{eq-deg-u-deg-v}. Considering the neighbors of $u$ in $G_{1}$, 
\begin{equation}
\label{eq:G1-H1}
\phi(G_{1}) \geq \phi(H_{1}) + \phi_{G_{1}}(u) + \frac12\deg_{G_{1}}(u),
\end{equation}
from Lemma~\ref{lemma-potential} \eqref{p-a-12}.
Similarly,
\begin{equation}
\label{eq:G'2-u}
\phi_{G}(u) \geq \left\{
\begin{array}{lll}
\phi_{G'_{2}}(u) + \frac12 & & \textrm {if } \deg_{G_{1}}(u)=2, \\[1ex]
\phi_{G'_{2}}(u) + 1 & & \textrm {if } \deg_{G_{1}}(u)\geq 3.
\end{array}
\right.
\end{equation}
Also,
\begin{equation}
\label{eq:G1-v}
\phi_{G}(v) \geq \left\{
\begin{array}{lll}
\phi_{G_{1}}(v) + \phi_{G'_{2}}(v) + \frac12 & & \textrm {if } \deg_{G_{1}}(v)=2, \\[1ex]
\phi_{G_{1}}(v) + \phi_{G'_{2}}(v) - \frac12 & & \textrm {if } \deg_{G_{1}}(v)\geq 3.
\end{array}
\right.
\end{equation}
(The bound for the $\deg_{G_{1}}(v)\geq 3$ case is derived
using Lemma~\ref{lemma-potential} \eqref{p-bc-12}.)
By \eqref{eq:G1-H1}, 
\begin{align*}
\phi(G) 
& = \Big(\phi(G_{1}) - \phi_{G_{1}}(u)\Big)
+ \phi(G'_{2})
+ \left(\phi_{G}(u) - \phi_{G'_{2}}(u)\right)
+ \left(\phi_{G}(v) - \phi_{G_{1}}(v) - \phi_{G'_{2}}(v)\right)\\
& \geq \phi(H_{1}) + \frac12\deg_{G_{1}}(u) + \phi(G'_{2})
+ \phi_{G}(u) - \phi_{G'_{2}}(u) + \phi_{G}(v) - \phi_{G_{1}}(v) - \phi_{G'_{2}}(v).
\end{align*}
Combining this with \eqref{eq:G'2-u}, \eqref{eq:G1-v}, and the inequality
$\deg_{G_{1}}(v) \leq \deg_{G_{1}}(u)$ (cf.\ \eqref{eq-deg-u-deg-v}),
we deduce that $\phi(G) \geq \phi(H_{1}) + \phi(G'_{2}) + 2$, both in the case
$\deg_{G_{1}}(v)=2$ and in the case $\deg_{G_{1}}(v)\geq3$.
This concludes the proof.
\end{proof}

\begin{lemma}
\label{lemma-not-counter-example-b-case-2}
$G$ satisfies part (\ref{prop-phi-2-fvs-connected-2nd-part})
of Theorem~\ref{prop-phi-2-fvs-connected}.
\end{lemma}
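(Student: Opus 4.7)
The plan is to argue by contradiction. Suppose $G$ is a counterexample to part~(\ref{prop-phi-2-fvs-connected-2nd-part}), so $G$ is reduced, not $3$-connected, and $\phi(G)<\tfrac{16}{3}s(G)$. By Lemma~\ref{lemma-not-counter-example-b-case-1}, every $2$-cutset of $G$ induces an edge, so I fix a $2$-cutset $X=\{u,v\}$ with $uv\in E(G)$ and let $(G_{1},G_{2})$ be a corresponding separation; both $G_{i}$ are $2$-connected (since $G-u$ and $G-v$ are) with $\verts{G_{i}}\geq 4$, hence strictly smaller than $G$. The key structural input is the standard $2$-sum-at-an-edge decomposition: because $uv\in E(G)$ and $X$ is a $2$-cutset, every $K_{4}$-minor of $G$ lies entirely in $G_{1}$ or in $G_{2}$. (Two branch vertices on opposite sides would force three internally disjoint $X$-paths, impossible since $|X|=2$; and a $K_{4}$-subdivision with all branch vertices on one side whose path detours through the other must use both $u$ and $v$ as internal vertices and can then be rerouted through the edge $uv$.) Consequently, $T\subseteq V(G)$ is a transversal of $G$ if and only if $T\cap V(G_{i})$ is a transversal of $G_{i}$ for $i=1,2$.

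Taking disjoint minimum transversals of $G_{1}$ and $G_{2}$ gives $s(G)\leq s(G_{1})+s(G_{2})$. For the reverse bound, Lemma~\ref{lem-include-exclude} applied to the critical edge $uv$ yields $S\subseteq V(G)-X$ with $|S|=s(G)-1$ such that $S\cup\{u\}$ and $S\cup\{v\}$ are both minimum transversals of $G$; writing $S_{i}:=S\cap V(G_{i})$, each $S_{i}\cup\{u\}$ is a transversal of $G_{i}$, so $s(G_{i})\leq|S_{i}|+1$ and hence $s(G_{1})+s(G_{2})\leq s(G)+1$. Thus $s(G_{1})+s(G_{2})\in\{s(G),s(G)+1\}$. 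By minimality of $G$, part~(\ref{prop-phi-2-fvs-connected-1st-part}) gives $\phi(G_{i})\geq\tfrac{16}{3}s(G_{i})-1$ for each $i$; and from $\deg_{G}(u)=\deg_{G_{1}}(u)+\deg_{G_{2}}(u)-1$, Lemma~\ref{lemma-potential}(\ref{p-bc-12}) at $u$ and at $v$ yields the transfer $\phi(G)\geq\phi(G_{1})+\phi(G_{2})-1$. If $s(G_{1})+s(G_{2})=s(G)+1$, combining these bounds gives $\phi(G)\geq\tfrac{16}{3}(s(G)+1)-3=\tfrac{16}{3}s(G)+\tfrac{7}{3}$, contradicting the counterexample assumption.

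The hard case is therefore $s(G_{1})+s(G_{2})=s(G)$, where the plain bound only delivers $\phi(G)\geq\tfrac{16}{3}s(G)-3$ and three extra units of slack must be found. I first note that $uv$ cannot be critical in both $G_{1}$ and $G_{2}$: otherwise Lemma~\ref{lem-include-exclude} applied to $uv$ inside each $G_{i}$ produces sets $R_{i}\subseteq V(G_{i})-X$ with $R_{i}\cup\{u\}$ a minimum transversal of $G_{i}$, and then $R_{1}\cup R_{2}\cup\{u\}$ is a transversal of $G$ of size $s(G_{1})+s(G_{2})-1=s(G)-1$, impossible. So after relabeling $uv$ is not critical in $G_{2}$, giving $s(G_{2}-uv)=s(G_{2})$ and a strictly smaller connected graph $G_{2}-uv$ accessible to the induction hypothesis. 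The main obstacle is the subsequent case analysis on the four degrees $\deg_{G_{1}}(u),\deg_{G_{2}}(u),\deg_{G_{1}}(v),\deg_{G_{2}}(v)$: the Lemma~\ref{lemma-potential}(\ref{p-bc-12}) transfer is tightest when $\deg_{G}(u)=\deg_{G}(v)=5$ with a $(3,3)$-split on each side, and in those configurations I will combine the induction hypothesis applied to $G_{2}-uv$ with the sharper parts of Lemma~\ref{lemma-potential} and, whenever one side happens to be reduced but not $3$-connected, part~(\ref{prop-phi-2-fvs-connected-2nd-part}) of the induction hypothesis itself for an additional unit of slack, then carry out careful degree-by-degree bookkeeping to close the remaining gap in every configuration and reach the final contradiction.
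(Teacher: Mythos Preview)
Your setup and the easy case $s(G_{1})+s(G_{2})=s(G)+1$ are correct and match the paper. The observation that $uv$ must be non-critical on one side is also right and proved the same way. The gap is in the hard case $s(G_{1})+s(G_{2})=s(G)$: you defer to ``careful degree-by-degree bookkeeping'', but the bookkeeping you sketch does not close. Pairing $G_{1}$ with $G_{2}-uv$ (no further modification), one needs $\phi(G)\geq\phi(G_{1})+\phi(G_{2}-uv)+2$, i.e.\ a contribution of at least $1$ at each of $u,v$. With $a:=\deg_{G_{1}}(x)$ and $b:=\deg_{G_{2}}(x)$ (so $a,b\geq 2$, $a+b-1\leq 5$), the contribution at $x$ is $\phi(a+b-1)-\phi(a)-\phi(b-1)$; this is only $\tfrac12$ for $(a,b)=(4,2)$ and $\tfrac23$ for $(a,b)=(3,2)$, so if both $u$ and $v$ land in these cases the total is at most $\tfrac43<2$. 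In particular, the configuration you single out as tightest---degree~$5$ with a $(3,3)$ split, giving $\tfrac{7}{6}$ each---is \emph{not} the bottleneck; the real obstruction is $b=2$, i.e.\ $x$ becoming a degree-$1$ vertex of $G_{2}-uv$. Your fallback of invoking part~(\ref{prop-phi-2-fvs-connected-2nd-part}) on one side has no reason to apply: neither $G_{1}$ nor $G_{2}-uv$ need be reduced.

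The missing idea, which the paper supplies, is to \emph{clean up} the non-critical side before applying induction: in $G_{2}-uv$, delete any vertex of $X$ that has degree~$1$ there (legitimate by Lemma~\ref{lemma-2fvs}, which preserves $s(\cdot)$). After cleanup, if $\deg_{G_{2}-uv}(x)\geq 2$ one gets a contribution $\geq 1$ directly from Lemma~\ref{lemma-potential}(\ref{p-bc-1}); if $\deg_{G_{2}-uv}(x)=1$, then $x$ is removed, its contribution becomes $\phi_{G}(x)-\phi_{G_{1}}(x)\geq\tfrac12$, and the drop in potential at the unique neighbour of $x$ in $G_{2}-uv$ supplies the remaining $\tfrac12$ (one checks these neighbours are distinct when both $u,v$ are removed, else $G$ would have a cutvertex). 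This cleanup step is the substance of the hard case; once you add it, the argument goes through exactly as in the paper.
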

\begin{proof}
Assume that the lemma is false. Then, by Lemma~\ref{lemma-not-counter-example-b-case-1},
$G$ has a cutset $X=\{u,v\}$ with $uv \in E(G)$.
Let $(G_{1}, G_{2})$ be a separation of $G$ with $V(G_{1}) \cap V(G_{2}) = X$. Notice that
$\verts{G_{i}} \geq 4$ for $i=1,2$ (as in Lemma~\ref{lemma-not-counter-example-b-case-1}).

Let $S_{i}$ ($i=1,2$) be a minimum transversal of $G_{i}$, and
let $S:=S_{1} \cup S_{2}$. If $G - S$ contains
a subdivision of $K_{4}$ then one can find a subdivision of $K_{4}$ in one of
$G_{1} - S_{1}$ and $G_{2} - S_{2}$ thanks to the existence of the edge $uv$.
Thus, $S$ is a transversal of $G$. This implies
$$
s(G) \leq s(G_{1}) + s(G_{2}).
$$

First, suppose $s(G) < s(G_{1}) + s(G_{2})$. Since
the graph $G_{i}$ ($i=1,2$) is connected and smaller than $G$,
Theorem~\ref{prop-phi-2-fvs-connected} gives
$$
\phi(G_{1}) + \phi(G_{2}) \geq
\frac{16}{3} \big(s(G_{1}) + s(G_{2})\big) - 2
\geq \frac{16}{3} s(G) + \frac{16}{3} - 2.
$$
Also,
$$
\phi_{G}(x) \geq \phi_{G_{1}}(x) + \phi_{G_{2}}(x) - \frac12
$$
for each $x\in X$ by Lemma~\ref{lemma-potential} \eqref{p-bc-12}. It follows
\begin{align*}
\phi(G) & =
\phi(G_{1}) + \phi(G_{2}) + \Big(\phi_{G}(u) - \phi_{G_{1}}(u) - \phi_{G_{2}}(u)\Big)
+ \Big(\phi_{G}(v) - \phi_{G_{1}}(v) - \phi_{G_{2}}(v)\Big)\\
& \geq \left(\frac{16}{3} s(G) + \frac{16}{3} - 2\right) - \frac12 - \frac12
\geq \frac{16}{3} s(G),
\end{align*}
contradicting the fact that $G$ is a counter-example to
Theorem~\ref{prop-phi-2-fvs-connected} (\ref{prop-phi-2-fvs-connected-2nd-part}).
Therefore, $s(G) = s(G_{1}) + s(G_{2})$.

If the edge $uv$ is critical in both $G_{1}$ and $G_{2}$, then $S_{i} \cup \{u\}$ ($i=1,2$)
is a transversal of $G_{i}$, where $S_{i}$ ($i=1,2$) is a minimum
transversal of $G_{i} - uv$ of size $s(G_i)-1$. But then $S_{1} \cup S_{2} \cup \{u\}$
is a transversal of $G$ of size $s(G) - 1$, a contradiction. Hence, $uv$ is
not critical in at least one of $G_{1}, G_{2}$, say $G_{1}$.

Let $G'_{1}$ be the graph $G_{1} - uv$ where each vertex of $X$ of degree 1
in $G_{1} - uv$ is removed. The graph $G'_{1}$ is connected and $s(G'_{1}) = s(G_{1}-uv) = s(G_{1})$.
Also, $G'_{1}$ and $G_{2}$ are both smaller than $G$, and thus
satisfy Theorem~\ref{prop-phi-2-fvs-connected}. Hence,
$$
\phi(G'_{1}) + \phi(G_{2}) \geq
\frac{16}{3} \big(s(G_{1}) + s(G_{2})\big) - 2
= \frac{16}{3} s(G) - 2.
$$
Our aim now is to reach a contradiction by showing
$\phi(G) \geq \phi(G'_{1}) + \phi(G_{2}) + 2$.

Consider the vertex $u$ and let $u_{1}$ be a neighbor of $u$ in $G_{1} -uv$. Then,
by Lemma~\ref{lemma-potential} \eqref{p-a-12},
\begin{equation}
\label{eq:u-u1-G'1}
\phi_{G}(u_{1}) - \phi_{G'_{1}}(u_{1}) \geq \left\{
\begin{array}{lll}
\frac12 & & \textrm{if } \deg_{G_{1} - uv}(u)=1, \\[1ex]
0 & & \textrm{otherwise}. \\
\end{array}
\right.
\end{equation}
Similarly,
\begin{equation}
\label{eq:u-u1-G'1-G2}
\phi_{G}(u) - \phi_{G'_{1}}(u) - \phi_{G_{2}}(u)
\geq \left\{
\begin{array}{lll}
\frac12 & & \textrm{if } \deg_{G_{1} - uv}(u)=1, \\[1ex]
1 & & \textrm{otherwise}. \\
\end{array}
\right.
\end{equation}
(The lower bound for the case $\deg_{G_{1} - uv}(u) > 1$
is derived using Lemma~\ref{lemma-potential} \eqref{p-bc-1}.)

Let $v_{1}$ be a neighbor of $v$ in $G_{1}-uv$. Clearly, \eqref{eq:u-u1-G'1}
and \eqref{eq:u-u1-G'1-G2} are also true
if we replace $u$ and $u_{1}$ with $v$ and $v_{1}$, respectively.
Noticing $u_{1} \neq v_{1}$ if $\deg_{G_{1} - uv}(u)=\deg_{G_{1} - uv}(v)=1$ (otherwise,
$u_{1}$ would be a cutvertex of $G$),
we derive from the previous inequalities that
$\phi(G) \geq \phi(G'_{1}) + \phi(G_{2}) + 2$ in every case;  that is,
when the number of vertices of $X$ having degree 1 in $G_{1} - uv$ is $0$, $1$, and $2$.
\end{proof}

By Lemma~\ref{lemma-not-counter-example-b-case-2}, the graph $G$ is not a counter-example to
part (\ref{prop-phi-2-fvs-connected-2nd-part}) of Theorem~\ref{prop-phi-2-fvs-connected}.
Thus, $G$ is a counter-example to part (\ref{prop-phi-2-fvs-connected-1st-part}) of
the theorem.

\subsection{$G$ Satisfies Part~(\ref{prop-phi-2-fvs-connected-1st-part})  of Theorem~\ref{prop-phi-2-fvs-connected}}
\label{sec-2-fvs-connected-part-a}

Building on the fact that $G$ is not a counter-example to   part~(\ref{prop-phi-2-fvs-connected-2nd-part}) of Theorem~\ref{prop-phi-2-fvs-connected}, we show in this section that $G$ is not a counter-example to part~(\ref{prop-phi-2-fvs-connected-1st-part}) either, giving the desired contradiction. 

Properties \eqref{p-a-12} and \eqref{p-a-1} from Lemma~\ref{lemma-potential} have already been
used several times in the proofs and are used many more times in what follows.
For the sake of brevity, we will take them as granted from now on, no longer making explicit reference to
the lemma.

\begin{lemma}
\label{lem:reduced}
The graph $G$ is reduced.
\end{lemma}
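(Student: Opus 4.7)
The plan is to verify the three defining properties of a reduced graph in turn---minimum degree at least $3$, criticality, and maximum degree at most $5$---and then to conclude $2$-connectedness via Lemma~\ref{lem-cutvertex}. Each property is established by the same recipe: assume it fails, produce a smaller connected graph $G'$ (to which the minimality of $G$ applies), account for the change in potential, and derive $\phi(G) \geq \frac{16}{3}s(G)-1$, contradicting the fact that $G$ violates part~(\ref{prop-phi-2-fvs-connected-1st-part}).

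First I would show that the minimum degree is at least $3$. If some $v \in V(G)$ has $\deg(v) \leq 2$, then Lemma~\ref{lemma-2fvs} produces a smaller graph $G'$ with $s(G') = s(G)$. Inspecting the three reductions shows in each case that $G'$ is connected and $\phi(G') \leq \phi(G)$: the removed vertex has potential $0$; in the two deletion cases its neighbors only see their degrees drop; and in the contraction case the new vertex inherits exactly the degree (hence potential) of its non-pendant neighbor. Then $G'$ would itself be a strictly smaller counter-example to part~(\ref{prop-phi-2-fvs-connected-1st-part}), contradicting the minimality of $G$.

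Next I would show $G$ is critical. Let $uv$ be a non-critical edge, so $s(G-uv) = s(G)$. If $G-uv$ is connected, then since deleting an edge never increases any vertex potential, $\phi(G) \geq \phi(G-uv) \geq \frac{16}{3}s(G) - 1$ by minimality, a contradiction. If instead $uv$ is a bridge, let $G_1, G_2$ be the two components of $G-uv$. Because no $K_4$ subdivision can use a bridge edge, $s(G) = s(G_1) + s(G_2)$, and minimality gives $\phi(G_i) \geq \frac{16}{3}s(G_i)-1$. Using that $\deg_G(u), \deg_G(v) \geq 3$ (so $\deg_{G_i}(u), \deg_{G_i}(v) \geq 2$), property \eqref{p-a-12} of Lemma~\ref{lemma-potential} contributes an extra $\frac12+\frac12 = 1$ to $\phi(G)-\phi(G_1)-\phi(G_2)$, closing the gap to $\frac{16}{3}s(G)-1$. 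Since $G$ is then critical, connected, and has $|V(G)| \geq 2$, Lemma~\ref{lem-cutvertex} immediately yields $2$-connectedness.

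Finally, for the maximum degree, I would assume some $v$ has $\deg(v) \geq 6$. By $2$-connectedness, $G-v$ is connected, so minimality gives $\phi(G-v) \geq \frac{16}{3}s(G-v) - 1 \geq \frac{16}{3}(s(G)-1) - 1$. Each of the $\deg(v)$ neighbors of $v$ has degree at least $3$ in $G$, so property \eqref{p-a-12} of Lemma~\ref{lemma-potential} contributes at least $\frac12$ per neighbor when passing from $G-v$ to $G$, and $v$ itself contributes $\phi(\deg v) = \deg(v)/2$; summing gives $\phi(G) \geq \phi(G-v) + \deg(v)$, so $\phi(G) \geq \frac{16}{3}s(G) - \frac{16}{3} - 1 + \deg(v)$. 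This exceeds $\frac{16}{3}s(G) - 1$ exactly when $\deg(v) \geq \frac{16}{3}$, i.e., $\deg(v) \geq 6$, the desired contradiction. The main obstacle is the sharpness of this last estimate: it barely survives at $\deg(v) = 6$, which is precisely why reduced graphs are defined to have degrees in $\{3,4,5\}$.
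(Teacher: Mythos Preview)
Your proof is correct and follows essentially the same approach as the paper's own argument: eliminate low-degree vertices via Lemma~\ref{lemma-2fvs}, rule out bridges using $s(G)=s(G_1)+s(G_2)$ and Lemma~\ref{lemma-potential}\eqref{p-a-12}, deduce criticality (hence $2$-connectedness via Lemma~\ref{lem-cutvertex}), and finally exclude vertices of degree at least $6$ by the same potential count. The only difference is cosmetic---you fold the bridge case into the criticality step, whereas the paper handles bridges first and then observes that every edge must be critical since $G-e$ stays connected.
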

\begin{proof}
First, suppose that $G$  has a vertex $v$ of degree at most $2$, and let
$G'$ be obtained from $G$ by applying the appropriate operation from Lemma~\ref{lemma-2fvs} on $v$.
The graph $G'$ is connected with $s(G') = s(G)$ and $\phi(G') \leq \phi(G)$. This implies
that $G'$ is a smaller counter-example, a contradiction. Hence, $G$ has minimum degree at least $3$.

Next, assume that $G$ has a bridge $uv \in E(G)$, and denote by $G_1$ and $G_2$ the
two components of $G - uv$, with $u\in V(G_{1})$ and $v\in V(G_{2})$.
Since $u$ and $v$ have each degree at least $3$ in $G$, we have
$\phi_{G}(u) \geq \phi_{G_{1}}(u) + \frac12$ and
$\phi_{G}(v) \geq \phi_{G_{2}}(v) + \frac12$. Also, $s(G) = s(G_1) + s(G_2)$, which follows from the fact that no
$K_4$ subdivision in $G$ contains the edge $uv$.
Moreover, $G_{1}$ and $G_{2}$ are both connected and smaller than $G$.
Applying Theorem~\ref{prop-phi-2-fvs-connected} on these two graphs gives
\begin{align*}
\phi(G) & =  \phi(G_1) + \phi(G_2) + \Big(\phi_{G}(u) - \phi_{G_{1}}(u)\Big)
+ \Big(\phi_{G}(v) - \phi_{G_{2}}(v)\Big) \\
& \geq \phi(G_1) + \phi(G_2) + \frac12 + \frac12
\geq \frac{16}{3}\big(s(G_1) + s(G_2)\big) - 2 + 1
= \frac{16}{3} s(G) - 1,
\end{align*}
a contradiction. Hence, $G - e$ is connected for every edge $e\in E(G)$.
This implies that every edge $e \in E(G)$ is critical, since otherwise $G - e$
would be a smaller counter-example. It follows then from Lemma~\ref{lem-cutvertex}
that $G$ is $2$-connected.

Now, suppose that $G$ has a vertex $v$ of degree at least $6$.
Then $G - v$ is connected and $s(G - v) \geq s(G) - 1$.
Moreover, $\phi(G) - \phi(G - v) \geq 3 + 6\cdot\frac12 = 6$, since every
neighbor of $v$ has degree at least $3$ in $G$. It follows
$$
\phi(G) \geq \phi(G - v) + 6
\geq \frac{16}{3} s(G - v) - 1 + 6 \geq \frac{16}{3} s(G ) - 1 -16 / 3 + 6 \geq \frac{16}{3} s(G) - 1,
$$
which is again a contradiction. Hence, $G$ has maximum degree at most $5$, and
therefore $G$ is reduced.
\end{proof}

Vertices of degree $5$ in $G$ are the focus of our attention in the next  lemmas. 
We study the local structure of $G$ around such vertices, eventually concluding that there is no degree-$5$ vertex in $G$.

\begin{lemma}
\label{lemma-no-neighbor-3-at-most-one-4}
If $v\in V(G)$ has degree 5, then $v$ has no neighbor of degree 3, and at most one of degree 4.
\end{lemma}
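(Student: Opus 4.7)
The plan is to derive both assertions uniformly via a simple deletion argument. Suppose for contradiction that a degree-$5$ vertex $v$ of $G$ violates one of the two conclusions, and set $G' := G - v$. Because $G$ is reduced by Lemma~\ref{lem:reduced}, it is $2$-connected, so $G'$ is connected; moreover $\verts{G'} + \edges{G'} < \verts{G} + \edges{G}$, so by the minimality of $G$ the graph $G'$ satisfies Theorem~\ref{prop-phi-2-fvs-connected}, and in particular part~(\ref{prop-phi-2-fvs-connected-1st-part}) gives $\phi(G') \geq \frac{16}{3} s(G') - 1$. Also $s(G) \leq s(G') + 1$, since any minimum transversal of $G'$ becomes a transversal of $G$ once $v$ is added to it.

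I would then carry out the potential accounting. Deleting $v$ decreases the degree of each of its five neighbors by exactly $1$ and leaves all other degrees unchanged, so
$$\phi(G) - \phi(G') = \phi(5) + \sum_{w \in N_G(v)} \bigl(\phi(\deg_G(w)) - \phi(\deg_G(w)-1)\bigr).$$
Since $G$ is reduced, each $\deg_G(w) \in \{3,4,5\}$, so the per-neighbor contribution is $\frac{4}{3}$, $\frac{2}{3}$, or $\frac{1}{2}$ respectively, and in particular is always at least $\frac12$ by Lemma~\ref{lemma-potential}\eqref{p-a-12}. For the first claim, if $v$ has a neighbor of degree $3$ then even when the other four neighbors all have degree $5$,
$$\phi(G) - \phi(G') \geq \frac{5}{2} + \frac{4}{3} + 4 \cdot \frac{1}{2} = \frac{35}{6},$$
and combining with the inductive bound yields $\phi(G) \geq \frac{16}{3}(s(G)-1) - 1 + \frac{35}{6} = \frac{16}{3} s(G) - \frac{1}{2}$. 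This contradicts the fact, established in Section~\ref{sec-2-fvs-connected-part-b}, that $G$ must be a counter-example specifically to part~(\ref{prop-phi-2-fvs-connected-1st-part}), i.e.\ that $\phi(G) < \frac{16}{3} s(G) - 1$.

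For the second claim, the just-proved first part lets me assume all neighbors of $v$ have degree in $\{4,5\}$. If two of them have degree $4$, then even with the remaining three neighbors of degree $5$,
$$\phi(G) - \phi(G') \geq \frac{5}{2} + 2 \cdot \frac{2}{3} + 3 \cdot \frac{1}{2} = \frac{16}{3},$$
giving $\phi(G) \geq \frac{16}{3}(s(G)-1) - 1 + \frac{16}{3} = \frac{16}{3} s(G) - 1$, again a contradiction. There is no genuine obstacle beyond this bookkeeping: the argument turns on the fact that the minimum per-neighbor drop $\phi(5)-\phi(4) = \frac12$ is what fixes the coefficient $\frac{16}{3}$, while the calibration $\phi(3) = \frac43$ (rather than the naive $\frac32$) is precisely what makes a single degree-$3$ neighbor, or a second degree-$4$ neighbor, tip $\phi(G) - \phi(G')$ over that threshold. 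The only care needed is to invoke the correct form of the induction hypothesis, namely part~(\ref{prop-phi-2-fvs-connected-1st-part}) of the theorem, since $G'$ need not be reduced.
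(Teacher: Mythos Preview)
Your proof is correct and follows essentially the same approach as the paper: delete $v$, use $2$-connectivity to ensure $G-v$ is connected, apply the induction hypothesis (part~(\ref{prop-phi-2-fvs-connected-1st-part})) to $G-v$, and compute the potential drop using the per-neighbor contributions $\frac43,\frac23,\frac12$. The only cosmetic difference is that the paper packages both cases into the single inequality $\lambda \geq \frac{16}{3}$, whereas you compute the two cases separately (obtaining $\frac{35}{6}$ and $\frac{16}{3}$), but the content is identical.
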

\begin{proof}
Since $G-v$ is connected and 
$s(G - v) \geq s(G) - 1$, 
$$
\phi(G) = \phi(G - v) + \lambda
\geq \frac{16}{3} s(G - v) - 1 + \lambda \geq \frac{16}{3} s(G ) - 1 -\frac{16}{3} + \lambda,
$$
where
$$
\lambda := \frac52 + \sum_{x \in N_{G}(v)} \Big(\phi_{G}(x) - \phi_{G-v}(x)\Big).
$$
For every integer $d \geq 3$, 
$$
\phi(d) - \phi(d - 1) = \left\{
\begin{array}{ll}
\frac43 & \textrm{ if } d=3; \\[1ex]
\frac23 & \textrm{ if } d=4, \\[1ex]
\frac12 & \textrm{ otherwise}.
\end{array}
\right.
$$
Hence, if $N_{G}(v)$ contains a vertex of degree 3 or two vertices of degree 4, then $\lambda \geq \frac{16}{3}$, implying that $G$ is not a counter-example to Theorem~\ref{prop-phi-2-fvs-connected}(\ref{prop-phi-2-fvs-connected-1st-part}).
\end{proof}

Thus there are only two possibilities for a vertex $v$ of degree 5 in $G$: either all neighbors of $v$ also have degree 5, or four of them have degree 5 and one has degree 4.
In the former case, we call $v$ a {\DEF \fivefive} vertex, while in the latter case we call $v$ a {\DEF \fivefour} vertex.

\begin{lemma}
\label{lemma-3-cutsets}
The graph $G$ is $3$-connected. Moreover, if $X \subseteq V(G)$ is a $3$-cutset of $G$, then $\deg_{G}(v) \in \{3,4\}$ for every vertex $v \in X$.
\end{lemma}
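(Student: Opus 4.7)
The first assertion is immediate from what we have already established. By Lemma~\ref{lem:reduced}, $G$ is reduced, and by Lemma~\ref{lemma-not-counter-example-b-case-2}, $G$ satisfies part~(\ref{prop-phi-2-fvs-connected-2nd-part}) of Theorem~\ref{prop-phi-2-fvs-connected}. Hence, if $G$ were not $3$-connected, applying that part would yield $\phi(G) \geq \tfrac{16}{3}s(G) > \tfrac{16}{3}s(G) - 1$, contradicting the assumption that $G$ violates part~(\ref{prop-phi-2-fvs-connected-1st-part}).

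For the main claim, I would argue by contradiction: suppose $X = \{a,b,c\}$ is a $3$-cutset of $G$ with $\deg_G(a) = 5$, and let $(G_1,G_2)$ be a corresponding separation with $V(G_1)\cap V(G_2) = X$. Both $G_1$ and $G_2$ are connected and strictly smaller than $G$, so they satisfy Theorem~\ref{prop-phi-2-fvs-connected}, giving $\phi(G_i) \geq \tfrac{16}{3}s(G_i) - 1$ for $i=1,2$. Since $G$ is $3$-connected by the first assertion, each $x \in X$ has at least one neighbor in $V(G_1)-X$ and one in $V(G_2)-X$, so in particular $\deg_{G_i}(x) \geq 1$.

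The plan is then to compare $\phi(G)$ and $s(G)$ with the sums $\phi(G_1)+\phi(G_2)$ and $s(G_1)+s(G_2)$. Since every vertex in $V(G)-X$ contributes identically,
$$
\phi(G) - \phi(G_1) - \phi(G_2) = \sum_{x\in X}\bigl(\phi_G(x) - \phi_{G_1}(x) - \phi_{G_2}(x)\bigr),
$$
and the contribution at $a$ should be comparatively large because $\deg_G(a) = 5$, the neighbors of $a$ all have degree at least $4$ by Lemma~\ref{lemma-no-neighbor-3-at-most-one-4}, and $a$ has strictly smaller degree in each~$G_i$. On the transversal side, any $K_4$-subdivision of $G$ avoiding $X$ is contained in one of $G_1,G_2$, so $s(G) \leq s(G_1)+s(G_2)+3$; a more refined choice of transversals---for instance using Lemma~\ref{lem-include-exclude} on an edge of $G_i$ incident to $a$, and exploiting that $a$ lies in some minimum transversal of $G$---should tighten the additive constant, and combining this with the inductive bounds should yield $\phi(G) \geq \tfrac{16}{3}s(G) - 1$, contradicting the choice of $G$.

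The main obstacle is a delicate case analysis to make the numerics work in every configuration. One has to split according to (i) which of the edges $ab, ac, bc$ belong to $G[X]$; (ii) how $\deg_G(a) = 5$ and $\deg_G(b), \deg_G(c) \in \{3,4,5\}$ distribute between $G_1$ and $G_2$ (with $\deg_{G_1}(x) + \deg_{G_2}(x) = \deg_G(x) + |N_{G[X]}(x)|$); and (iii) how $K_4$-subdivisions of $G$ crossing $X$ force extra vertices of $X$ into any minimum transversal. In the worst configurations---e.g.\ when $\{ab, ac\} \subseteq E(G)$ and $a$'s two non-$X$ neighbors split as one on each side---the contribution of $a$ to the display above can be as small as $-\tfrac56$, so one must recover that slack from the contributions of $b,c$ (using Lemma~\ref{lemma-potential} as in Section~\ref{sec-2-fvs-connected-part-b}) or from a sharper bound on $s(G) - s(G_1) - s(G_2)$ coming from the cutset structure. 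Running this accounting to completion in each case then forces $\phi(G) \geq \tfrac{16}{3}s(G) - 1$, the desired contradiction.
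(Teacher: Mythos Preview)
Your first paragraph is correct and matches the paper exactly.

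For the second assertion, however, your plan takes a much harder route than the paper, and the ``delicate case analysis'' you defer is not obviously salvageable: with a $3$-cutset you only get $s(G) \leq s(G_1)+s(G_2)+3$ a priori, and the potential loss $\sum_{x\in X}(\phi_G(x)-\phi_{G_1}(x)-\phi_{G_2}(x))$ can be strictly negative (as you note), so you would need to extract roughly $2 + 3\cdot\tfrac{16}{3} = 18$ units of slack from somewhere. Tightening the transversal bound via Lemma~\ref{lem-include-exclude} on a single edge only saves one unit, and there is no mechanism in your sketch that closes the gap in the worst cases.

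The paper's argument bypasses all of this with one observation you missed: if $v\in X$ has degree~$5$, then $X\setminus\{v\}$ is a $2$-cutset of $G-v$, so $G-v$ is $2$-connected but \emph{not} $3$-connected. Moreover, by Lemma~\ref{lemma-no-neighbor-3-at-most-one-4} every neighbor of $v$ has degree at least~$4$, so $G-v$ has minimum degree~$3$, and $\phi(G)\geq\phi(G-v)+5$. Now either $G-v$ has a non-critical edge $e$ (delete it, gaining one more unit of potential, and apply part~(\ref{prop-phi-2-fvs-connected-1st-part}) inductively to $(G-v)-e$), or $G-v$ is critical and hence reduced, and part~(\ref{prop-phi-2-fvs-connected-2nd-part}) applies to $G-v$ directly, giving $\phi(G-v)\geq\tfrac{16}{3}s(G-v)$ without the $-1$. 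Either way one obtains $\phi(G)\geq\tfrac{16}{3}s(G)-1$. The whole point of part~(\ref{prop-phi-2-fvs-connected-2nd-part}) is precisely to make this step work; your separation approach never invokes it.
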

\begin{proof}
We already know that $G$ is reduced by Lemma~\ref{lem:reduced}, and thus in particular $G$ is $2$-connected.
The graph $G$ is also $3$-connected, as otherwise $G$ would also be a counter-example to
Theorem~\ref{prop-phi-2-fvs-connected} (\ref{prop-phi-2-fvs-connected-2nd-part}), contradicting
Lemma~\ref{lemma-not-counter-example-b-case-2}.

Thus it remains to show the second part of the statement. Arguing by contradiction,
suppose that $X \subseteq V(G)$ is a $3$-cutset of $G$ containing a vertex $v$ of degree 5. 
Then $G-v$ is $2$-connected but not $3$-connected.  
Moreover, by Lemma~\ref{lemma-no-neighbor-3-at-most-one-4}, $G - v$ has minimum degree at least $3$.

First, assume that some edge $e\in E(G-v)$ is not critical in $G-v$.
Let $G' := (G-v)-e$. Observe that $\phi(G - v) \geq  \phi(G') + 1$.
Since $s(G') = s(G-v) \geq s(G) -1$ and the graph $G'$ is connected and smaller than $G$,
Theorem~\ref{prop-phi-2-fvs-connected} gives
\begin{align*}
\phi(G) & = \phi(G - v) +  \frac52 + \sum_{x \in N_{G}(v)} \Big(\phi_{G}(x) - \phi_{G-v}(x)\Big)  \\
& \geq \phi(G - v) + 5 \\
& \geq \phi(G') + 6 
\geq \frac{16}{3}s(G') - 1 + 6
\geq \frac{16}{3}s(G) - \frac{16}{3} - 1 + 6
> \frac{16}{3}s(G) - 1,
\end{align*}
a contradiction. Hence, $G-v$ is critical. In particular, $G-v$ is reduced.
Using Theorem~\ref{prop-phi-2-fvs-connected} (\ref{prop-phi-2-fvs-connected-2nd-part})
on $G-v$, 
\begin{align*}
\phi(G) & = \phi(G - v) +  \frac52 + \sum_{x \in N_{G}(v)} \Big(\phi_{G}(x) - \phi_{G-v}(x)\Big)  \\
& \geq \phi(G - v) + 5 \geq \frac{16}{3}s(G - v) + 5
\geq \frac{16}{3}s(G) - \frac{16}{3} + 5 > \frac{16}{3}s(G) - 1,
\end{align*}
which is again a contradiction.
\end{proof}

The above proof shows why it is useful to prove a stronger bound in the case that the graph is reduced but not $3$-connected. 
(Indeed, it is when we were trying to prove Lemma~\ref{lemma-3-cutsets} that we were led to add part (\ref{prop-phi-2-fvs-connected-2nd-part}) to Theorem~\ref{prop-phi-2-fvs-connected}.) 

For $v \in V(G)$, denote by $G_{v}$ the subgraph of $G$ induced by the neighborhood of $v$.
(Note that $v$ is not included in $G_v$.)

\begin{lemma}
\label{lemma-fivefive-no-stable-set-size-3}
Suppose that $v\in V(G)$ is a {\fivefive} vertex.
Then $\alpha(G_{v})=2$.
\end{lemma}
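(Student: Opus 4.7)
I would argue by contradiction: suppose $\alpha(G_{v}) \geq 3$ and pick pairwise non-adjacent $a, b, c \in N_{G}(v)$, letting $d, e$ denote the remaining two neighbors, so that $N_{G}(v) = \{a, b, c, d, e\}$ and each of these five vertices has degree $5$ in $G$. I would then work with the auxiliary graph $H := (G - v) + \{ab, bc, ca\}$, obtained by deleting $v$ and installing a triangle on the independent set. A direct count of potentials gives
$$
\phi(H) - \phi(G) = -\phi(5) + 3\bigl(\phi(6) - \phi(5)\bigr) + 2\bigl(\phi(4) - \phi(5)\bigr) = -\frac{5}{2} + \frac{3}{2} - 1 = -2,
$$
so $\phi(H) = \phi(G) - 2$. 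Furthermore $H$ is connected (since by Lemma~\ref{lemma-3-cutsets} the graph $G$ is $3$-connected, so $G - v$ is $2$-connected, and adding edges preserves connectivity) and strictly smaller than $G$.

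The key inequality to establish is $s(H) \geq s(G)$. Granted it, Theorem~\ref{prop-phi-2-fvs-connected}(\ref{prop-phi-2-fvs-connected-1st-part}) applied inductively to $H$ gives $\phi(H) \geq \frac{16}{3} s(H) - 1 \geq \frac{16}{3} s(G) - 1$, and combining with $\phi(H) = \phi(G) - 2$ yields $\phi(G) \geq \frac{16}{3} s(G) + 1$, contradicting the assumption that $G$ is a counter-example to Theorem~\ref{prop-phi-2-fvs-connected}(\ref{prop-phi-2-fvs-connected-1st-part}).

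To prove $s(H) \geq s(G)$, it suffices to show that every transversal $T$ of $H$ is already a transversal of $G$. Any $K_{4}$-subdivision $J$ in $G - T$ must use $v$, since $(G - v) - T \subseteq H - T$ has no $K_{4}$-subdivision, and I would case-split on whether $v$ is an internal vertex (degree $2$) or a branch vertex (degree $3$) of $J$. In the internal case, with $J$-neighbors $x, y \in N_{G}(v)$, the path $xvy$ can be shortcut by the edge $xy$: this edge lies in $(G - v) - T$ when $xy \in E(G)$, and in $H - T$ when $xy \in \{ab, bc, ca\}$, and either way produces a $K_{4}$-subdivision in $H - T$. In the branch case with $J$-neighbors exactly $\{a, b, c\}$, the three internally disjoint paths in $J$ from $a, b, c$ to the other three branch vertices of $J$ combine with the newly added triangle to yield a $K_{4}$-subdivision in $H - T$ whose branch set consists of $a$ together with the other three branches of $J$. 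The main obstacle will be handling the remaining configurations---in which the $J$-neighbors of $v$ involve $d$ or $e$, so that the internal shortcut requires a non-edge outside $\{ab, bc, ca\}$ or the branch set differs from $\{a, b, c\}$---which the simple triangle augmentation does not directly cover. Resolving these will likely require either exploiting flexibility in choosing the independent triple (using structural constraints from Lemmas~\ref{lemma-no-neighbor-3-at-most-one-4} and~\ref{lemma-3-cutsets}) or enriching $H$ with further edges while carefully re-tracking the potential change.
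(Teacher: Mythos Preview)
Your approach via the auxiliary graph $H = (G-v) + \{ab, bc, ca\}$ is genuinely different from the paper's, but the gap you flag is real and, as stated, fatal. The cases you leave open---$v$ an internal vertex of $J$ with $J$-neighbors $\{d,e\}$ (or $\{a,d\}$, etc.) where the pair is a non-edge of $G$ not in $\{ab,bc,ca\}$, and $v$ a branch vertex with $J$-neighbors meeting $\{d,e\}$---cannot be dispatched with the triangle you added. For instance, if $v$ is a branch vertex of $J$ with $J$-neighbors $\{a,b,d\}$ and neither $ad$ nor $bd$ nor $cd$ is an edge, there is no way to reroute the $d$-path to any of $a,b,c$ inside $H-T$ (and $c$ may well lie in $T$). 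Your two suggested fixes do not obviously work either: changing the stable triple gives a different $H$, but you need a single $H$ that handles \emph{every} $K_4$-subdivision through $v$; and adding more edges to $N_G(v)$ quickly pushes degrees to $6$ or beyond, so the potential bookkeeping no longer yields the needed drop.

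The paper's argument sidesteps all of this by deleting the stable set itself rather than $v$. If $S \subseteq N_G(v)$ is a stable set of size $3$, then $G-S$ is connected by Lemma~\ref{lemma-3-cutsets}, and trivially $s(G-S) \geq s(G) - 3$; no rerouting of $K_4$-subdivisions is needed. The potential drop is then a straightforward count: the three vertices of $S$ contribute $3\cdot\frac52$, the vertex $v$ drops from degree $5$ to $2$ for another $\frac52$, and the $12$ edges from $S$ to $V(G)\setminus(S\cup\{v\})$ each cost at least $\frac12$ (their endpoints have degree $\geq 4$ by Lemma~\ref{lemma-no-neighbor-3-at-most-one-4}), giving $\phi(G) \geq \phi(G-S) + 16$. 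Combined with $s(G-S)\geq s(G)-3$ this yields $\phi(G) \geq \frac{16}{3}s(G) - 1$, the desired contradiction. This is both shorter and avoids the delicate $s(H)\geq s(G)$ step entirely.

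Finally, note that your proposal only addresses $\alpha(G_v)\leq 2$; you still need the easy observation that $\alpha(G_v)=1$ forces $G\cong K_6$ (which is not a counter-example), so that in fact $\alpha(G_v)=2$.
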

\begin{proof}
First, suppose that $G_{v}$ has a stable set $S$ of size $3$.
By Lemma~\ref{lemma-3-cutsets}, the graph $G-S$ is connected.
Since $S$ is a stable set, each vertex $u\in S$ has exactly four neighbors
outside $S \cup \{v\}$. Let $T$ be the set of all such neighbors.
Note that each vertex in $T$ has degree at least $4$ by Lemma~\ref{lemma-no-neighbor-3-at-most-one-4}, and has
between one and three neighbors in $S$.
Using $s(G - S) \geq s(G) - 3$ and Theorem~\ref{prop-phi-2-fvs-connected}
on $G-S$, 
\begin{align*}
\phi(G) 
& = \phi(G - S) + (\phi_{G}(v) - \phi_{G-S}(v)) +
\sum_{u\in S}\phi_{G}(u) + \sum_{u\in T}(\phi_G(u) - \phi_{G-S}(u))\\
& \geq \phi(G - S) + \frac52 + 3 \frac52 + 6\\
& = \phi(G - S) + 16\\
& \geq \frac{16}{3}s(G - S) - 1 + 16
\geq \frac{16}{3}s(G) - 3\frac{16}{3} - 1 + 16
= \frac{16}{3}s(G) - 1,
\end{align*}
a contradiction. Hence, $\alpha(G_{v}) \leq 2$.

On the other hand, if $\alpha(G_{v})=1$, then $G$ is isomorphic to $K_{6}$,
because $G$ has maximum degree at most $5$. Since $K_{6}$
is not a counter-example to Theorem~\ref{prop-phi-2-fvs-connected}, we deduce
$\alpha(G_{v})=2$, as claimed.
\end{proof}

For a subset $S$ of vertices of $G$, let $e_{G}(S)$ denote the number of edges
between $S$ and $V(G) - S$.  
Observe that, if $|S| \leq 4$ and all vertices in $V(G)-S$ having
a neighbor in $S$ have degree at least $4$, then
$$
\phi(G) \geq \phi(G-S) + \sum_{u\in S}\phi_{G}(u) + \frac12 e_G(S).
$$
This observation is used in the subsequent proofs.

A {\DEF sparse bipartition} of a $5$-vertex graph $H$ is a partition $(X, Y)$ of $V(H)$ such that $|X|=3$, $|Y|=2$, and $\edges{H[X]}=\edges{H[Y]} = 1$.

\begin{lemma}
Suppose that $v\in V(G)$ is a {\fivefive} vertex.
Then $G_{v}$ does not admit a sparse bipartition.
\end{lemma}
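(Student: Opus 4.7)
The plan is to proceed by contradiction using the standard potential-deletion scheme. Assume a sparse bipartition $(X,Y)$ of $G_v$ exists. Label $X=\{a,b,c\}$ so that $ab$ is the unique edge of $G_v[X]$ and $c$ is the isolated vertex (whence $ac,bc\notin E(G)$), and $Y=\{d,e\}$ with $de\in E(G_v)$. The goal is to exhibit a set $S\subseteq V(G)$ of small size such that $G-S$ is connected, $s(G-S)\geq s(G)-|S|$, and $\phi(G)-\phi(G-S)\geq \tfrac{16}{3}|S|$; applying part~(\ref{prop-phi-2-fvs-connected-1st-part}) of Theorem~\ref{prop-phi-2-fvs-connected} inductively to $G-S$ then yields $\phi(G)\geq \tfrac{16}{3}s(G)-1$, contradicting the counter-example assumption.

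The first step is to combine the sparse bipartition structure with the previous lemma ($\alpha(G_v)=2$): since $ac,bc$ are non-edges but $de$ is an edge, each of the four triples $\{a,c,d\},\{a,c,e\},\{b,c,d\},\{b,c,e\}$ must contain at least one edge, forcing specific $X$-$Y$ edges of $G_v$. Splitting on $\{cd,ce\}\cap E(G_v)$ yields three cases (both, exactly one, or neither of $cd,ce$ lies in $E(G_v)$); the ``neither'' case forces all four of $ad,ae,bd,be$, so $\{v,a,b,d,e\}$ induces a $K_5$ in $G$.

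In each case I would select $S\subseteq\{v,a,b,c,d,e\}$ of size at most $4$ tailored to the edge pattern. Connectivity of $G-S$ is automatic since by Lemma~\ref{lemma-3-cutsets} no $3$-cutset of $G$ contains a degree-$5$ vertex, and $v,a,b,c,d,e$ all have degree $5$. The potential drop is computed via
\[
\phi(G)-\phi(G-S)=\sum_{u\in S}\phi_G(u)+\sum_{u\in T}\bigl(\phi_G(u)-\phi_{G-S}(u)\bigr),
\]
with $T$ the set of external neighbors of $S$. By Lemma~\ref{lemma-no-neighbor-3-at-most-one-4}, every $u\in T$ has degree at least $4$ in $G$ (as $S\subseteq N[v]$), so each edge from $S$ to $T$ contributes a drop of at least $\tfrac12$. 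Extra slack comes from multi-edge incidences forced by the sparse bipartition: a degree-$5$ vertex of $T$ losing two edges drops by $\tfrac76$ (bonus $\tfrac16$), and losing three drops by $\tfrac52$ (bonus $1$); a degree-$4$ vertex losing one edge drops by $\tfrac23$ (bonus $\tfrac16$). The forced $X$-$Y$ edges guarantee enough such incidences to close the gap in each case.

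The hardest part is the $K_5$ case: with $\{v,a,b,d,e\}$ a $K_5$ in $G$ and $c$ isolated from this $K_5$ in $G_v$, a removal of $c$ alone falls short of the required $\tfrac{16}{3}$ by $\tfrac13$, and removing pairs or triples from $\{a,b,d,e\}$ also falls short under the naive edge bookkeeping. The plan here is to pick $S$ that simultaneously exploits $K_5$-rigidity and the high multiplicity of $\{a,b\}$-$\{d,e\}$ adjacencies, typically by combining $v$ with a mixed pair from $X\cup Y$ so that triple-edge losses occur at the remaining $K_5$-vertices; the other two cases are handled analogously using the forced edges at $c$ toward $\{d,e\}$.
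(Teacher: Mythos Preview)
Your plan misses the one idea that makes the paper's proof a two-line argument, and without it your hardest case does not close. The paper simply takes $S = X \cup \{v\}$. The point is that in $G-X$ the vertex $v$ has degree~$2$ and its two remaining neighbours (the vertices of $Y$) are adjacent, so by Lemma~\ref{lemma-2fvs} one has $s(G-S)=s(G-X)\geq s(G)-3$ even though $|S|=4$. Thus the required potential drop is only $\tfrac{16}{3}\cdot 3 = 16$, not $\tfrac{16}{3}|S|$. Since $S$ consists of four degree-$5$ vertices and spans exactly four internal edges ($va,vb,vc,ab$), we get $e_G(S)=12$ and hence $\phi(G)-\phi(G-S)\geq 4\cdot\tfrac52 + \tfrac12\cdot 12 = 16$, uniformly, with no case analysis whatsoever.

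Your stated target $\phi(G)-\phi(G-S)\geq \tfrac{16}{3}|S|$ combined with the trivial bound $s(G-S)\geq s(G)-|S|$ is strictly harder, and in your $K_5$ case it fails for the sets you describe. With $S=\{v,a,d\}$ (``$v$ with a mixed pair''), the two remaining $K_5$-vertices $b,e$ each lose three edges (drop $\tfrac52$ each), $c$ loses one ($\tfrac12$), and the outside neighbours $a',d'$ contribute at least $\tfrac12$ each, giving a total drop of at most $\tfrac{15}{2}+\tfrac{13}{2}=14<16$. No choice of $|S|\leq 3$ reaches $16$ here under the naive edge bookkeeping, so the case split you set up cannot be completed as written. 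The fix is precisely the degree-$2$-in-a-triangle trick above: include $v$ in $S$ for free and compare $s(G-S)$ to $s(G)-|X|$ rather than $s(G)-|S|$.
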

\begin{proof}
Arguing by contradiction,
assume $G_{v}$ has a sparse bipartition $(X, Y)$. Let
$S := X \cup \{v\}$. In $G - X$, the two neighbors of $v$ are adjacent, implying
$s(G - S) = s(G - X) \geq s(G) - 3$.
By Lemma~\ref{lemma-3-cutsets},  $G-X$ is connected, and hence  $G-S$ is also connected.
It follows that 
\begin{align*}
\phi(G) \geq \phi(G - S) +
\sum_{u\in S}\phi_{G}(u) + \frac12 e_{G}(S)
& = \phi(G - S) + 10 + 6\\
& \geq \frac{16}{3}s(G - S) - 1 + 16 \geq \frac{16}{3}s(G) - 1,
\end{align*}
a contradiction.
\end{proof}

\begin{lemma}
\label{lemma:C5}
Suppose that $v\in V(G)$ is a {\fivefive} vertex.
Then $G_{v}$ is isomorphic to $C_{5}$, the cycle on five vertices.
\end{lemma}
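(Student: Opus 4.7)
The plan is to leverage the two preceding lemmas (which give $\alpha(G_v) = 2$ and no sparse bipartition of $G_v$) together with a structural dichotomy on $G_v$ and a potential argument. Specifically, I will show that if $G_v \neq C_5$, then the complement $\overline{G_v}$ must contain a $K_2$ component, i.e., there exist $a,b \in N(v)$ with $ab \notin E(G)$ such that each of $a, b$ is adjacent in $G_v$ to every vertex of $N(v) \setminus \{a,b\}$. Taking $S := \{v,a,b\}$, Lemma~\ref{lemma-3-cutsets} forbids $S$ from being a $3$-cutset (all three vertices of $S$ have degree $5$), so $G - S$ is connected, and the potential drop when removing $S$ will be large enough to contradict $G$ being a counter-example.

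For the dichotomy, let $H := \overline{G_v}$ and translate to the complement: $\alpha(G_v) = 2$ says $H$ is triangle-free on $5$ vertices, and the absence of a sparse bipartition of $G_v$ says that $H$ contains no induced $P_3$ whose two complementary vertices are non-adjacent in $H$ (a sparse bipartition of $G_v$ corresponds exactly to such a configuration). I split on the components of $H$. If $H$ is disconnected and has no $K_2$ component, then its component sizes are $\{1,1,1,1,1\}$, $\{1,1,3\}$, or $\{1,4\}$, giving $H \in \{\text{empty}, P_3 + 2K_1, P_4 + K_1, K_{1,3} + K_1, C_4 + K_1\}$; the first case is incompatible with $\alpha(G_v)=2$, and in each of the remaining four an induced $P_3$ inside the large component together with the isolated vertex and one other vertex exhibits the forbidden configuration. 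If $H$ is connected, then $H$ is one of the six connected triangle-free $5$-vertex graphs $\{P_5, T, K_{1,4}, K_{2,3}-e, K_{2,3}, C_5\}$, and a direct check shows that all except $C_5$ admit such a $P_3$. Hence either $H = C_5$ (giving $G_v = C_5$) or $H$ has a $K_2$ component, as required.

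Now assume $G_v \neq C_5$, and let $\{a,b\}$ be the pair extracted from the dichotomy. Because $v$ is pure degree-$5$, every vertex of $N(v)$ has degree $5$ in $G$. Each of $a, b$ has exactly three neighbors inside $N(v)$ and thus exactly one neighbor in $V(G) \setminus (N(v) \cup \{v\})$; call these external neighbors $e_a$ and $e_b$ (possibly coinciding). Each of the three vertices in $N(v) \setminus \{a,b\}$ is adjacent in $G$ to all three of $v, a, b$, so upon removing $S = \{v,a,b\}$ its degree falls from $5$ to $2$, contributing $\phi(5) - \phi(2) = 5/2$ to the drop in $\phi$. Adding $\phi_G(v) + \phi_G(a) + \phi_G(b) = 15/2$ and the contribution of at least $1$ from $e_a, e_b$ (using Lemma~\ref{lemma-potential}(A) if $e_a \neq e_b$ and (B) if $e_a = e_b$), we obtain $\phi(G) - \phi(G - S) \geq 15/2 + 15/2 + 1 = 16$.

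Since $G - S$ is connected and strictly smaller than $G$, part~(\ref{prop-phi-2-fvs-connected-1st-part}) of Theorem~\ref{prop-phi-2-fvs-connected} applies. Using $s(G-S) \geq s(G) - 3$, we deduce
\[
\phi(G) \;\geq\; \phi(G - S) + 16 \;\geq\; \tfrac{16}{3}\bigl(s(G) - 3\bigr) - 1 + 16 \;=\; \tfrac{16}{3}\,s(G) - 1,
\]
contradicting the assumption that $G$ is a counter-example. The main obstacle will be nailing down the structural dichotomy: once the pair $\{a,b\}$ is located, the potential bookkeeping is routine and mirrors the technique used throughout this section, but the combinatorial case analysis of triangle-free $5$-vertex graphs satisfying the two given constraints requires a bit of patience.
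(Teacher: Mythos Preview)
Your proof is correct, and it takes a genuinely different route from the paper's.

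The paper first applies Lemma~\ref{lemma-3-cutsets} in a structural way: the set of vertices of $G_v$ with degree at most $3$ in $G_v$ is a cutset of $G$, so by Lemma~\ref{lemma-3-cutsets} it has size at least $4$, meaning $G_v$ has at most one vertex of degree $4$ in $G_v$. Combined with $\alpha(G_v)=2$ and the absence of a sparse bipartition, this narrows $G_v$ down to three candidates $W_5$, $W_5^-$, and $C_5$, and the first two are then eliminated by two separate potential arguments, each removing a set $S = X \cup \{v\}$ of size~$4$.

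You instead pass to the complement $H = \overline{G_v}$ and run a case analysis on all triangle-free $5$-vertex graphs avoiding the translated ``forbidden $P_3$'' configuration, extracting in every non-$C_5$ case a $K_2$ component $\{a,b\}$ of $H$. This yields two vertices $a,b \in N(v)$ that are non-adjacent to each other but adjacent to all three remaining neighbors of $v$, and you dispose of all these cases at once with a single potential argument removing $S = \{v,a,b\}$ of size~$3$; Lemma~\ref{lemma-3-cutsets} enters only to certify that $G-S$ is connected. (Your two ``bad'' cases $W_5$ and $W_5^-$ from the paper indeed fall under this umbrella: one checks $\overline{W_5} \cong K_1 \cup 2K_2$ and $\overline{W_5^-} \cong K_2 \cup P_3$.)

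The trade-off is that the paper's structural step is shorter (a single degree count) but pays with two potential computations, while your approach has a longer combinatorial case analysis upfront but a single, uniform potential computation afterwards. Both arguments are of comparable overall length and difficulty.
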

\begin{proof}
As shown in the previous two lemmas, $\alpha(G_{v})=2$ and $G_{v}$ does not admit a sparse bipartition.
Observe furthermore that the subset $X$ of vertices of $G_{v}$
having degree at most 3 in $G_{v}$
is a cutset of $G$ separating $\{v\} \cup (V(G_v) \setminus X)$ from the rest of the graph.
It follows from Lemma~\ref{lemma-3-cutsets} that $|X| \geq 4$, and thus
that $G_{v}$ contains at most one vertex of degree 4 in $G_v$.

We leave it to the reader to check that, up to isomorphism, there are exactly three graphs $H$ on five vertices with $\alpha(H)=2$, having no sparse bipartition, and with at most one vertex of degree $4$:
the wheel $W_{5}$, the graph $K_{4}$ with an edge subdivided once (which we denote by $W_{5}^{-}$), and the cycle $C_{5}$. 
Thus $G_v$ is isomorphic to one of these three graphs, illustrated in Figure~\ref{fig-three-graphs-Gv}.

\begin{figure}[h]
\centering
\includegraphics[width=0.7\textwidth]{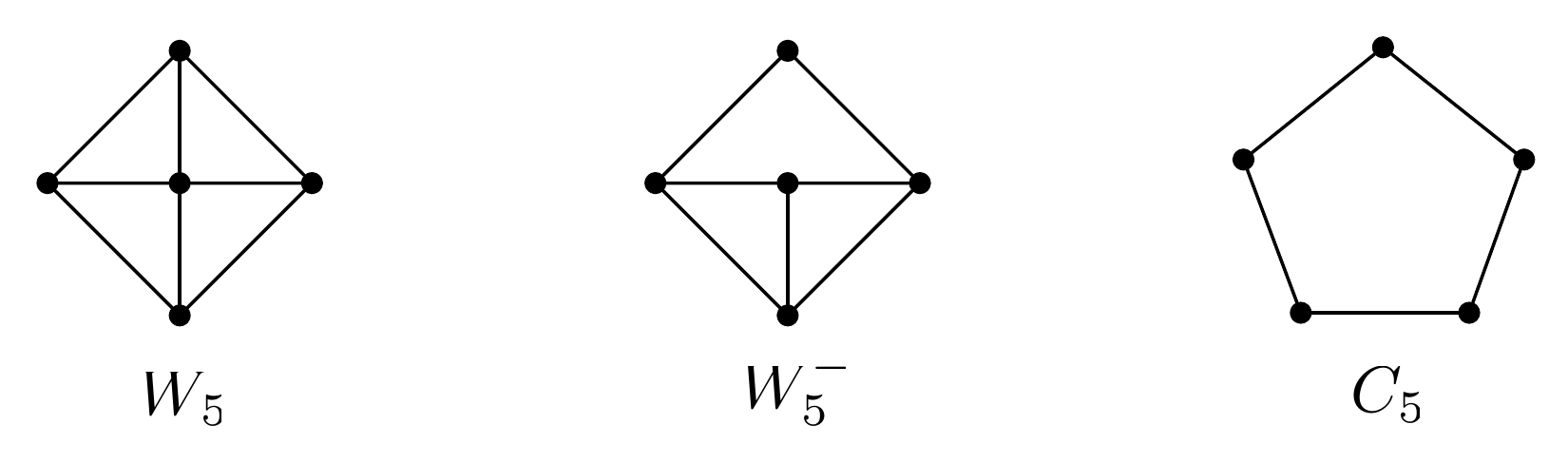}
\caption{\label{fig-three-graphs-Gv}Three possibilities for the graph $G_{v}$.}
\end{figure}

First, suppose $G_{v} \cong W_{5}$, and let $X \subseteq V(G_{v})$ be
such that $|X|=3$ and every vertex in $X$ has degree 3 in $G_{v}$.
Let also $S:= X \cup \{v\}$.
Then $s(G - S) = s(G-X) \geq s(G) - 3$.
Also, $G-X$ is connected by Lemma~\ref{lemma-3-cutsets}, and hence so is $G-S$.
It follows that
\begin{align*}
\phi(G) 
& \geq \phi(G - S) + 4\phi(5) + \big(\phi(5) - \phi(1) \big) + \big(\phi(5) - \phi(2) \big) + 3\frac12\\
& =\phi(G - S) + 16 +\frac12
\geq \frac{16}{3}s(G - S) - 1 + 16 +\frac12
\geq \frac{16}{3}s(G) - 1,
\end{align*}
a contradiction. Hence, $G_{v}$ is not isomorphic to $W_{5}$.

Now, assume $G_{v} \cong W_{5}^{-}$, and let $X \subseteq V(G_{v})$
consist of the unique vertex of $G_{v}$ with degree 2 together with its two neighbors.
Let $S:= X \cup \{v\}$. Similarly as above,
the graph $G-S$ is connected, and we can check that
$s(G - S) \geq s(G) - 3$ and
$\phi(G) \geq \phi(G - S) + 17$, implying again
$\phi(G) \geq \frac{16}{3} s(G) - 1$.
Thus, $G_{v}$ is not isomorphic to $W_{5}^{-}$ either.

It follows that $G_{v}$ is isomorphic to $C_{5}$.
\end{proof}

Next, we deal with the case where $v\in V(G)$ is a {\fivefive} vertex and
$G_{v}$ is isomorphic to $C_{5}$.

\begin{lemma}
There is no {\fivefive} vertex in $G$.
\end{lemma}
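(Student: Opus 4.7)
Suppose for contradiction that $v \in V(G)$ is a pure degree-$5$ vertex. By Lemma~\ref{lemma:C5}, $G_v \cong C_5$ with cycle $v_1 v_2 v_3 v_4 v_5$; each $v_i$ has exactly two external neighbors $w_i, w_i' \in V(G) \setminus N[v]$, both of degree $\geq 4$ by Lemma~\ref{lemma-no-neighbor-3-at-most-one-4}. The plan has two stages. In the first stage I reduce to the case where $G - v$ is reduced and $3$-connected; in the second I derive a contradiction by removing the set $S := \{v, v_1, v_3\}$ and applying Lemma~\ref{lemma-2fvs} to the resulting degree-$2$ vertex $v_2$.

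For the first stage, note that removing $v$ decreases the degree of each of its five neighbors from $5$ to $4$, so $\phi(G) = \phi(G - v) + \frac{5}{2} + 5 \cdot \frac{1}{2} = \phi(G-v) + 5$. If $s(G - v) = s(G)$, then Theorem~\ref{prop-phi-2-fvs-connected}(\ref{prop-phi-2-fvs-connected-1st-part}) applied to the connected, smaller graph $G - v$ yields $\phi(G) \geq \frac{16}{3} s(G) - 1 + 5$, contradicting the counter-example assumption; hence $s(G-v) = s(G) - 1$. If some edge $e$ of $G - v$ were non-critical, then $(G - v) - e$ would be connected ($G - v$ is $2$-connected since $G$ is $3$-connected) with $s((G-v)-e) = s(G) - 1$, so the theorem would give $\phi(G-v) \geq \phi((G-v) - e) + 1 \geq \frac{16}{3}s(G) - \frac{16}{3}$ and $\phi(G) \geq \frac{16}{3}s(G) - \frac{1}{3}$, a contradiction. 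Hence $G - v$ is critical, and being $2$-connected with degrees in $\{3, 4, 5\}$, it is reduced. Finally, if $G - v$ were not $3$-connected, Theorem~\ref{prop-phi-2-fvs-connected}(\ref{prop-phi-2-fvs-connected-2nd-part}) applied to $G - v$ would yield $\phi(G - v) \geq \frac{16}{3}(s(G) - 1)$, again producing $\phi(G) \geq \frac{16}{3}s(G) - \frac{1}{3}$, a contradiction. So $G - v$ is reduced and $3$-connected.

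For the second stage, let $S := \{v, v_1, v_3\}$. In $G - S$ the vertex $v_2$ has exactly the two neighbors $w_2, w_2'$, so I apply the appropriate reduction of Lemma~\ref{lemma-2fvs}: remove $v_2$ if $w_2 w_2' \in E(G)$, or else contract $v_2 w_2$. Let $G''$ be the resulting graph, so $s(G'') = s(G - S) \geq s(G) - 3$. Computing $\phi(G) - \phi(G'')$ involves: the potentials of $v, v_1, v_3$ (and $v_2$, when removed), totalling up to $10$; the drops at $v_4$ and $v_5$, each of which loses two $S$-neighbors and so goes from degree $5$ to $3$, contributing $\frac{7}{6}$ each; and the drops at the external neighbors $w_1, w_1', w_3, w_3'$ (plus $w_2, w_2'$ in the contraction case), each at least $\frac{1}{2}$ per lost adjacency. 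In the most conservative subcase (all external neighbors distinct of degree $5$, with $G''$ connected and $3$-connected) this yields $\phi(G) \geq \phi(G'') + \frac{46}{3} \geq \frac{16}{3}s(G) - \frac{5}{3}$, which narrowly misses the needed bound by $\frac{2}{3}$. The deficit is covered in the easier subcases: (i) if any external neighbor has degree $4$, its drop improves from $\frac{1}{2}$ to $\frac{2}{3}$; (ii) if two external neighbors coincide, the shared vertex loses multiple adjacencies and contributes a larger drop; (iii) if $G''$ fails to be $3$-connected, Theorem~\ref{prop-phi-2-fvs-connected}(\ref{prop-phi-2-fvs-connected-2nd-part}) contributes the missing $1$.

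The main obstacle will be the residual rigid configuration in which all external neighbors $w_i, w_i'$ are pairwise distinct, each has degree $5$ in $G$, and $G''$ is $3$-connected. By Lemma~\ref{lemma-no-neighbor-3-at-most-one-4}, in this configuration every $v_i$ is itself a pure degree-$5$ vertex, so Lemma~\ref{lemma:C5} forces $G_{v_i} \cong C_5$ for each $i$. Analyzing the induced $5$-cycle on $N(v_1) = \{v, v_2, v_5, w_1, w_1'\}$ (using the known edges $v v_2, v v_5$ and the non-edge $v_2 v_5$) shows that, up to swapping $w_1, w_1'$, the cycle must be $v - v_2 - w_1 - w_1' - v_5 - v$. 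Hence $w_1 \in N(v_2)$, and since $N(v_2) \setminus \{v, v_1, v_3\} = \{w_2, w_2'\}$, we obtain $w_1 \in \{w_2, w_2'\}$, contradicting the distinctness assumption and closing the residual case. This completes the proof that $G$ has no pure degree-$5$ vertex.
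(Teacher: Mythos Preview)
Your second stage contains an arithmetic error that breaks the argument in the contraction case. When $w_2 w_2' \notin E(G)$ and you contract $v_2 w_2$ (equivalently, remove $v_2$ and add the edge $w_2 w_2'$), the degrees of $w_2$ and $w_2'$ are \emph{unchanged}: each loses the neighbor $v_2$ but gains the other as a new neighbor. Hence their contribution to $\phi(G) - \phi(G'')$ is zero, not $\tfrac12$ each. The drop in this case is therefore only
\[
4\cdot\tfrac52 + 2\cdot\tfrac76 + 4\cdot\tfrac12 = 10 + \tfrac73 + 2 = \tfrac{43}{3},
\]
leaving a deficit of $\tfrac{48}{3}-\tfrac{43}{3}=\tfrac53$, not $\tfrac23$. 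Your ``easier subcases'' cannot close this gap: each degree-$4$ external neighbor or coincidence among $\{w_1,w_1',w_3,w_3'\}$ improves the drop by only $\tfrac16$, and even invoking part~(\ref{prop-phi-2-fvs-connected-2nd-part}) on $G''$ recovers only $1$ (and would moreover require verifying that $G''$ is reduced, which you do not do). Worse, your residual-case analysis using $G_{v_1}\cong C_5$ only produces coincidences of the form $w_1\in\{w_2,w_2'\}$ or $w_1'\in\{w_5,w_5'\}$; the first kind does not improve the contraction-case drop at all (since $w_2,w_2'$ already contribute $0$), and the second kind is entirely outside the drop calculation. Thus the contraction case remains open even after your structural observation. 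Finally, your first stage (showing $G-v$ is reduced and $3$-connected) is never used in the second stage, so it does not help.

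The paper's proof avoids this trap by a case split on how many of the $v_i$ are {\fivefour}: if at least three are, it removes a carefully chosen non-adjacent pair from $G_v$, exploiting their degree-$4$ neighbors; if at most two are, it finds two \emph{adjacent} {\fivefive} vertices $x,y$ in $G_v$, uses $G_v,G_x,G_y\cong C_5$ to pin down the local structure, and removes a stable set of three vertices each having exactly two neighbors in $\{v,x,y\}$. In both cases the set removed is tailored to the configuration and yields a clean drop of $\tfrac{32}{3}$ or $16$ without any reduction step on a degree-$2$ vertex.
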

\begin{proof}
Suppose that $v\in V(G)$ is a {\fivefive} vertex. Thus, $G_{v} \cong C_{5}$ by Lemma~\ref{lemma:C5}.
First, assume that $G_{v}$ contains at least three vertices which are
{\fivefour} vertices in $G$.
Among them, there are two vertices $x$ and $y$ which are not adjacent (since $G_{v}$ has no triangle).
Moreover, $x$ and $y$ have at least two common neighbors with degree $5$ in $G$, namely
$v$ and some vertex in $V(G_{v}) - \{x,y\}$.
Combining these observations with the fact that
$x$ and $y$ each have a degree-4 neighbor (possibly the same vertex), one obtains
\begin{align*}
\phi(G) 
& \geq \phi(G - \{x,y\}) + 2\phi(5) + 2\big(\phi(5) - \phi(3) \big) + 2\frac23 + 4\frac12\\
& =\phi(G - \{x,y\}) + 10 +\frac23
\geq \frac{16}{3}s(G - \{x,y\}) - 1 + 10 +\frac23
\geq \frac{16}{3}s(G) - 1,
\end{align*}
a contradiction. Hence, there are at most two {\fivefour} vertices in $G_{v}$.

Now, there are two {\fivefive} vertices $x$ and $y$ which are adjacent in $G_{v}$.
Since $G_{v}, G_{x}$, and $G_{y}$ are all
isomorphic to $C_{5}$ by Lemma~\ref{lemma:C5}, the graph $G[Z]$ induced by the set $Z:= V(G_{v}) \cup
V(G_{x}) \cup V(G_{y})$ is as depicted in Figure~\ref{fig-two-adj-five-five}.

Let $S$ be the set consisting of the three vertices in $Z-\{v,x,y\}$ that have
exactly two neighbors in $\{v,x,y\}$.
\begin{figure}
\centering
\includegraphics[width = 0.3\textwidth]{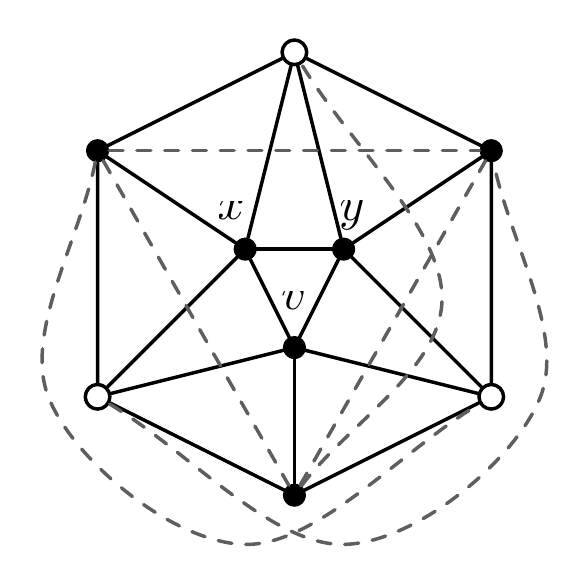}
\caption{\label{fig-two-adj-five-five}The subgraph of $G$ induced by $Z$. Dashed edges indicate edges
that may or may not be present in $G[Z]$; white vertices are the vertices in $S$.}
\end{figure}
The set $S$ is a stable set of $G$ and the graph $G-S$ is connected
(as follows from Lemma~\ref{lemma-3-cutsets}).
Let $a$ and $b$ be the number of vertices with respectively two and three neighbors in $S$.
(Thus there are exactly $a$ edges between $S$ and $V(G) - Z$.)
It follows
\begin{align*}
\phi(G) & \geq \phi(G - S) + 3\phi(5) + (3+a)\big(\phi(5) - \phi(3) \big) + b\big(\phi(5) - \phi(2) \big) + a\frac12 \\
& \geq \phi(G - S) +
3\phi(5) + 6\big(\phi(5) - \phi(3) \big) + 3\frac12 \\
& = \phi(G - S) + 16
\geq \frac{16}{3}s(G - S) - 1 + 16
\geq \frac{16}{3}s(G) - 1,
\end{align*}
which is again a contradiction.
\end{proof}

Our next aim is to prove that $G$ does not have {\fivefour} vertices either.
We follow an approach similar to the one used for {\fivefive} vertices.

A {\DEF diamond} in $G$ is an induced subgraph $H$ of $G$ isomorphic to $K_{4} - e$
(the graph $K_{4}$ minus an edge) with the property
that the two vertices with degree 2 in $H$ both have degree 5 in $G$.

\begin{lemma}
\label{lemma-5-4-diamond}
There is no diamond in $G$.
\end{lemma}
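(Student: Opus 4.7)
My plan is to take $S=\{a,b\}$, where $a,b$ are the two vertices of degree $2$ in the diamond $H$ (so $\deg_G(a)=\deg_G(b)=5$ and $a,b$ share the neighbors $c,d$ with $cd\in E(G)$), and derive a contradiction by showing $\phi(G)-\phi(G-S)\geq \frac{32}{3}$. Since $G$ is $3$-connected (Lemma~\ref{lemma-3-cutsets}), $G-S$ is connected, and clearly $s(G-S)\geq s(G)-2$; applying Theorem~\ref{prop-phi-2-fvs-connected}(\ref{prop-phi-2-fvs-connected-1st-part}) to $G-S$ would then yield $\phi(G)\geq \frac{16}{3}(s(G)-2)-1+\frac{32}{3}=\frac{16}{3}s(G)-1$, contradicting the assumption that $G$ is a counter-example.

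To estimate $\phi(G)-\phi(G-S)$, I would first use that $G$ has no {\fivefive} vertex to conclude that $a,b$ are both {\fivefour}: every vertex of $N(a)\cup N(b)$ has degree $4$ or $5$, and each of $a,b$ has exactly one neighbor of degree $4$. Since $a,b$ are non-adjacent, $|N(a)\cup N(b)|=10-k$ where $k:=|N(a)\cap N(b)|\geq 2$ (as $c,d$ are common neighbors). Upon removing $S$, each of the $k$ common neighbors loses $2$ in degree, so $\phi$ drops by at least $\phi(5)-\phi(3)=\tfrac{7}{6}$, with an additional $\phi(4)-\phi(2)-\tfrac{7}{6}=\tfrac{5}{6}$ for each such vertex of degree $4$; each of the $10-2k$ non-common neighbors loses $1$ in degree, so $\phi$ drops by at least $\phi(5)-\phi(4)=\tfrac{1}{2}$, with an extra $\phi(4)-\phi(3)-\tfrac{1}{2}=\tfrac{1}{6}$ for each such vertex of degree $4$. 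Letting $n_4^c$ and $n_4^{nc}$ denote the numbers of degree-$4$ vertices among the common and non-common neighbors of $a,b$, I would collect these contributions to obtain
$$\phi(G)-\phi(G-S) \;\geq\; 5 + \tfrac{7k}{6} + \tfrac{10-2k}{2} + \tfrac{5}{6}n_4^c + \tfrac{1}{6}n_4^{nc} \;=\; 10 + \tfrac{k}{6} + \tfrac{5}{6}n_4^c + \tfrac{1}{6}n_4^{nc}.$$

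I would finish by a short case analysis on where the unique degree-$4$ neighbors of $a$ and $b$ lie. If these two neighbors coincide, the common vertex lies in $N(a)\cap N(b)$, so $n_4^c\geq 1$ and $k\geq 2$, and the bound exceeds $\frac{32}{3}$. If they are distinct and at least one is common, then $n_4^c\geq 1$ and $k\geq 3$ (since $c,d$ together with the common degree-$4$ vertex all lie in $N(a)\cap N(b)$), again easily clearing $\frac{32}{3}$; similarly if both are common then $k\geq 4$ and $n_4^c=2$. If both are distinct and non-common, then $n_4^{nc}=2$, $k\geq 2$, and the bound evaluates to exactly $10+\tfrac{2}{6}+\tfrac{2}{6}=\tfrac{32}{3}$. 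This last tight case is the main obstacle: it is what forces us to choose $S=\{a,b\}$ rather than a single vertex, since neither $a$ nor $b$ alone delivers the required gain of $\tfrac{16}{3}$, whereas two non-adjacent degree-$5$ vertices with at least two common neighbors provide just enough ``common neighbor'' savings (the term $\tfrac{7k}{6}$ in place of $\tfrac{k}{2}$) to close the $\tfrac{1}{3}$ gap.
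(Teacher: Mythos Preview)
Your proof is correct and follows essentially the same approach as the paper: remove the two degree-$5$ vertices $a,b$ of the diamond, use $3$-connectivity to ensure $G-\{a,b\}$ is connected, and show the potential drop is at least $\tfrac{32}{3}$. The paper's bookkeeping is slightly more streamlined (it splits on whether $c$ or $d$ has degree $4$, rather than tracking $k$, $n_4^c$, $n_4^{nc}$), but the content is the same. One minor remark: your case ``distinct and at least one is common'' is actually vacuous, since if $w_a\in N(b)$ has degree $4$ then $w_a$ must equal $w_b$; this makes your $k\geq 3$ claim moot but does not affect correctness.
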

\begin{proof}
Suppose the contrary that $G$ contains a diamond $H$. Let $x, y$ be the two vertices with degree 2
in $H$. If some vertex of $V(H) \setminus \{x,y\}$ has degree 4 in $G$, then 
$$\phi(G) \geq \phi(G - \{x,y\}) +
2\phi(5) + \big(\phi(4) - \phi(2) \big) + \big(\phi(5) - \phi(3) \big) + 6\frac12
\geq \phi(G - \{x,y\}) + 10 +\frac23.$$
If, on the other hand, both vertices in $V(H) - \{x,y\}$ have degree 5 in $G$, then
\begin{align*}
\phi(G) \geq \phi(G - \{x,y\}) +
2\phi(5) + 2\big(\phi(5) - \phi(3) \big) + 2\frac23 + 4\frac12
= \phi(G - \{x,y\}) + 10 +\frac23.
\end{align*}
(Here, we use that $x$ and $y$ each have a degree-$4$ neighbor in $G$.)
Thus, in both cases,
\begin{align*}
\phi(G) \geq \phi(G - \{x,y\}) + 10 +\frac23
\geq \frac{16}{3}s(G - \{x,y\}) - 1 + 10 +\frac23
\geq \frac{16}{3}s(G) - 1,
\end{align*}
contradicting the fact that $G$ is a counter-example.
\end{proof}

\begin{lemma}
\label{lemma-5-4-stable-set}
Suppose $v\in V(G)$ is a {\fivefour} vertex and let $w$ be the unique neighbor
of $v$ of degree 4.
Then $w\in S$ for every stable set $S$ of $G_{v}$ of size $3$.
\end{lemma}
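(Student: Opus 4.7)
My plan is to argue by contradiction. Suppose $S \subseteq V(G_v) \setminus \{w\}$ is a stable set of size $3$ in $G_v$. Since the five neighbors of $v$ consist of $w$ (degree $4$) and four degree-$5$ vertices, every vertex of $S$ has degree $5$ in $G$. I will remove $S$ from $G$, show that $\phi(G) - \phi(G-S) \geq 16$, and apply Theorem~\ref{prop-phi-2-fvs-connected} to the residual graph to obtain
\[
\phi(G) \;\geq\; \tfrac{16}{3}\bigl(s(G)-3\bigr) - 1 + 16 \;=\; \tfrac{16}{3}s(G) - 1,
\]
the desired contradiction.

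First I verify that the induction hypothesis applies to $G-S$: this graph is strictly smaller than $G$, and it is connected because, by Lemma~\ref{lemma-3-cutsets}, every $3$-cutset of $G$ consists of vertices of degree $3$ or $4$, whereas $S$ contains only degree-$5$ vertices; trivially $s(G-S) \geq s(G) - 3$. I then split the potential drop into three parts. The vertices of $S$ contribute $3\phi(5) = \tfrac{15}{2}$. The vertex $v$ loses all three of its $S$-neighbors, so its degree drops from $5$ to $2$ and it contributes $\phi(5) - \phi(2) = \tfrac{5}{2}$. Each remaining vertex $u \in N_G(S) \setminus (S \cup \{v\})$ has degree at least $4$ by Lemma~\ref{lemma-no-neighbor-3-at-most-one-4} (it is a neighbor of some degree-$5$ vertex in $S$), and a direct case check using the piecewise definition of $\phi$ shows $\phi_G(u) - \phi_{G-S}(u) \geq k_u/2$ where $k_u = |N_G(u) \cap S|$. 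Since $S$ is stable in $G_v$, each vertex of $S$ has exactly $4$ neighbors outside $S \cup \{v\}$, so $\sum_u k_u = 12$ and this last contribution is at least~$6$.

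Adding up yields $\phi(G) - \phi(G-S) \geq \tfrac{15}{2} + \tfrac{5}{2} + 6 = 16$, as required. The main subtlety (and the chief, if modest, obstacle) is that the generic lower bound $\sum_{u \in S} \phi_G(u) + \tfrac12 e_G(S) = 15$ noted just before Lemma~\ref{lemma-5-4-diamond} falls one unit short. The missing unit is supplied by $v$ alone: its actual potential drop $\tfrac{5}{2}$ exceeds the generic per-edge estimate $\tfrac{1}{2}|N_G(v) \cap S| = \tfrac{3}{2}$ by exactly~$1$, which is what makes the argument barely go through.
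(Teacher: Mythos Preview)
Your proof is correct and follows essentially the same approach as the paper, which simply points back to the first part of the proof of Lemma~\ref{lemma-fivefive-no-stable-set-size-3}: remove the stable set $S$ of three degree-$5$ vertices, use Lemma~\ref{lemma-3-cutsets} for connectivity of $G-S$, and tally the potential drop as $3\phi(5) + (\phi(5)-\phi(2)) + 6 = 16$. Your closing remark about why the generic $\tfrac12 e_G(S)$ estimate needs the extra unit from $v$ is a nice piece of commentary, but the computation itself is identical to the paper's.
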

\begin{proof}
If there is a stable set of size $3$ in $G_v$ avoiding $w$, then we can reach a contradiction
exactly as in the first part
of the proof of Lemma~\ref{lemma-fivefive-no-stable-set-size-3}.
\end{proof}

Suppose that $v\in V(G)$ is a {\fivefour} vertex.
Here, we define a sparse bipartition of $G_{v}$ as a partition $(X,Y)$ of $V(G_{v})$
such that $|X|=3, |Y|=2$, $\edges{G[X]} = \edges{G[Y]} = 1$, as for {\fivefive} vertices,
{\em but} with the additional requirement that
$Y$ includes the unique neighbor of $v$ having degree 4 in $G$.

\begin{lemma}
\label{lemma-5-4-sparbip}
Suppose that $v\in V(G)$ is a {\fivefour} vertex.
Then $G_{v}$ does not admit a sparse bipartition.
\end{lemma}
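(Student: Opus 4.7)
The plan is to mirror the argument used in the preceding sparse-bipartition lemma for \fivefive{} vertices: the extra clause ``$w\in Y$'' built into the definition of a sparse bipartition for \fivefour{} vertices is tailored precisely so that the same proof goes through. I would proceed by contradiction. Suppose $(X,Y)$ is a sparse bipartition of $G_v$, so that the unique degree-$4$ neighbor $w$ of $v$ lies in $Y$ and, consequently, every vertex of $X$ is a degree-$5$ neighbor of $v$. Set $S:=X\cup\{v\}$: this is a $4$-vertex set in which every vertex has degree $5$ in $G$.

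The first step is connectivity. Since $X$ is a $3$-set consisting entirely of degree-$5$ vertices, Lemma~\ref{lemma-3-cutsets} prevents $X$ from being a $3$-cutset, so $G-X$ is connected. In $G-X$ the vertex $v$ has exactly the two vertices of $Y$ as neighbors, and these are adjacent because $\edges{G[Y]}=1$, making $v$ a degree-$2$ vertex lying in a triangle of $G-X$. Lemma~\ref{lemma-2fvs} then yields $s(G-S)=s(G-X)\geq s(G)-3$, and $G-S$ is still connected since the edge inside $Y$ keeps its two endpoints linked (each of them has further external neighbors in $G-S$).

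The second step is the potential estimate. Every vertex of $V(G)-S$ with a neighbor in $S$ has degree at least $4$ in $G$: the two neighbors of $v$ lying outside $S$ are exactly the vertices of $Y$, of degrees $4$ and $5$, and the external neighbors of any $x\in X$ have degree at least $4$ by Lemma~\ref{lemma-no-neighbor-3-at-most-one-4}. The observation preceding the lemma then applies with $\sum_{u\in S}\phi_G(u)=4\cdot\tfrac{5}{2}=10$ and $e_G(S)=4\cdot 5-2\cdot 4=12$ (since $G[S]$ contains exactly the three edges from $v$ to $X$ together with the single edge inside $X$), yielding
$$\phi(G)\geq\phi(G-S)+10+6=\phi(G-S)+16.$$
Applying Theorem~\ref{prop-phi-2-fvs-connected}(\ref{prop-phi-2-fvs-connected-1st-part}) to the smaller connected graph $G-S$ then gives $\phi(G)\geq\tfrac{16}{3}s(G)-1$, contradicting the assumption that $G$ is a counter-example.

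The main (and really only) obstacle is ensuring that $G-X$ is connected: if $w$ were allowed to lie in $X$, then $X$ could perfectly well be a $3$-cutset and the whole argument would collapse, since we would lose the bound $s(G-X)\geq s(G)-3$ and the standard potential estimate would no longer be available. The clause ``$w\in Y$'' in the definition of sparse bipartition for \fivefour{} vertices is precisely what rules this out, thereby aligning the mixed case with the pure case.
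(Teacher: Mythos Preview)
Your proof is correct and follows essentially the same approach as the paper: remove $S=X\cup\{v\}$, use Lemma~\ref{lemma-3-cutsets} (applicable because $X$ consists of degree-$5$ vertices) to get connectivity and $s(G-S)=s(G-X)\geq s(G)-3$, and then verify a potential drop of at least $16$. The only cosmetic difference is that you invoke the pre-stated ``$|S|\leq 4$'' observation directly to get a drop of exactly $16$, whereas the paper does a slightly finer count (tracking that every degree-$4$ external vertex sees at most three vertices of $S$) yielding $4\phi(5)+4\cdot\tfrac23+8\cdot\tfrac12=\tfrac{50}{3}$; both suffice.
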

\begin{proof}
Assume $(X, Y)$ is a sparse bipartition of $G_{v}$.
Since the two vertices in $Y$ are adjacent,
$s(G - S) = s(G - X) \geq s(G) - 3$, where $S:= X \cup\{v\}$.
Let $w$ be the vertex in $Y$ of degree $4$. Observe that $w$ sees at most three vertices in $S$, since $w$ is
adjacent to the other vertex in $Y$. Thus every degree-$4$ vertex in $G$ sees at most three vertices from $S$.
Since $G-S$ is connected, 
\begin{align*}
\phi(G) \geq \phi(G - S) +
4\phi(5) + 4\frac23 + 8\frac12
\geq \phi(G - S) + 16
\geq \frac{16}{3}s(G) - 1,
\end{align*}
a contradiction.
\end{proof}

\begin{lemma}
\label{lemma-5-4-at-most-one-neighbor}
Suppose that $v\in V(G)$ is a {\fivefour} vertex and let $w$ be the unique neighbor
of $v$ of degree 4. Then $w$ has degree at most $1$ in $G_{v}$.
\end{lemma}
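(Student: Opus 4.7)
I would argue by contradiction: suppose $d := d_{v}(w) \geq 2$. Since $\deg_{G} w = 4$ and $vw \in E(G)$, we immediately have $d \in \{2,3\}$. I would first use the structural lemmas of this section to show that $G$ must contain a $K_{5}$ through $v$, and then close via a potential-counting argument.

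\textbf{Forcing a $K_{5}$.} For any two distinct neighbors $x, y$ of $w$ in $G_{v}$, both have degree $5$ in $G$ (they are the non-$w$ neighbors of the \fivefour{} vertex $v$). If $xy \notin E(G)$, then $\{v,w,x,y\}$ induces $K_{4}-xy$, a diamond with $x, y$ as its degree-$2$ vertices (both of degree $5$ in $G$), contradicting Lemma~\ref{lemma-5-4-diamond}. Hence $xy \in E(G)$. If $d = 3$ with $N_{G_{v}}(w) = \{a,b,c\}$, this makes $\{a,b,c\}$ a triangle and so $\{v,w,a,b,c\}$ induces $K_{5}$. If $d = 2$ with $N_{G_{v}}(w) = \{a,b\}$, we have $ab \in E$ and $\{v,w,a,b\}$ induces $K_{4}$; letting $c, d$ be the other two neighbors of $v$, the same diamond principle applied to $\{v,a,b,x\}$ (for $x \in \{c,d\}$) forces $ax \in E \iff bx \in E$, and applied to $\{v,a,c,d\}$ forces $e_{ac}+e_{ad}+e_{cd} \neq 2$. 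Lemma~\ref{lemma-5-4-stable-set} applied to the candidate stable set $\{a,c,d\}$ (which avoids $w$) gives $e_{ac}+e_{ad}+e_{cd} \geq 1$, and Lemma~\ref{lemma-5-4-sparbip} applied with $Y = \{w,b\}$ and $X = \{a,c,d\}$ excludes $e_{ac}+e_{ad}+e_{cd} = 1$. Hence $e_{ac}+e_{ad}+e_{cd} = 3$, and by the symmetry $ax \iff bx$ also $bc, bd \in E$, so $\{v,a,b,c,d\}$ induces $K_{5}$.

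\textbf{Closing via potentials.} In both cases $G$ contains a $K_{5}$ through $v$ in which at least three vertices have degree $5$. Let $p, q$ be two such vertices and set $S := \{v,p,q\}$. Since $v, p, q$ all have degree $5$, Lemma~\ref{lemma-3-cutsets} implies $S$ is not a $3$-cutset, so $G - S$ is connected. With $|E(G[S])| = 3$, there are exactly $9$ edges from $S$ to $V(G) \setminus S$; a direct count (using Lemma~\ref{lemma-no-neighbor-3-at-most-one-4} to see that exterior neighbors of degree-$5$ vertices other than $w$ themselves have degree $5$) gives $\phi(G) - \phi(G - S) \geq 15/2 + 6 = 27/2$. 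Assuming $s(G - S) \geq s(G) - 2$, the induction hypothesis yields
\[
\phi(G) \;\geq\; \tfrac{16}{3}\bigl(s(G)-2\bigr) - 1 + \tfrac{27}{2} \;=\; \tfrac{16}{3}\,s(G) + \tfrac{11}{6},
\]
contradicting that $G$ is a counter-example.

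\textbf{Main obstacle.} The delicate remaining point is proving $s(G - S) \geq s(G) - 2$: otherwise some minimum transversal $T$ of $G$ would contain all of $S$, and I would derive a contradiction by an exchange argument --- swapping a vertex of $T \cap S$ for an appropriate vertex of the $K_{5} \setminus S$, using $s(K_{5}) = 2$ together with the $K_{4}$ on $\{v,w,a,b\}$ (in the $d=2$ case) to verify that all $K_{4}$ subdivisions remain covered after the swap, yielding a strictly smaller transversal of $G$.
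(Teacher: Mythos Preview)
Your structural analysis in the ``forcing a $K_5$'' part is correct, and using the diamond lemma to conclude $xy \in E(G)$ for any two neighbours $x,y$ of $w$ in $G_v$ is in fact a cleaner way to dispose of the $xy\notin E(G)$ case than the paper's direct potential count there. The subsequent case analysis establishing a $K_5$ through $v$ is also valid.

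However, your closing argument has a genuine gap. The bound $s(G-S)\geq s(G)-2$ is essential and your sketched exchange argument does not go through. Concretely, take the $d=2$ case with $S=\{v,c,d\}$ and suppose $T$ is a minimum transversal containing $S$ with $w,a,b\notin T$. Then $T-v$ is \emph{not} a transversal, since $\{v,w,a,b\}$ induces a $K_4$ in $G-(T-v)$. Swapping $v$ for some other $K_5$ vertex gives a set of the same size, not a smaller one; and replacing all of $S$ by the two remaining $K_5$ vertices $\{a,b\}$ need not yield a transversal either, because a $K_4$ minor may pass through the reinserted triangle $\{v,c,d\}$ and the external neighbours $w,c',d'$. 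Whether such a minor exists depends on the global structure of $G - T$, which you have no control over. So the obstacle you flag is real and not resolved by the hint you give.

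The paper closes differently and avoids this issue entirely. After showing (for any two neighbours $x,y$ of $w$ in $G_v$) that $xy\in E(G)$, it proves that $G[Q]$ is complete, where $Q=N_G(v)\setminus\{w\}$; this is slightly more than your $K_5$ in the $d=3$ case, where you only get a $K_5$ on $\{v,w,a,b,c\}$ rather than on $\{v\}\cup Q$. Once $G[Q]\cong K_4$, the two vertices $x,y\in Q$ adjacent to $w$ have all five of their neighbours in $N_G(v)\cup\{v\}$, while some vertex of $Q$ is not adjacent to $w$ and hence has a neighbour outside. Thus $Z:=N_G(v)\setminus\{x,y\}$ is a $3$-cutset separating $\{v,x,y\}$ from the rest of $G$; since $Z$ contains two degree-$5$ vertices of $Q$, this contradicts Lemma~\ref{lemma-3-cutsets}. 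No transversal bookkeeping or exchange is needed.
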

\begin{proof}
Arguing by contradiction,
assume $x, y$ are two distinct neighbors of $w$ in $G_{v}$.
If $xy \notin E(G)$, then we obtain
\begin{align*}
\phi(G) & \geq \phi(G - \{x,y\}) +
2\phi(5) + \big(\phi(5) - \phi(3) \big) + \big(\phi(4) - \phi(2) \big) + 6\frac12 \\
& \geq\phi(G - \{x,y\}) + 10 +\frac23
\geq \frac{16}{3}s(G - \{x,y\}) - 1 + 10 +\frac23
\geq \frac{16}{3}s(G) - 1,
\end{align*}
a contradiction.
Thus, we may assume $xy \in E(G)$.

Let $Q := N_{G}(v) - \{w\}$.
We have $\alpha(G[Q]) \leq 2$ by Lemma~\ref{lemma-5-4-stable-set}.
Also, if $\edges{G[X]}=2$ for some $X \subseteq Q$ with $|X|=3$, then
the subgraph of $G$ induced by $X \cup \{v\}$ is a diamond of $G$,
which Lemma~\ref{lemma-5-4-diamond} forbids.
On the other hand, if we had $\edges{G[X]}=1$ for some $X \in \left\{ Q - \{x\},
Q - \{y\}\right\}$, then $(X, N_{G}(v) - X)$ would be a sparse bipartition of
$G_{v}$, which would contradict Lemma~\ref{lemma-5-4-sparbip}.

Since $xy \in E(G)$, it follows from the previous observations that $G[Q]$ is a complete graph.
In particular, $x$ and $y$ have no neighbor outside $N_{G}(v) \cup \{v\}$.
Notice also that some vertex in $Q$ is not adjacent to $w$, and thus has a neighbor outside
$N_{G}(v) \cup \{v\}$. It follows that $Z:=N_{G}(v) - \{x,y\}$ separates $\{v, x, y\}$ from the rest
of the graph.
Since $|Z|=3$ and $Z$ includes a vertex of degree 5, this contradicts
Lemma~\ref{lemma-3-cutsets}.
\end{proof}

\begin{lemma}
\label{lemma-5-4-two-graphs}
Suppose that $v\in V(G)$ is a {\fivefour} vertex and let $w$ be the unique neighbor
of $v$ with degree 4.
Then $G_{v}$ is one of the two graphs depicted in Figure~\ref{fig-five-four}.
\end{lemma}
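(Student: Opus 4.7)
The plan is to combine the local constraints already established by the previous lemmas to narrow the possibilities for $G_v$ down to a short explicit list, and then discard the remaining superfluous cases using the $3$-cutset restriction of Lemma~\ref{lemma-3-cutsets}. Let $Q := N_G(v) - \{w\}$, a set of four degree-$5$ vertices.

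First I would collect the local restrictions. Lemma~\ref{lemma-5-4-at-most-one-neighbor} gives $\deg_{G_v}(w) \in \{0, 1\}$, naturally splitting the analysis into two cases. Lemma~\ref{lemma-5-4-stable-set} implies $\alpha(G[Q]) \leq 2$, since any stable set of size $3$ inside $Q$ would be a size-$3$ stable set of $G_v$ avoiding $w$. Lemma~\ref{lemma-5-4-diamond}, applied to the prospective diamond $\{v\} \cup X$ for a $3$-subset $X \subseteq Q$, forbids $\edges{G[X]} = 2$: the two degree-$2$ vertices of such a diamond would both lie in $Q$ and hence have degree $5$. Combining the last two observations, every $3$-subset of $Q$ induces either exactly one edge or exactly three edges.

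A routine enumeration of $4$-vertex graphs satisfying this last condition shows that $G[Q]$ must be one of $K_4$, $K_3 \cup K_1$, or $2K_2$. In the case where $w$ has a unique neighbor $u \in Q$, Lemma~\ref{lemma-5-4-sparbip} applied to the bipartition $(Q - \{u\}, \{w, u\})$ of $V(G_v)$ rules out $\edges{G[Q - \{u\}]} = 1$; this immediately removes $2K_2$ from the list (any choice of $u$ leaves behind a single edge) and forces $u$ to be the isolated vertex of $G[Q]$ whenever $G[Q] = K_3 \cup K_1$. What remains is a short list of at most five candidate structures for $G_v$.

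The main obstacle is now to eliminate the extraneous candidates so that exactly two remain. My plan is to proceed in the style of the closing argument in the proof of Lemma~\ref{lemma-5-4-at-most-one-neighbor}: in each superfluous case I would exhibit a $3$-vertex cutset of $G$ that contains a degree-$5$ vertex, in direct violation of Lemma~\ref{lemma-3-cutsets}. When $G[Q] = K_4$, the clique on $\{v\} \cup Q$ forces several neighbors of $v$ to have all their edges inside $\{v, w\} \cup Q$, so that three suitably chosen vertices of $N_G(v)$ separate the local cluster from the rest of $G$ and yield the required contradiction. Completing the case analysis in this spirit narrows the possibilities down to exactly the two graphs depicted in Figure~\ref{fig-five-four}.
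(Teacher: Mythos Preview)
Your reduction to the five candidate structures for $G_v$ is correct and matches the paper's approach. The gap is in the final step: the $3$-cutset argument you propose for eliminating the superfluous cases does not go through. Take for instance the case $G[Q]\cong K_4$ with $\deg_{G_v}(w)\leq 1$. Each vertex of $Q$ has degree~$5$, with four of its neighbors in $\{v\}\cup Q$; since $w$ is adjacent to at most one vertex of $Q$, at least three vertices of $Q$ send their fifth edge \emph{outside} $\{v,w\}\cup Q$, and $w$ itself sends at least two edges outside. Thus at most one vertex of $N_G(v)$ has all its edges inside $\{v,w\}\cup Q$, and there is no evident way to isolate a piece of the local cluster behind a $3$-cutset containing a degree-$5$ vertex. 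The closing argument of Lemma~\ref{lemma-5-4-at-most-one-neighbor} worked precisely because there $w$ had \emph{two} neighbors $x,y$ in $Q$, making both fully internal; once that lemma is in force, this configuration no longer occurs. The same issue arises in the case $\deg_{G_v}(w)=0$ with $G[Q]\cong K_3\cup K_1$: no vertex of $N_G(v)$ is fully internal, so no $3$-cutset presents itself.

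The paper instead eliminates these cases by direct potential computations. For $G[Q]\cong K_4$ it removes $S=\{v,w,x,y\}$ where $x,y\in Q$ are non-neighbors of $w$, and checks $\phi(G)\geq\phi(G-S)+16$ together with $s(G-S)\geq s(G)-3$ (using that $v$ has degree~$2$ in a triangle in $G-\{w,x,y\}$). For $\deg_{G_v}(w)=0$ and $G[Q]\cong K_3\cup K_1$ it removes $S=X\cup\{v\}$ for a stable set $X$ of size~$3$ in $G_v$ containing $w$, and again obtains $\phi(G)\geq\phi(G-S)+16$. You should replace the cutset sketch by these two potential arguments.
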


\begin{figure}[h]
\center
\includegraphics[width=0.36\textwidth]{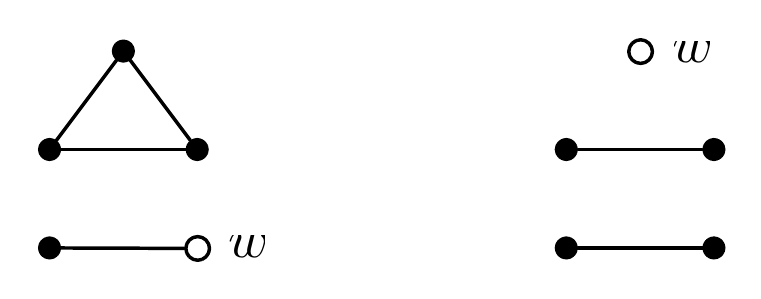}
\caption{\label{fig-five-four}Two possibilities for the graph $G_{v}$
when $v$ is a {\fivefour} vertex.}
\end{figure}

\begin{proof}
Let $Q := N_{G}(v) - \{w\}$.
Lemmas~\ref{lemma-5-4-diamond} and~\ref{lemma-5-4-stable-set} imply that
$G[Q]$ has at most two components and that each component is a complete graph.
Hence, $G[Q]$ is isomorphic to $K_{4}$, $K_{3} \cup K_{1}$, or $K_{2} \cup K_{2}$.
(As expected, $K_t \cup K_{\ell}$ denotes the disjoint union of $K_t$ and $K_{\ell}$.)

First, assume $G[Q] \cong K_{4}$, and let $x,y$ be two distinct vertices of $Q$
such that $wx\notin E(G)$ and $wy\notin E(G)$ (such vertices exist by Lemma~\ref{lemma-5-4-at-most-one-neighbor}).
Let $S:= \{v,w,x,y\}$.
Then $s(G - S) = s(G - \{w,x,y\}) \geq s(G) - 3$. Using
that each of $x$ and $y$ has a neighbor outside $N_{G}(v) \cup \{v\}$,
that $w$ has at least two neighbors outside $N_{G}(v) \cup \{v\}$
(cf.\ Lemma~\ref{lemma-5-4-at-most-one-neighbor}) and, as usual,
that $G-S$ is connected, 
\begin{align*}
\phi(G) & \geq \phi(G - S) +
3\phi(5) + \phi(4) + 2\big(\phi(5) - \phi(2) \big) + 4\frac12\\
& \geq\phi(G - S) + 16
\geq \frac{16}{3}s(G - S) - 1 + 16
\geq \frac{16}{3}s(G) - 1,
\end{align*}
a contradiction. Thus $G[Q]$ is not isomorphic to $K_{4}$.

Recall that $w$ has degree at most $1$ in $G_v$, by Lemma~\ref{lemma-5-4-at-most-one-neighbor}.
Suppose that $w$ has degree 1 in $G_{v}$, and let $z$ be its unique neighbor.
We cannot have $\edges{G[N_{G}(v) - \{w,z\}]}=1$, since otherwise
$(N_{G}(v) - \{w,z\},\{w,z\})$ would be a sparse bipartition of $G_{v}$. It follows
that $G[Q]$ is isomorphic to $K_{3} \cup K_{1}$ and that $z$ is the isolated vertex
of that graph. Hence, $G_{v}$ is isomorphic to the left graph
in Figure~\ref{fig-five-four}.

Now, assume $w$ has degree 0 in $G_{v}$. Suppose $G[Q] \cong K_{3} \cup K_{1}$,
and let $X$ be a stable set of $G_{v}$ with $|X|=3$ (thus $w \in X$).
Let $S := X \cup \{v\}$. Similarly as before, we deduce
\begin{align*}
\phi(G) 
& \geq \phi(G - S) +
3\phi(5) + \phi(4) + 2\big(\phi(5) - \phi(3)\big) + 9\frac12\\
& \geq \phi(G - S) + 16
\geq \frac{16}{3}s(G - S) - 1 + 16
\geq \frac{16}{3}s(G) - 1,
\end{align*}
a contradiction. Thus $G[Q]\cong K_{2} \cup K_{2}$, and $G_{v}$ is
isomorphic to the graph on the right in Figure~\ref{fig-five-four}.
\end{proof}

\begin{lemma}
\label{lemma-cycle-deg-5}
If $G$ has maximum degree $5$, then
there exists an almost induced even cycle $C$ in $G$
such that every vertex in $C$ has degree 5.
\end{lemma}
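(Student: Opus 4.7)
The plan is to apply Lemma~\ref{lem-cycle} to the subgraph $H$ of $G$ induced by the degree-$5$ vertices. Since $G$ has no \fivefive vertex by the previous lemma, every degree-$5$ vertex has exactly four degree-$5$ neighbors, so $H$ is $4$-regular, and in particular has minimum degree at least $3$. Moreover, any edge of $G$ whose both endpoints are degree-$5$ vertices is already an edge of $H$, so a chord of a cycle contained in $H$ is a chord in $G$ if and only if it is a chord in $H$. Consequently, an almost induced even cycle in $H$ is automatically an almost induced even cycle of $G$ whose vertices all have degree $5$, which is precisely the assertion of the lemma.

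To apply Lemma~\ref{lem-cycle} to $H$ it suffices to check that $H$ contains no $K_{4}$ subgraph, and I would argue this by contradiction. Suppose $\{a,b,c,d\}$ induces a $K_{4}$ in $H$. Each of $a,b,c,d$ is a \fivefour vertex (since \fivefive vertices have been ruled out), so Lemma~\ref{lemma-5-4-two-graphs} describes each of $G_{a}, G_{b}, G_{c}, G_{d}$. Because $\{b,c,d\}$ forms a triangle in $G_{a}$ and the right graph of Figure~\ref{fig-five-four} is triangle-free, $G_{a}$ must be the left graph of Figure~\ref{fig-five-four}; by symmetry the same holds for $b,c,d$. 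This yields, for each $v\in\{a,b,c,d\}$, a ``$K_{1}$ vertex'' $u_{v}$ of degree $5$ together with the degree-$4$ neighbor $w_{v}$ of $v$, such that $u_{v}\sim w_{v}$ and neither $u_{v}$ nor $w_{v}$ is adjacent to the other three vertices of the $K_{4}$. A short check shows that the eight auxiliary vertices $u_{a},u_{b},u_{c},u_{d},w_{a},w_{b},w_{c},w_{d}$ are pairwise distinct and disjoint from $\{a,b,c,d\}$.

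The contradiction would then be extracted through a potential-drop calculation: I would remove a carefully chosen subset $S\subseteq\{a,b,c,d,u_{a},u_{b},u_{c},u_{d}\}$ from $G$, bound $\phi(G)-\phi(G-S)$ using the pinned-down degrees of the vertices adjacent to $S$, apply Theorem~\ref{prop-phi-2-fvs-connected} to $G-S$ (invoking part~(\ref{prop-phi-2-fvs-connected-2nd-part}) whenever $G-S$ turns out to be reduced but not $3$-connected), and conclude $\phi(G)\geq\frac{16}{3}s(G)-1$, contradicting the fact that $G$ is a counter-example to part~(\ref{prop-phi-2-fvs-connected-1st-part}) of that theorem. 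The main obstacle is precisely this final inequality: the naive choice $S=\{a,b,c,d\}$ already uses the whole local structure but produces a $\phi$-drop of only $\frac{50}{3}$, short of the $\frac{64}{3}$ that would be required; closing the gap forces one to exploit that Lemma~\ref{lemma-5-4-two-graphs} applied to $u_{a}$ gives a second $K_{4}$ on $\{u_{a},p,q,r\}$ disjoint from $\{a,b,c,d\}$, and that iterating this cascade yields either extra disjoint $K_{4}$s improving the trivial bound $s(G-S)\geq s(G)-|S|$, or else a small cutset to which part~(\ref{prop-phi-2-fvs-connected-2nd-part}) of Theorem~\ref{prop-phi-2-fvs-connected} applies. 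Once $H$ is known to contain no $K_{4}$ subgraph, Lemma~\ref{lem-cycle} supplies a shortest even cycle of $H$ which is almost induced in $H$, and the opening observation then yields the cycle required by the lemma.
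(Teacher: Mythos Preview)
Your plan has a genuine gap: the claim that the degree-$5$ induced subgraph $H$ contains no $K_4$ is simply false, so no potential-drop argument can possibly yield the contradiction you are looking for. Indeed, take any \fivefour\ vertex $v$ whose neighbourhood $G_v$ is the \emph{left} graph in Figure~\ref{fig-five-four}. Then the four degree-$5$ neighbours of $v$ induce $K_3\cup K_1$, and $v$ together with the three vertices of that $K_3$ form a $K_4$ inside $H$. Nothing proved earlier in the paper excludes type-$1$ vertices, so you cannot hope to derive a contradiction from the existence of such a $K_4$; your hand-waved cascade at the end is chasing a statement that is not true.

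The paper's proof takes a different route precisely because of this. It first observes that the type of a \fivefour\ vertex is inherited by its degree-$5$ neighbours, so the type-$1$ and type-$2$ vertices induce separate $4$-regular graphs $H_1$ and $H_2$. If some type-$2$ vertex exists, then $H_2$ is $K_4$-free (since $K_2\cup K_2$ is triangle-free) and Lemma~\ref{lem-cycle} applies to $H_2$ exactly as you intended. If all degree-$5$ vertices are type~$1$, the paper does \emph{not} try to avoid $K_4$s; instead it exploits that the copies of $K_4$ in $H_1$ are pairwise disjoint and cover $V(H_1)$, contracts each copy to a single vertex to obtain a $4$-regular multigraph $\widetilde H_1$, takes any induced cycle $\widetilde C$ there, and lifts it to an induced (hence almost induced) even cycle of length $2|\widetilde C|$ in $H_1$. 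This contraction trick is the missing idea in your proposal.
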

\begin{proof}
Following Lemma~\ref{lemma-5-4-two-graphs},
a vertex $v$ of $G$ is said to be of {\DEF type 1}
({\DEF type 2}) if $v$ is a {\fivefour} vertex and
$G_{v}$ is isomorphic to the left (resp.\ right) graph in Figure~\ref{fig-five-four}.

Let $H_{i} \subseteq G$ be the subgraph of $G$ induced by the set of vertices of type $i$,
for $i=1,2$. Notice that, if $v \in V(G)$ is of type $i$, then
so are the four neighbors of $v$ that have degree 5 in $G$. Thus $H_{i}$
is either $4$-regular or empty, for $i=1,2$.

If there is a type 2 vertex in $G$, then
$H_{2}$ is $4$-regular and has no subgraph isomorphic to $K_4$.
Using Lemma~\ref{lem-cycle} on $H_{2}$, we obtain a cycle $C$ as desired.
Thus, we may assume that every degree-5 vertex in $G$ is of type 1.
In particular, there is at least one such vertex.

Now, the graph $H_{1}$ is $4$-regular, and every vertex of
$H_{1}$ is contained in exactly one copy of $K_{4}$.
More precisely, all copies of $K_4$ in $H_1$ are pairwise vertex-disjoint, they cover all of $V(H_1)$, and
each copy sends exactly four edges to other copies of $K_4$ in $H_1$.
Observe also that the set of edges of $H_1$ that link two distinct copies of $K_4$ form a perfect matching of $H_1$.

Let $\widetilde H_{1}$ be the multigraph
obtained by contracting each copy of $K_{4}$ into one vertex
(parallel edges between distinct vertices are kept but loops are removed).
It follows from the previous observation that $\widetilde H_{1}$ is $4$-regular.
Let $\widetilde C$ be any induced cycle of $\widetilde H_{1}$
(note that a cycle of length 2 is allowed).
The cycle $\widetilde C$ naturally corresponds to an induced cycle $C$
of $H_{1}$ having length $2|\widetilde C|$. This latter cycle is as desired:
$C$ has even length, is induced (and thus almost induced) in $G$,
and contains only vertices of degree 5.
\end{proof}

\begin{lemma}
\label{lemma-no-deg-5}
$G$ has maximum degree at most 4.
\end{lemma}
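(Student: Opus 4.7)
Suppose for contradiction that $G$ has a vertex of degree $5$. Since the previous lemma rules out \fivefive~vertices, every degree-$5$ vertex of $G$ is \fivefour, and Lemma~\ref{lemma-cycle-deg-5} produces an almost induced even cycle $C$ with every vertex of degree $5$. Write $k:=|C|/2$. Any chord of $C$ must join two vertices of the same parity along $C$ (since it splits $C$ into two odd cycles), so one of the two natural colour classes of $V(C)$ is a stable set of $G$; let $S$ be this class and set $T:=V(C)\setminus S$.

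The plan is to establish
\[
\phi(G)-\phi(G-S)\;\geq\;\tfrac{16}{3}k,
\]
which combined with $s(G-S)\geq s(G)-k$ and Theorem~\ref{prop-phi-2-fvs-connected}\eqref{prop-phi-2-fvs-connected-1st-part} applied to (the components of) $G-S$ yields $\phi(G)\geq\tfrac{16}{3}s(G)-1$, contradicting the choice of $G$. For $u\notin S$ with $a_u:=|N_G(u)\cap S|\geq 1$, Lemma~\ref{lemma-no-neighbor-3-at-most-one-4} gives $\deg_G(u)\geq 4$, so Lemma~\ref{lemma-potential}\eqref{p-a-12} yields $\phi_G(u)-\phi_{G-S}(u)\geq\tfrac12 a_u$; define $\epsilon_u:=(\phi_G(u)-\phi_{G-S}(u))-\tfrac12 a_u\geq 0$. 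Since $S$ is stable with $e_G(S)=5k$ and each $v\in S$ contributes $\phi_G(v)=\tfrac{5}{2}$, the baseline computation gives $\phi(G)-\phi(G-S)\geq 5k+\sum_u\epsilon_u$, so it suffices to prove $\sum_u\epsilon_u\geq\tfrac{k}{3}$.

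A first $\tfrac{k}{6}$ comes from $T$: each $t\in T$ has $a_t=2$ (its two cycle-neighbours lie in $S$) and degree $5$, so $\epsilon_t=\phi(5)-\phi(3)-1=\tfrac{1}{6}$. For the remaining $\tfrac{k}{6}$ I exploit that each $v\in S$ is \fivefour, with a unique degree-$4$ neighbour $w_v$ necessarily off $C$. At an off-cycle degree-$4$ vertex $u$ with $a_u\in\{1,2,3,4\}$ one checks $\epsilon_u\in\{\tfrac{1}{6},1,\tfrac{1}{2},0\}$ respectively, and, letting $n_i$ denote the number of such $u$ with $a_u=i$, the constraint $n_1+2n_2+3n_3+4n_4=k$ gives the identity $\sum_u\epsilon_u=(k+4n_2-4n_4)/6$. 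Thus the degree-$4$ contribution is at least $\tfrac{k}{6}$ precisely when $n_2\geq n_4$, which is automatic whenever $n_4=0$.

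The main obstacle is the residual case $n_4\geq 1$, i.e.\ some off-cycle degree-$4$ vertex $u$ has all four neighbours in $S$. Stability of $S$ means these four $S$-neighbours of $u$ are pairwise non-adjacent, so Lemma~\ref{lemma-5-4-at-most-one-neighbor} forces each of them to be of type~$2$ in the terminology of the proof of Lemma~\ref{lemma-cycle-deg-5}: $G_v\cong K_1\cup(K_2\cup K_2)$ with $u$ the isolated vertex. The plan is to exploit this rigid pairing structure---each of $v$'s two cycle-neighbours is paired inside $K_2\cup K_2$ with one of $v$'s two off-cycle degree-$5$ neighbours---and to argue, by following these pairings across the four $S$-vertices attached to $u$, that enough coincidences among the off-cycle degree-$5$ neighbours of $S$ are forced to recover the missing $\tfrac{2}{3}$ of slack from $D_5$ (each coincidence at a degree-$5$ vertex with $a=2$ supplies $\epsilon=\tfrac{1}{6}$, each with $a\geq 3$ supplies at least $1$). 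Finally, if $G-S$ is disconnected, apply Theorem~\ref{prop-phi-2-fvs-connected}\eqref{prop-phi-2-fvs-connected-1st-part} to each component separately; by Lemma~\ref{lemma-3-cutsets} no $3$-subset of $S$ can be a cutset of $G$ (all of $S$ has degree $5$), and the three off-cycle edges retained by every $t\in T$ in $G-S$ anchor the components, so the number of components is small and the cumulative $-1$ penalties are absorbed by the same surplus $\epsilon$ that resolves the $n_4\geq 1$ case. This completes the contradiction and establishes that $G$ has maximum degree at most $4$.
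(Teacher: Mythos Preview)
Your approach diverges from the paper's in a substantive way, and the two places where you wave your hands are exactly the places where the argument is genuinely hard.

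You try to remove the whole stable half $S$ of $C$ at once and then account for the potential drop. The paper does \emph{not} do this. Instead it removes $S$ incrementally: it defines $S_j=\{v_1,v_3,\dots,v_{2j-1}\}$ and takes the largest $j$ for which $G-S_j$ is still $3$-connected and the off-cycle neighbours of $S_j$ are all distinct (precisely: $|X^4_j|=j$ and $|X^5_j|=2j$). If $j=k$ the computation is clean and gives exactly $\tfrac{16}{3}k$. If $j<k$, the paper analyses $G-S_{j+1}$: either a shared off-cycle neighbour appears (giving an extra $\tfrac16$ of slack), or $G-S_{j+1}$ is $2$-connected but not $3$-connected. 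In the latter case the potential drop is only $(j+1)\tfrac{16}{3}-\tfrac16$, which is $\tfrac16$ short of what is needed for part~(\ref{prop-phi-2-fvs-connected-1st-part}) alone. This shortfall is recovered by showing $G-S_{j+1}$ is reduced and invoking part~(\ref{prop-phi-2-fvs-connected-2nd-part}) of Theorem~\ref{prop-phi-2-fvs-connected}, which gives a full $+1$ instead of $-1$. The paper explicitly remarks that this step ``relies crucially on the stronger hypothesis in the reduced but not $3$-connected case.''

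Your sketch never invokes part~(\ref{prop-phi-2-fvs-connected-2nd-part}), and the two gaps you leave are real. First, the case $n_4\geq 1$: you correctly deduce that the four $S$-neighbours of such a $u$ are type~$2$, but ``the plan is to argue that enough coincidences are forced'' is not an argument; nothing you have written rules out the off-cycle degree-$5$ neighbours of $S$ being pairwise distinct (giving zero surplus from that side) while simultaneously $n_4>n_2$. Second, when $G-S$ is disconnected you propose to apply part~(\ref{prop-phi-2-fvs-connected-1st-part}) to each component, incurring a $-1$ per component. Your appeal to Lemma~\ref{lemma-3-cutsets} only says that no three vertices of $S$ form a cutset; it does not bound the number of components of $G-S$ when $|S|=k$ can be large, and ``the cumulative $-1$ penalties are absorbed by the same surplus~$\epsilon$'' is asserted, not proved. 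The paper's incremental strategy is designed precisely to avoid both difficulties: by stopping at the first $j$ where either coincidence or loss of $3$-connectivity occurs, it isolates a single controlled defect and pays for it with part~(\ref{prop-phi-2-fvs-connected-2nd-part}).
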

\begin{proof}
Arguing by contradiction, we assume that $G$ has maximum degree $5$.
Let $C$ be a cycle of $G$ as in Lemma~\ref{lemma-cycle-deg-5}.
Enumerate the vertices of
$C$ in order as $v_{1}, v_{2}, \ldots, v_{2k}$ so that $S:=\{v_{1}, v_{3}, \dots, v_{2k-1}\}$
is a stable set of $G$. For $i\in \{1, 2, \dots, k\}$,
let $S_{i}:=\{v_{1}, v_{3}, \dots, v_{2i-1}\}$, and define $X^{4}_{i}$ ($X^{5}_{i}$)
as the set of vertices in $V(G) - V(C)$ of degree 4 (resp.\ degree 5)
that have a neighbor in $S_{i}$.
Let $j$ be the largest index such that
the following three properties hold:
\begin{itemize}
\item $G-S_{j}$ is $3$-connected;
\item $|X^{4}_{j}|=j$, and
\item $|X^{5}_{j}|=2j$.
\end{itemize}
Note that $|X^{4}_{1}|=1$, $|X^{5}_{1}|=2$, and $G-S_{1}$ is $3$-connected by
Lemma~\ref{lemma-3-cutsets}; thus, $j$ is well defined.
We distinguish two cases, depending on whether $j=k$ or $j < k$.

{\bf Case 1.} $j=k$: We have $s(G - S) \geq s(G) - k$. Since
$|X^{4}_{k}|=k$ and $|X^{5}_{j}|=2k$, no two vertices in $S$ ($=S_{k}$) have a common neighbor
in $V(G) - V(C)$. Thus, every vertex in $V(G) - V(C)$ has at most one neighbor
in $S$. Also, every vertex in $V(C)-S$ has exactly two neighbors in $S$, because
$C$ is almost induced. Since $G-S$ is connected, 
\begin{align*}
\phi(G) & = \phi(G - S) +
k\phi(5) + k\big(\phi(5) - \phi(3)\big) + k\big(\phi(4) - \phi(3)\big) + 2k\big(\phi(5) - \phi(4)\big)\\
& = \phi(G - S) + k\frac{16}{3}
 \geq \frac{16}{3}s(G - S) - 1 + k\frac{16}{3}
\geq \frac{16}{3}s(G) - 1,
\end{align*}
contradicting the fact that $G$ is a counter-example.

{\bf Case 2.} $j < k$: Here, we consider the set $S_{j+1}$.
Exactly $j$ vertices in $V(C)-S_{j+1}$ have two neighbors
in $S_{j+1}$, and exactly two have one. Let $G' :=G - S_{j+1}$.
For $\ell \in \{4,5\}$, let
$$
\Delta^{\ell}_{j+1} := \sum_{v\in X^{\l}_{j+1}}\big(\phi_{G}(v) - \phi_{G'}(v)\big).
$$

We have
\begin{align*}
\phi(G) & = \phi(G') +
(j+1)\phi(5) + j\big(\phi(5) - \phi(3)\big) + 2\big(\phi(5) - \phi(4)\big)
+ \Delta^{4}_{j+1} + \Delta^{5}_{j+1}\\
& = \phi(G') + (j+1)\frac{16}{3} - \frac16
+ \left(\Delta^{4}_{j+1} - (j+1)\frac23\right)
+ \left(\Delta^{5}_{j+1} - 2(j+1)\frac12\right).
\end{align*}

Observe that, by our choice of $j$, every vertex in $X^{4}_{j+1}$ and
$X^{5}_{j+1}$ has at most two neighbors in $S_{j+1}$.
Since
$$
\phi(4) - \phi(2) \geq 2\big(\phi(4) - \phi(3)\big) + \frac16
$$
and
$$
\phi(5) - \phi(3) = 2\big(\phi(5) - \phi(4)\big) + \frac16,
$$
we have
$$
\Delta^{4}_{j+1} \geq \left\{
\begin{array}{lll}
(j+1)\frac23 + \frac16 & & \textrm{if } |X^{4}_{j+1}| < j +1\\[1ex]
(j+1)\frac23 & & \textrm{otherwise},
\end{array}
\right.
$$
and
$$
\Delta^{5}_{j+1} \geq \left\{
\begin{array}{lll}
2(j+1)\frac12 + \frac16 & & \textrm{if } |X^{5}_{j+1}| < 2(j +1),\\[1ex]
2(j+1)\frac12 & & \textrm{otherwise}.
\end{array}
\right.
$$
Thus, if $|X^{4}_{j+1}| < j +1$ or $|X^{5}_{j+1}| < 2(j +1)$,
since $s(G') \geq s(G) - (j+1)$ and that $G'$ is connected, 
\begin{align*}
\phi(G) \geq \phi(G') + (j+1)\frac{16}{3}
\geq \frac{16}{3}s(G') - 1 + (j+1)\frac{16}{3}
\geq \frac{16}{3}s(G) - 1,
\end{align*}
a contradiction.
Therefore, $|X^{4}_{j+1}| = j +1$ and $|X^{5}_{j+1}| = 2(j +1)$.
By definition of $j$, this implies that $G'$ is $2$-connected but not $3$-connected.
Note also that $\Delta^4_{j+1} = (j+1)\frac23$ and
$\Delta^5_{j+1} = 2(j+1)\frac12$, implying
$$
\phi(G) = \phi(G') + (j+1)\frac{16}{3} - \frac16.
$$

Every vertex of $C$ has degree 5 in $G$
and every vertex in $V(G) - V(C)$ has at most one neighbor
in $S_{j+1}$. Moreover, no degree-3 vertex in $V(G)-V(C)$ has a neighbor in $C$.
It follows that $G'$ has minimum degree $3$.

Suppose $s(G') = s(G' - e)$ for some edge $e\in E(G')$. Then
$\phi(G') \geq \phi(G' - e) + 1$ (since $G'$ has minimum degree at least $3$),
and since $G'-e$ is connected, 
\begin{align*}
\phi(G) = \phi(G') + (j+1)\frac{16}{3} - \frac16
\geq \phi(G' - e) + (j+1)\frac{16}{3} - \frac16 + 1
\geq \frac{16}{3}s(G) - 1,
\end{align*}
a contradiction. Hence, the graph $G'$ is critical,
which in turn implies that $G'$ is reduced.

Now, since $G'$ is reduced but not $3$-connected, we may apply
Theorem~\ref{prop-phi-2-fvs-connected} (\ref{prop-phi-2-fvs-connected-2nd-part})
on $G'$, yielding
\begin{align*}
\phi(G) \geq \phi(G') + (j+1)\frac{16}{3} - \frac16
\geq \frac{16}{3}s(G') + (j+1)\frac{16}{3} - \frac16
\geq \frac{16}{3}s(G) - 1,
\end{align*}
which is again a contradiction.
\end{proof}

Note that the last paragraph of the above proof relies crucially on the stronger hypothesis
in the reduced but not $3$-connected case.

Lemma~\ref{lemma-no-deg-5} concludes the heart of the proof, namely
showing that there is no vertex of degree $5$ in $G$. Now,
it only remains to deal with vertices of degree $3$ and $4$, which is fairly easy in comparison.

\begin{lemma}
$G$ is $4$-regular and has no subgraph isomorphic to $K_4$.
\end{lemma}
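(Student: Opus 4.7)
The lemma claims two things: $G$ has no vertex of degree~$3$ (so combined with Lemma~\ref{lemma-no-deg-5}, $G$ is $4$-regular), and $G$ contains no subgraph isomorphic to $K_4$. The plan is to tackle these separately, in that order.

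For the degree-$3$ part, suppose $v$ has degree~$3$. I split on the neighbor degrees. If all three neighbors of $v$ have degree~$3$, I remove $v$ itself: $G - v$ is connected (since $G$ is $3$-connected by Lemma~\ref{lemma-3-cutsets}), and the potential drop is exactly $\phi(3) + 3(\phi(3) - \phi(2)) = \frac{4}{3} + 4 = \frac{16}{3}$. Combined with $s(G - v) \geq s(G) - 1$ and Theorem~\ref{prop-phi-2-fvs-connected}(\ref{prop-phi-2-fvs-connected-1st-part}) applied to $G - v$, this gives $\phi(G) \geq \frac{16}{3} s(G) - 1$, contradicting the counter-example assumption. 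Otherwise, some neighbor $a$ of $v$ has degree~$4$, and I remove $a$ instead; the potential drop becomes $\phi(4) + (\phi(3) - \phi(2)) + \sum_{u \in N(a) \setminus \{v\}} (\phi(\deg u) - \phi(\deg u - 1)) \geq 2 + \frac{4}{3} + 3 \cdot \frac{2}{3} = \frac{16}{3}$, yielding the same contradiction. Hence $G$ has minimum degree $4$, and thus is $4$-regular.

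For the $K_4$-free part, suppose $K_4 \subseteq G$ on $\{x, y, z, w\}$. By $4$-regularity each of these has a unique outside neighbor $x', y', z', w'$ (possibly coinciding). I plan to remove $S := \{x, y, z, w\}$ entirely. There are exactly $4$ external edges leaving $S$, and each outside endpoint had degree $4$, so a short case analysis on how $x', y', z', w'$ coincide shows that $\lambda := \phi(G) - \phi(G - S) = 8 + \Delta \geq \frac{32}{3}$, with equality when the four outside neighbors are distinct (the degenerate case where all four coincide forces $G = K_5$, which directly satisfies the theorem). Let $k := s(G) - s(G - S)$; trivially $k \in \{1, 2, 3, 4\}$. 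Lemma~\ref{lem-include-exclude} applied to the critical edge $xy$ rules out $k = 4$: otherwise every minimum transversal would contain $V(K_4)$, forcing $\{y, z, w\} \subseteq S^*$ for the set $S^* \subseteq V(G) - \{x, y\}$ guaranteed by the lemma, which is impossible. For $k = 1$, note that $G - S$ has at most $4$ components (each meets some outside neighbor), so applying Theorem~\ref{prop-phi-2-fvs-connected}(\ref{prop-phi-2-fvs-connected-1st-part}) componentwise gives $\phi(G) \geq \frac{16}{3} s(G) + \frac{16}{3} - c \geq \frac{16}{3} s(G) - 1$, a contradiction.

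The expected main obstacle is ruling out the cases $k \in \{2, 3\}$, where the bound $\lambda \geq \frac{32}{3}$ falls short of the needed $\frac{16k}{3}$. I plan to rule these out by combining Lemma~\ref{lem-include-exclude} simultaneously on several critical edges inside $K_4$ (which forces a rigid pattern for the intersections of minimum transversals with $V(K_4)$, since every minimum transversal meets $V(K_4)$ in exactly $k$ vertices) with a careful use of Theorem~\ref{prop-phi-2-fvs-connected}(\ref{prop-phi-2-fvs-connected-2nd-part}) on components of $G - S$ that turn out to be reduced but not $3$-connected. This rigidity should either allow constructing a transversal of $G$ of size less than $s(G)$ or produce a non-critical edge in an auxiliary subgraph, closing the argument.
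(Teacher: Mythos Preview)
Your degree-$3$ argument is correct and matches the paper's.

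Your $K_4$ argument, however, has real gaps. First, the claim that $k=4$ forces every minimum transversal to contain $V(K_4)$ is false: from $s(G-S)=s(G)-4$ you only get that \emph{some} minimum transversal contains all of $S$ (namely $T'\cup S$ for any minimum transversal $T'$ of $G-S$), not that all of them do. The general fact is $|T\cap S|\le k$ for every minimum transversal $T$, which is vacuous when $k=|S|=4$. (In fact $k\le 3$ does hold, but for a different reason: after deleting any three vertices of the $K_4$ the fourth has degree~$1$, so Lemma~\ref{lemma-2fvs} gives $s(G-S)=s(G-\{y,z,w\})\ge s(G)-3$.) Second, you misidentify $k=2$ as an obstacle: since each $K_4$ vertex has exactly one edge leaving $S$ and $G$ is $3$-connected, $G-S$ is connected, and then $\lambda\ge\tfrac{32}{3}$ already yields $\phi(G)\ge\tfrac{16}{3}s(G)-1$ when $k=2$. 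The only genuine obstacle in your scheme is $k=3$, where $\lambda\ge\tfrac{32}{3}$ falls short of the required $16$, and the sketched plan (Lemma~\ref{lem-include-exclude} on several edges combined with part~(\ref{prop-phi-2-fvs-connected-2nd-part})) is far too vague to count as a proof; I do not see how to make it work.

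The paper avoids all of this by deleting a different triple. Pick $u\in X$ (the $K_4$), let $v$ be its unique outside neighbour, and take $w\in X$ not adjacent to $v$ (such $w$ exists since $G\neq K_5$). In $G-\{v,w\}$ the vertex $u$ has degree~$2$ and its two remaining neighbours---the other two $K_4$ vertices---are adjacent, so by Lemma~\ref{lemma-2fvs} one has $s(G-\{u,v,w\})=s(G-\{v,w\})\ge s(G)-2$; moreover $G-\{u,v,w\}$ is connected. The potential drop is at least $3\phi(4)+2(\phi(4)-\phi(2))+(\phi(4)-\phi(3))=\tfrac{32}{3}=2\cdot\tfrac{16}{3}$, and the argument closes in one line.
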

\begin{proof}
First, suppose that $G$ contains some vertex $v$ of degree 4 having a degree-3 neighbor.
Since $G-v$ is connected, 
\begin{align*}
\phi(G) & \geq \phi(G- v) + \phi(4) + 3\big(\phi(4) - \phi(3)\big) + \big(\phi(3) - \phi(2)\big)\\
& = \phi(G- v) + \frac{16}{3}
\geq \frac{16}{3}s(G) - 1,
\end{align*}
a contradiction.
Thus, $G$ is either cubic ($3$-regular) or $4$-regular. If $G$ is cubic, then, letting
$v\in V(G)$ be an arbitrary vertex of $G$, 
$$
\phi(G) \geq \phi(G- v) + \phi(3) + 3\big(\phi(3) - \phi(2)\big)
= \phi(G- v) + \frac{16}{3}
\geq \frac{16}{3}s(G) - 1,
$$
again a contradiction.
Hence, $G$ is $4$-regular.
Also, $\verts{G} \geq 6$, since $K_{5}$ is not a counter-example.

Now, assume $X \subset V(G)$ induces a subgraph of $G$ isomorphic to $K_{4}$. Let $u \in X$ and
let $v$ be the unique neighbor of $u$ in $V(G)-X$. Since $G \not \cong K_{5}$, there
is some vertex $w \in X$ that is not adjacent to $v$.

The two neighbors $x$ and $y$ of $u$ in $G - \{v,w\}$ are adjacent, hence
$$
s(G - \{u,v,w\}) = s(G - \{v,w\}) \geq s(G) - 2.
$$
Let $z$ be the unique neighbor of $w$ in $V(G) - (X \cup \{v\})$.
The vertices $x$ and $y$ have degree at most $2$ in $G - \{u,v,w\}$, and
$z$ has degree at most $3$ in that graph.
Combining these observations with the fact that $G - \{u,v,w\}$ is connected (since $G - \{v,w\}$ is), we deduce
\begin{align*}
\phi(G) & \geq \phi(G - \{u,v,w\}) +
3\phi(4) + 2\big(\phi(4) - \phi(2)\big) + \big(\phi(4) - \phi(3)\big) \\
& =\phi(G - \{u,v,w\}) + 2\frac{16}{3}
\geq \frac{16}{3}s(G) - 1,
\end{align*}
a contradiction.
Therefore, $G$ contains no subgraph isomorphic to $K_4$.
\end{proof}

We are now in a position to complete the proof of Theorem~\ref{prop-phi-2-fvs-connected}
by showing that $\phi(G) \geq \frac{16}{3} s(G) -1$, and thus that
$G$ is not a counter-example to
part (\ref{prop-phi-2-fvs-connected-1st-part}) of Theorem~\ref{prop-phi-2-fvs-connected}, a final contradiction.

\begin{lemma}
$\phi(G) \geq \frac{16}{3} s(G) -1$.
\end{lemma}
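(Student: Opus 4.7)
The plan is to construct an explicit subset $S\subseteq V(G)$ whose removal decreases the potential $\phi$ by at least $\tfrac{16}{3}|S|$, and then apply Theorem~\ref{prop-phi-2-fvs-connected} inductively to $G-S$ to derive the desired bound. Since by the previous lemma $G$ is $4$-regular with no $K_{4}$ subgraph, Lemma~\ref{lem-cycle} supplies an almost induced shortest even cycle $C$ of length $2k$; enumerate its vertices as $v_{1},\dots,v_{2k}$ so that $S:=\{v_{1},v_{3},\dots,v_{2k-1}\}$ is a stable set of $G$. For $i\in\{1,\dots,k\}$ set $S_{i}:=\{v_{1},v_{3},\dots,v_{2i-1}\}$ and let $X_{i}$ denote the set of vertices in $V(G)\setminus V(C)$ that have a neighbor in $S_{i}$. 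Each vertex of $S$ has exactly two cycle-neighbors and hence exactly two off-cycle neighbors, so $\sum_{u\in X_{i}}|N(u)\cap S_{i}|=2i$ and $|X_{i}|\leq 2i$.

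Mimicking the strategy of Lemma~\ref{lemma-no-deg-5}, I will let $j$ be the largest index for which $G-S_{j}$ is $3$-connected and $|X_{j}|=2j$. If $j=k$, then every vertex of $V(C)\setminus S$ has both cycle-neighbors in $S$ (contributing $\phi(4)-\phi(2)=2$ apiece) and every off-cycle neighbor of $S$ has exactly one neighbor in $S$ (contributing $\phi(4)-\phi(3)=\tfrac{2}{3}$ apiece), so a direct calculation gives
\[
\phi(G)-\phi(G-S)\;=\;k\cdot\phi(4)\;+\;k\cdot 2\;+\;2k\cdot\tfrac{2}{3}\;=\;\tfrac{16k}{3}.
\]
Combined with $s(G-S)\geq s(G)-k$ and part~(\ref{prop-phi-2-fvs-connected-1st-part}) of Theorem~\ref{prop-phi-2-fvs-connected} applied to the connected graph $G-S$ (connectedness being guaranteed by the $3$-connectedness of $G-S$), this yields $\phi(G)\geq\tfrac{16}{3}s(G)-1$, contradicting our counter-example assumption.

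If $j<k$, then the maximality of $j$ forces one of two failures at step $j+1$. In the first, $|X_{j+1}|<2(j+1)$: some off-cycle neighbor of $v_{2j+1}$ already belonged to $X_{j}$ and is promoted from multiplicity $t=1$ to $t\geq 2$, giving an extra $\tfrac{2}{3}$ beyond the base drop $\phi(G)-\phi(G-S_{j+1})\geq \tfrac{16(j+1)}{3}-\tfrac{2}{3}$ that one computes from a clean $X_{j+1}$, and bringing the total to at least $\tfrac{16(j+1)}{3}$; then part~(\ref{prop-phi-2-fvs-connected-1st-part}) applied to the still-connected graph $G-S_{j+1}$ concludes as before. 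In the second failure, $|X_{j+1}|=2(j+1)$ but $G-S_{j+1}$ is not $3$-connected; only the base drop of $\tfrac{16(j+1)}{3}-\tfrac{2}{3}$ is available, so part~(\ref{prop-phi-2-fvs-connected-1st-part}) alone falls short by $\tfrac{2}{3}$. To recover, I will reduce $G-S_{j+1}$ using Lemma~\ref{lemma-2fvs} to a graph $H$ with $s(H)=s(G-S_{j+1})$ and $\phi(H)\leq \phi(G-S_{j+1})$, use the minimality of $G$ to argue that $H$ is critical, verify that a $2$-cutset of $G-S_{j+1}$ persists in $H$ so that $H$ is reduced but not $3$-connected, and apply part~(\ref{prop-phi-2-fvs-connected-2nd-part}) of Theorem~\ref{prop-phi-2-fvs-connected} to $H$ to obtain $\phi(H)\geq \tfrac{16}{3}s(H)$; this stronger bound precisely covers the missing $\tfrac{2}{3}$.

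The main obstacle will be this last sub-case. In Lemma~\ref{lemma-no-deg-5} the analogous graph $G-S_{j+1}$ automatically had minimum degree at least $3$ since $G$ had maximum degree $5$, so no intermediate reduction was needed. Here, in contrast, the cycle vertices $v_{2},v_{4},\dots,v_{2j}$ of $V(C)\setminus S_{j+1}$ drop to degree exactly $2$ in $G-S_{j+1}$ and must be eliminated via Lemma~\ref{lemma-2fvs} before part~(\ref{prop-phi-2-fvs-connected-2nd-part}) can be applied. Carefully tracking the potential through these degree-$\leq 2$ reductions, and verifying that $H$ inherits both criticality (via minimality of $G$) and a $2$-cutset of $G-S_{j+1}$, will be the delicate technical point that ultimately delivers the final contradiction.
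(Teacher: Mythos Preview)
Your approach mimics Lemma~\ref{lemma-no-deg-5} too literally, and this is where it breaks down. You define $j$ as the largest index with $G-S_j$ $3$-connected and $|X_j|=2j$. But in the $4$-regular setting, for every $j\geq 2$ the vertices $v_2,\dots,v_{2j-2}$ have degree exactly~$2$ in $G-S_j$ (each lost both of its cycle neighbours), so the two off-cycle neighbours of any such $v_{2i}$ form a $2$-cutset separating $v_{2i}$ from the rest. Hence $G-S_j$ is \emph{never} $3$-connected once $j\geq 2$, and your index $j$ is forced to be at most~$1$. For $k\geq 2$ you are then immediately in the ``$j<k$, $G-S_{j+1}$ not $3$-connected'' sub-case --- but for the trivial reason that $v_2$ has degree~$2$, not because of any genuine separation. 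After you eliminate $v_2$ via Lemma~\ref{lemma-2fvs}, that trivial $2$-cutset evaporates and there is no reason whatsoever for the reduced graph $H$ to fail $3$-connectedness, so you cannot invoke part~(\ref{prop-phi-2-fvs-connected-2nd-part}). (In Lemma~\ref{lemma-no-deg-5} this issue did not arise: there the cycle vertices had degree~$5$ and dropped only to degree~$3$ in $G-S_j$, so $3$-connectedness of $G-S_j$ was a meaningful condition.) Even setting this aside, you do not verify that $G-S_1$ itself is $3$-connected --- Lemma~\ref{lemma-3-cutsets} only excludes degree-$5$ vertices from $3$-cutsets, so in the $4$-regular case nothing prevents $v_1$ from lying in one.

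The paper resolves this by changing the definition of $j$: it asks only that $G-S_j$ be $2$-connected (automatic for $j=1$) and that none of $v_2,\dots,v_{2j-2}$ lie in a triangle of $G-S_j$. In the critical sub-case the paper works with $G-S_j$ rather than $G-S_{j+1}$: since the degree-$2$ vertices $v_2,\dots,v_{2j-2}$ are not in triangles, contracting the explicit matching $\{v_{2i}x_i\}$ preserves $\phi$ exactly, creates no new low-degree vertices, and yields a graph $G'$ in which $\{v_{2j+1},z\}$ is an explicit $2$-cutset (with $z$ a cutvertex of $G-S_{j+1}$), so part~(\ref{prop-phi-2-fvs-connected-2nd-part}) applies. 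The paper also treats $|C|=4$ separately and uses the \emph{shortest}-even-cycle property to guarantee that off-cycle neighbours of $S$ are all distinct when $|C|\geq 6$, which makes your condition $|X_j|=2j$ automatic and removes the need for your first failure sub-case.
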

\begin{proof}
The proof is similar to that of Lemma~\ref{lemma-no-deg-5}. The main difference is that, in the latter proof,
a vertex $v$ of the cycle $C$ with two neighbors in the stable set $S_i$ has degree $3$ in $G-S_i$, while here
it will have degree $2$ in $G-S_i$. In some cases, we will need to eliminate these degree-$2$ vertices
using the relevant operations (cf.\ Lemma~\ref{lemma-2fvs}).
A second difference is that here we are able to choose $C$ simply as a {\em shortest} even cycle in $G$,
which will ensure that no two vertices in $S$ have a common neighbor outside $V(C)$ when $|C|\geq 6$, simplifying somewhat the case analysis. (We could not have done that in the proof of Lemma~\ref{lemma-no-deg-5} because the graph might have contained $K_4$ as a subgraph.)

Let  $C$ be a shortest even cycle in $G$. Since $G$ is $4$-regular and has no subgraph isomorphic to $K_4$,
by Lemma~\ref{lem-cycle} such a cycle exists, and it is almost induced.
Enumerate the vertices of
$C$ in order as $v_{1}, v_{2}, \ldots, v_{2k}$ so that $S:=\{v_{1}, v_{3}, \dots, v_{2k-1}\}$
is a stable set of $G$. We may further assume that if $C$ is not induced, then the unique chord
of $C$ is incident to $v_{2k}$.

First suppose that $|C|=4$. Then $G-S$ is connected. We have $s(G-S) \geq s(G) -2$, and
$$
\phi(G) \geq \phi(G-S) + 4\phi(4) + 4(\phi(4) - \phi(3))
= \phi(G-S) + 10 + \frac23
\geq \frac{16}{3} s(G) -1.
$$

Next, assume $|C| \geq 6$.
Here we cannot simply remove $S$ from $G$, since $G-S$ might no longer be connected.
For each $i\in \{1, 2, \dots, k\}$,
let $S_{i}:=\{v_{1}, v_{3}, \dots, v_{2i-1}\}$,
let $x_i$ a neighbor of $v_{2i}$ outside $V(C)$, and let
$M_i := \{v_2x_1, v_4x_2, \dots, v_{2i}x_i\}$. Observe that no two vertices of $C$ which are at even distance on $C$
share a common neighbor outside $C$, for otherwise there would be an even cycle shorter than $C$.
(Here we use that $|C| \geq 6$.)
Thus, no two vertices in $S$ have a common neighbor outside $V(C)$, and
no two vertices in $V(C)-S$ have a common neighbor outside $V(C)$.
In particular, $M_i$ is a matching of $G$ for each $i\in \{1, 2, \dots, k\}$.

Let $j$ be the largest index in $\{1, 2, \dots, k\}$ such that
\begin{itemize}
\item $G-S_{j}$ is $2$-connected, and
\item none of $v_2, v_4, \dots, v_{2j-2}$ lies in a triangle in $G-S_j$.
\end{itemize}
Since $G-S_{1}$ is $2$-connected and the second condition is vacuous for $j=1$, the two properties hold for $j=1$,
and thus the index $j$ is well defined.
We distinguish two cases, depending on whether $j=k$ or $j < k$.

{\bf Case 1.} $j=k$: We have $s(G - S) \geq s(G) - k$ and
\begin{align*}
\phi(G) & = \phi(G - S) +
k\phi(4) + k\big(\phi(4) - \phi(2)\big) + 2k\big(\phi(4) - \phi(3)\big)\\
& = \phi(G - S) + k\frac{16}{3}
\geq \frac{16}{3}s(G - S) - 1 + k\frac{16}{3}
\geq \frac{16}{3}s(G) - 1,
\end{align*}
as desired.

{\bf Case 2.} $j<k$: Here we know that $G- S_{j+1}$ is connected (but perhaps not $2$-connected).

First suppose that $v_{2j}$ is in a triangle of $G- S_{j+1}$, and let $G' := G - (S_{j+1} \cup \{v_{2j}\})$.
Then $s(G') = s(G - S_{j+1}) \geq s(G) - (j+1)$. Also, $x_j$ has degree at least $3$ in
$G- S_{j+1}$, since each vertex of $V(G) -V(C)$ sees at most one vertex from $S$ in $G$.
(This is also true for the other neighbor of $v_{2j}$ in $G- S_{j+1}$ {\em if} it is outside $V(C)$; note however that this second neighbor
could be the vertex $v_{2k}$ in case $C$ has a chord.)
Since $\phi(3) - \phi(2) \geq \phi(4) - \phi(3)$, considering the vertex $x_j$ we deduce
that $\phi(G - S_{j+1}) \geq \phi(G') + (\phi(4) - \phi(3)) = \phi(G') + \frac23$.

Combining the previous observations, 
\begin{align*}
\phi(G) & = \phi(G - S_{j+1}) + (j+1)\phi(4) + j(\phi(4) - \phi(2))
+ 2(j+1)(\phi(4) - \phi(3)) + 2(\phi(4) - \phi(3))\\
& = \phi(G - S_{j+1}) + (j+1)\frac{16}{3} - \frac23\\
& \geq \phi(G') + (j+1)\frac{16}{3}
\geq \frac{16}{3}(s(G) - (j+1)) - 1 + (j+1)\frac{16}{3}
= \frac{16}{3}(s(G) - 1).
\end{align*}
Hence we may assume that $v_{2j}$ is not in a triangle in $G- S_{j+1}$, and it follows that the latter graph
is connected but not $2$-connected.

Let $z$ be a cutvertex of $G- S_{j+1}$. We may assume that $z$ has been chosen so that it is distinct from
$v_2, v_4, \dots, v_{2j}$ (if not, simply replace $z$ by one of its two neighbors).
Now we will focus on the graph $G- S_{j}$. Clearly, $\{v_{2j+1}, z\}$ is a $2$-cutset of that graph.
More importantly, $\{v_{2j+1}, z\}$ is also a $2$-cutset of $G' := (G - S_j) / M_j$, the graph obtained from
$G - S_j$ by contracting each edge of the matching $M_j$. Thus $G'$ is not $3$-connected.
On the other hand, $G'$ is $2$-connected, since any cutvertex of $G'$ would also be a cutvertex of
$G - S_j$ (which is $2$-connected).

We claim that $v_2, v_4, \dots, v_{2j}$ are the only vertices of degree $2$ in $G- S_{j}$, and that every other vertex has degree
at least $3$.  This is clear if none of $v_2, v_4, \dots, v_{2j}$ is incident to a chord of $C$, since no
vertex from $V(G)-V(C)$ sees two vertices from $S$ in $G$.
If, on the other hand, there is a chord of $C$ incident to one of these vertices, then it is of the form
$v_{2\ell}v_{2k}$ for some $\ell \in \{1, 2, \dots, j\}$, and we observe that $v_{2k}$ has degree $3$ in
$G- S_{j}$ since $j < k$. Again this shows that $v_2, v_4, \dots, v_{2j}$ are the only vertices of degree $2$ in $G- S_{j}$.

By the previous observation, it follows that $G'$ has minimum degree $3$.
If some edge $e$ of $G'$ is not critical then, since $G' - e$ is connected and $s(G'-e) = s(G') = s(G- S_j)$,
\begin{align*}
\phi(G) & = \phi(G - S_{j}) + j\phi(4) + (j-1)(\phi(4) - \phi(2))
  + 2j(\phi(4) - \phi(3)) + 2(\phi(4) - \phi(3))\\
& = \phi(G - S_{j}) + j\frac{16}{3} - \frac23\\
& = \phi(G') + j\frac{16}{3} - \frac23\\
& \geq \phi(G' - e) + j\frac{16}{3}
\geq \frac{16}{3}(s(G) - j) - 1 + j\frac{16}{3}
= \frac{16}{3}(s(G) - 1).
\end{align*}
Thus we may assume that $G'$ is critical, implying that $G'$ is reduced. Since $G'$ is not $3$-connected, applying
part (\ref{prop-phi-2-fvs-connected-2nd-part}) of Theorem~\ref{prop-phi-2-fvs-connected} on $G'$ yields
\begin{align*}
\phi(G) & = \phi(G - S_{j}) + j\phi(4) + (j-1)(\phi(4) - \phi(2))
+ 2j(\phi(4) - \phi(3)) + 2(\phi(4) - \phi(3))\\
& = \phi(G - S_{j}) + j\frac{16}{3} - \frac23\\
& = \phi(G') + j\frac{16}{3} - \frac23\\
& \geq \frac{16}{3}s(G') + j\frac{16}{3}  - \frac23
\geq \frac{16}{3}(s(G) - j) + j\frac{16}{3}  - \frac23
\geq \frac{16}{3}(s(G) - 1),
\end{align*}
which concludes the proof.
\end{proof}

\bibliographystyle{abbrv}
\bibliography{bibliography}

\begin{thebibliography}{10}

\bibitem{Eppstein}
G.~{Borradaile}, D.~{Eppstein}, and P.~{Zhu}.
\newblock Planar induced subgraphs of sparse graphs.
\newblock {\em J. Graph Algorithms and Applications}, 19(1):281--297, 2015.

\bibitem{E13}
K.~Edwards.
\newblock A faster polynomial-space algorithm for {M}ax-2-{CSP}.
\newblock {\em J. Comput. System Sci.}, 82:536--550, 2016.

\bibitem{EF08}
K.~Edwards and G.~Farr.
\newblock Planarization and fragmentability of some classes of graphs.
\newblock {\em Discrete Math.}, 308(12):2396--2406, 2008.

\bibitem{EF12}
K.~Edwards and G.~Farr.
\newblock Improved upper bounds for planarization and series-parallelization of
  degree-bounded graphs.
\newblock {\em Electron. J. Combin.}, 19(2):Paper 25, 2012.

\bibitem{EM14}
K.~Edwards and E.~McDermid.
\newblock A general reduction theorem with applications to pathwidth and the
  complexity of {MAX}-2-{CSP}.
\newblock {\em Algorithmica}, 72(4):940--968, 2015.

\bibitem{GS12}
S.~Gaspers and G.~B. Sorkin.
\newblock A universally fastest algorithm for {M}ax 2-{S}at, {M}ax 2-{CSP}, and
  everything in between.
\newblock {\em J. Comput. System Sci.}, 78(1):305--335, 2012.

\bibitem{GS14}
S.~{Gaspers} and G.~B. {Sorkin}.
\newblock Separate, measure and conquer: Faster algorithms for {M}ax 2-{CSP}
  and counting dominating sets.
\newblock In {\em Proc. 42nd International Colloquium on Automata, Languages,
  and Programming (ICALP 2015)}, volume 9134 of {\em Lecture Notes Comput.
  Sci.}, pages 567--579, 2015.

\bibitem{GK14}
A.~Golovnev and K.~Kutzkov.
\newblock New exact algorithms for the 2-constraint satisfaction problem.
\newblock {\em Theoret. Comput. Sci.}, 526:18--27, 2014.

\bibitem{Kneis-WG05}
J.~Kneis, D.~M{\"o}lle, S.~Richter, and P.~Rossmanith.
\newblock Algorithms based on the treewidth of sparse graphs.
\newblock In {\em Proc. 31st International Workshop on Graph-Theoretic Concepts
  in Computer Science ({WG} 2005)}, volume 3787 of {\em Lecture Notes Comput.
  Sci.}, pages 385--396. Springer, 2005.

\bibitem{Kneis-SIDMA09}
J.~Kneis, D.~M{\"o}lle, S.~Richter, and P.~Rossmanith.
\newblock A bound on the pathwidth of sparse graphs with applications to exact
  algorithms.
\newblock {\em SIAM J. Discrete Math.}, 23(1):407--427, 2008/09.

\bibitem{Scott2003}
A.~D. Scott and G.~B. Sorkin.
\newblock Faster algorithms for {MAX CUT} and {MAX CSP}, with polynomial
  expected time for sparse instances.
\newblock In S.~Arora, K.~Jansen, J.~D.~P. Rolim, and A.~Sahai, editors, {\em
  Proc. 7th International Workshop on Randomization and Approximation
  Techniques in Computer Science (RANDOM 2003)}, volume 2764 of {\em Lecture
  Notes Comput. Sci.}, pages 382--395. Springer, 2003.

\bibitem{SS07}
A.~D. Scott and G.~B. Sorkin.
\newblock Linear-programming design and analysis of fast algorithms for {M}ax
  2-{CSP}.
\newblock {\em Discrete Optim.}, 4(3-4):260--287, 2007.

\end{thebibliography}

\end{document}